\numberwithin{equation}{section}
\numberwithin{figure}{section}
\theoremstyle{plain}
\newtheorem{thm}{Theorem}[section]
\newtheorem{prop}[thm]{Proposition}
\newtheorem{lem}[thm]{Lemma}
\newtheorem{rem}[thm]{Remark}
  \newcounter{casectr}
\theoremstyle{definition}
\theoremstyle{remark}
\newcommand{\ZZZ}{\mathbb{Z}}
\newcommand{\TTT}{\mathbb{T}}
\newcommand{\N}{\mathbb{N}}
\newcommand{\NN}{\mathcal{N}}
\newcommand{\NNN}{P_{\leq N}\mathcal{N}}
\newcommand{\RRR}{\mathbb{R}}
\newcommand{\PPP}{\mathbb{P}}
\newcommand{\EEE}{\mathbb{E}}
\begin{document}

\title[Random data]{2D-Defocusing Nonlinear Schr\"odinger Equation with \\Random Data on irrational tori}

\author[C. Fan]{Chenjie Fan$^1$}
\address{
Department of Mathematics, University of Chicago, Chicago, IL}
\email{cjfanpku@gmail.com}

\thanks{$^1$ C.F. is  funded in part by   an AMS-Simons Foundation travel grant.}

\author[Y. Ou]{Yumeng Ou$^2$}
\address{
Department of Mathematics, Baruch College, City University of New York, New York, NY}
\email{yumeng.ou@baruch.cuny.edu}
\thanks{$^2$  Y.O. is funded in part by NSF DMS-1854148.}

\author[G. Staffilani]{Gigliola Staffilani$^3$}
\address{Department of Mathematics, Massachusetts Institute of Technology, Cambridge, MA}
\email{gigliola@math.mit.edu} 

\thanks{$^3$  G.S. is funded in part by NSF DMS-1462401 and DMS-1764403, and the Simons Foundation. }

\author[H. Wang]{Hong Wang$^4$}
\address{Institute for Advanced Study, Princeton, New Jersey}
\email{hongwang@ias.edu}
\thanks{$^4$ H.W. is funded by the S.S. Chern Foundation for Mathematics Research Fund and the National Science Foundation.}

\begin{abstract} We revisit the work of Bourgain on the invariance of the Gibbs measure for the cubic, defocusing nonlinear Schr\"odinger equation in 2D on a square torus, and we prove the equivalent result on any tori.
\end{abstract}

\maketitle

\section{introduction}
The purpose of this work is to revisit the famous work of Bourgain on the invariance of the Gibbs measure for the 2D defocusing cubic nonlinear Schr\"odinger equation (NLS) on a square torus $\TTT^2$, \cite{bourgain1996invariant}, and extend his proof to any torus. Since later we  often use the definition of {\it rational} or {\it irrational} torus, we readily give it here. Assume that a 2D torus  $\TTT^2$ has periods $T_1$ and $T_2$. If $T_1/T_2$ is rational we call  $\TTT^2$ a rational torus, otherwise irrational. As one can see from the proof for  Strichartz estimates on 
rational tori \cite{bourgain1993fourier}, Bourgain uses a fundamental property of the linear solution of the Schr\"odinger 
equation defined on a rational torus, the fact that the solution is also periodic in time. In the proof this fact is used in reducing the Strichartz inequality  to estimating the cardinality of a  set of   lattice points $(x,y)\in \ZZZ^2$ that satisfy a quadratic equation $x^2+ay^2=R^2$, where $a, R$ are natural numbers. Then  an elementary theorem  from number theory  is invoked to give a sharp bound in terms of $R$. If one wants to repeat Bourgain's proof for generic tori one has to obtain the same sharp bound when counting the 
lattice points in a region  such as $\{(x,y)\, \, /\, \, x^2+\alpha y^2\leq R^2+O(1)\}$, where now $\gamma, R^2>0$. In general,  this number of lattice points is larger than the sharp bound above. Intuitively though the same type of Strichartz estimates available for rational tori should be available for irrational  one. In fact one expects even better ones since the irrationality of the torus should translate into fewer interactions among  linear Schr\"odinger solutions. After almost two decades the full range of Strichartz estimates on any torus were proved  by Bourgain and Demeter \cite{bourgain2014proof}, who obtained them  as a corollary of their proof of the {\it $l^2$ decoupling conjecture}, hence without invoking any number theory. Shortly later Deng, Germain and Guth \cite{DGG2017}  proved that indeed Strichartz estimates on a {\it generic}  irrational torus are better, in the sense that they {\it live} for  a longer interval of time. 

Let us now go back to  Bourgain's work on the invariance of the Gibbs measure in \cite{bourgain1996invariant}. If one considers the   nonlinear Schr\"odinger equation with solution $u$ and  Hamiltonian $H(u)$ in the frequency space instead of the physical space, then it can be recast as an infinite dimensional Hamiltonian system with variables $(q_n(t),p_n(t))$ such that $\hat u(t,n)=q_n(t)+ip_n(t)$ for frequencies  $n\in\ZZZ^2$. For this infinite dimensional system one can define a Gibbs measure that formally can be written as $d\mu=1/Ze^{H(u)}du$, where $Z$ is a normalizing factor to make it a probability measure, and its support lives  in $H^{s}(\TTT^2),\, s<0$,  see \cite{LRS1988, GJ1989}. 
Bourgain had already proved \cite{bourgain1994periodic} that for the 1D quintic NLS, where a similar measure can be defined with support in $H^{s}(\TTT),\, s<1/2$  \cite{LRS1988}, the Schr\"odinger flow  indeed leaves the measure invariant, meaning for any set $A$ in the support of the measure, its evolution  with respect to the Schr\"odinger flow at any later times has the same measure as $A$ itself.  Moreover using this invariance he proved that the flow can be defined globally almost surely.
Clearly such a question could have  been asked in 2D for the cubic defocusing NLS\footnote{It is known that while for the 1D quintic focusing NLS the Gibbs measure can be defined as long as the $L^2$ norm is smaller than a certain absolute constant, in 2D no Gibbs measure can be defined for the focusing case \cite{LRS1988}.} as well. The issue that Bourgain faced was that while in the 1D case the flow was (deterministically) defined, at least locally,  for any data in $H^s(\TTT),\, s>0$, and hence on the whole support of the Gibbs measure, which as recalled is in $H^{s}, \, s<1/2$, for the cubic 2D case  also the flow was  only known to be  defined for data in $H^s(\TTT^2),\, s>0$, hence missing  the support of the Gibbs measure, which is in $H^{s}, \, s<0$. To overcome this, and other serious analytic obstacles along the way, Bourgain used probabilistic tools, such as Wick ordering and  large deviation estimates, combined with more deterministic ones, such as Strichartz type estimates and counting lemmata 
similar to the ones recalled above. This brings us to the motivation of our paper.  Indeed here we   prove new counting lemmata, see Section \ref{sec: counting}, that hold more generally for any torus, and we  rework the local almost sure well-posedness in details\footnote{Here we will not repeat the argument that upgrades the local well-posedness  to the  global since the rationality or not of the torus plays no role.} to show that the bounds obtained in the counting lemmata, although weaker than the ones in \cite{bourgain1996invariant} still are enough to conclude the proof.  Although our paper follows the scheme of Bourgain's proof,   we decided  to add full details  for the convenience of the reader and because  along the way we  could  point out with remarks where special care needed to be put in place in order to treat the general case and rationality cannot be invoked.

Finally, we recently learned that  Deng, Nahmod and Yue \cite{dny2019} have extended Bourgain's result in \cite{bourgain1996invariant} for any nonlinearity $2r+1$, where $r\in \N$. This  is a remarkable feat since the high order of nonlinear interactions was previously considered an almost insurmountable obstacle in obtaining an almost sure  local flow in the support of the Gibbs measure.

\subsection{Statement of main result}
 In this paper, we study the 2D cubic Wick ordered NLS equation on irrational or rational tori.
 We will pose the NLS on a rectangular torus and rescale the $\Delta$. Let $\gamma\in (1,2)$ be any real number (possibly irrational) that  determines on $\mathbb{T}^2=[0,2\pi]^2$ the operator
\[
\Delta_{\gamma}:= \partial_{x}^{2}+\frac{1}{\gamma}\partial_{y}^{2}.
\]
The free solution to the linear Schr\"odinger  initial value problem
\begin{equation}
\begin{cases}
iu_{t}-\Delta_{\gamma}u=0,\\
u_{0}=\sum_{n\in \ZZZ^{2} }a_{n}e^{i n\cdot x}
\end{cases}
\end{equation}
is of the form
\[
S(t)u_{0}\equiv e^{it\Delta_{\gamma}}u_{0}\equiv\sum_{n\in \ZZZ^{2}}a_{n}e^{i n\cdot x}e^{i n^{2}t},
\]
where we let
\[
n^{2}:=n\cdot n=\langle n,n\rangle_{\gamma}=n_{1}^{2}+\gamma n_{2}^{2}.
\]
Following the set up of Bourgain \cite{bourgain1996invariant} we revisit the Wick ordered truncated NLS with random initial data
\begin{equation}\label{eq: wnls}
\begin{cases}
i\partial_{t}u_{N}+\Delta_{\gamma}u_{N}=P_{\leq N}[(|u_{N}|^{2}-M_{N})u_{N}],\\
u_{0,N}=\sum_{|n|\leq N}\frac{g_{n}(\omega)}{|n|}e^{i n \cdot x}, \, x\in \TTT^{2},
\end{cases}
\end{equation}where $\{g_n(\omega):\, n\in\mathbb{Z}^2\}$ are independent $L^2$-normalized complex Gaussians and 
\[
M_{N}:=2\int_{\TTT^{2}}|u_{N}(t,x)|^2\,dx=2\int_{\TTT^{2}}|u_{0,N}|^{2}.
\]
The operator $P_{\leq N}$ here denotes the projection  onto frequencies  $|n|\leq N$.
By definition of $M_N$, one may rewrite \eqref{eq: wnls} as 
 \begin{equation}\label{eq: wnlsworking}
\begin{cases}
i\partial_{t}u_{N}-\Delta_{\gamma}u_{N}=P_{\leq N}\NN(u_{N}),\\
u_{0,N}=\sum_{|n|\leq N}\frac{g_{n}(\omega)}{|n|}e^{i n \cdot x}, \,x\in \TTT^{2}
\end{cases}
\end{equation}by defining the Wick ordered nonlinearity
\begin{equation}\label{defnN}
\begin{split}
\NN(f,g,h)(x)&=\sum_{n_{i}\in \ZZZ^{2},\, n_{2}\neq n_{1},n_{3}}\widehat{f}(n_{1})\overline{\widehat{g}(n_{2})}\widehat{h}(n_{3})e^{i\langle n_{1}-n_{2}+n_{3},x\rangle}-\sum_{n\in \ZZZ^{2}}\widehat{f}(n)\overline{\widehat{g}(n)}\widehat{h}(n)e^{in\cdot x}\\
&=:\NN_1(f,g,h)+\NN_2(f,g,h), 
\end{split}
\end{equation}
and we write 
$
\NN(f):=\NN(f,f,f),$ similarly for $\NN_i,\, i=1,2$.

One may also study the formal limit equation of \eqref{eq: wnlsworking}
\begin{equation}\label{eq: limit}
\begin{cases}
i\partial_{t}u-\Delta_{\gamma}u=\NN(u),\\
u_{0}=\sum_{n}\frac{g_{n}(\omega)}{|n|}e^{i n\cdot x}, \,x\in \TTT^{2}.
\end{cases}
\end{equation}
For every $\omega$ and every $N$ fixed, equation \eqref{eq: wnlsworking} is finite dimensional and thus  an ODE. It hence has a local solution, and in fact also a global solution due to the mass conservation law. Therefore, one is mainly interested in a local theory for \eqref{eq: wnlsworking} that is independent of $N$. More precisely, the main result of the paper is the following
\begin{thm}\label{thm: main}
Let $u_{N}^{\omega}$ be the solution to \eqref{eq: wnlsworking},

\begin{equation}\label{eq: duhamelmain}
u_{N}^{\omega}(t,x)=e^{it\Delta_{\gamma}}u_{0,N}+i\int_{0}^{t}e^{i(t-s)\Delta_{\gamma}}\NNN(u_{N})\,ds, \quad\forall t<t_{\omega}.
\end{equation}
There exists $s_0>0,\epsilon_{0}>0$, so that for almost every $\omega\in \Omega$, there exists a $t_{\omega}$ independent of $N$, such that
\begin{equation}
\|u_{N}^{\omega}(t,x)-e^{it\Delta_{\gamma}}u_{0,N}\|_{X^{s_{0},\frac{1}{2}+\epsilon_{0}}}\lesssim 1.
\end{equation}
Moreover, for any $0<s'<s_0$, $w_{N}^{\omega}:=u_{N}^{\omega}(t,x)-e^{it\Delta_{\gamma}}u_{0,N}$ converges strongly in $ X^{s',\frac{1}{2}+\epsilon_{0}}[0,t_{\omega}]$ to some limit $w$. Furthermore, the limit $u^\omega:=w+e^{i\Delta t}u_{0}$ (called the solution to the Wick ordered NLS \eqref{eq: limit}) satisfies the Duhamel formula
\begin{equation}\label{eq: duhamelmain2}
u^{\omega}(t,x)=e^{it\Delta_{\gamma}}u_{0}+i\int_{0}^{t}e^{i(t-s)\Delta_{\gamma}}\NN(u^\omega)\,ds, \quad \forall t<t_{\omega}.
\end{equation}
\end{thm}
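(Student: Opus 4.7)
Writing $w_{N}=u_{N}^{\omega}-e^{it\Delta_{\gamma}}u_{0,N}$ and $v_{N}(t):=e^{it\Delta_{\gamma}}u_{0,N}$, equation \eqref{eq: duhamelmain} becomes the fixed point problem
\[
w_{N}=\Phi_{N}[w_{N}]:=i\int_{0}^{t}e^{i(t-s)\Delta_{\gamma}}P_{\leq N}\NN(v_{N}+w_{N})\,ds.
\]
After localizing in time with a smooth cutoff at scale $t_{\omega}\ll 1$, I would run a Banach fixed point argument for $\Phi_{N}$ on a ball of radius $O(1)$ in $X^{s_{0},\frac{1}{2}+\epsilon_{0}}$ with $s_{0},\epsilon_{0}>0$ to be chosen small (with $s_{0}$ constrained by the $N_{2}^{100s_{1}}$ loss in the counting lemmata of Section \ref{sec: counting}). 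Trilinearity expands $\NN(v_{N}+w_{N})$ into eight pieces $\NN(a_{1},a_{2},a_{3})$ with each $a_{i}\in\{v_{N},w_{N}\}$, each of which must be bounded uniformly in $N$.

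\textbf{The fully random term.} The most dangerous piece is $\NN(v_{N},v_{N},v_{N})$, since $v_{N}$ only lies in $H^{s}$ for $s<0$ almost surely. I would estimate its Duhamel integral in $X^{s_{0},\frac{1}{2}+\epsilon_{0}}$ by duality, reducing the bound to a sum over frequency vectors $(n_{1},n_{2},n_{3})$ satisfying the Wick constraint $n_{2}\neq n_{1},n_{3}$ together with $n_{1}-n_{2}+n_{3}=n$. Using the independence of the $g_{n}(\omega)$ and Wiener chaos hypercontractivity, I reduce the problem to an $L^{2}(\Omega)$ second moment, which after Plancherel becomes a weighted count of lattice points on the paraboloid. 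Here I would invoke the irrational-tori counting lemmata of Section \ref{sec: counting} to handle sets of the form
\[
\big\{(n_{1},n_{2},n_{3}):\,|n_{i}|\sim N_{i},\ n_{1}-n_{2}+n_{3}=n,\ n_{1}^{2}-n_{2}^{2}+n_{3}^{2}=n^{2}+O(N_{2}^{100s_{1}})\big\},
\]
trading the lack of arithmetic divisor bounds for an acceptable power loss. A Borel--Cantelli / Chebyshev step on the resulting Gaussian polynomials upgrades this to an almost-sure bound with $t_{\omega}=t_{\omega}(\omega)$ independent of $N$.

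\textbf{Remaining terms and contraction.} For each of the mixed terms (at least one slot equal to $w_{N}$), I would prove trilinear $X^{s,b}$ estimates
\[
\|\NN(f_{1},f_{2},f_{3})\|_{X^{s_{0},-\frac{1}{2}+2\epsilon_{0}}}\lesssim t_{\omega}^{\theta}\prod_{i=1}^{3}\|f_{i}\|_{\sharp_{i}},
\]
where $\|\cdot\|_{\sharp_{i}}=\|\cdot\|_{X^{s_{0},\frac{1}{2}+\epsilon_{0}}}$ when $f_{i}=w_{N}$ and an appropriate random-evolution norm when $f_{i}=v_{N}$ (bounded probabilistically as above). The $t_{\omega}^{\theta}$ comes from the standard $X^{s,b}$ time-localization gain for $b>\frac{1}{2}$, and supplies the contraction factor. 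Combined with the almost-sure bound on $\Phi_{N}[0]$ from the previous step, this closes the fixed point argument and yields the claimed uniform-in-$N$ bound on $w_{N}$ in $X^{s_{0},\frac{1}{2}+\epsilon_{0}}[0,t_{\omega}]$.

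\textbf{Convergence and Duhamel identity.} For strong convergence, I would write the equation for $w_{N}-w_{M}$ with $N>M$ and apply the same multilinear estimates; each difference picks up a factor $\|P_{>M}v\|_{\sharp}+\|P_{>M}w\|_{X^{s_{0},\frac{1}{2}+\epsilon_{0}}}$, which is summably small in $M$ after conceding $\delta:=s_{0}-s'>0$ derivatives. This produces a Cauchy sequence in $X^{s',\frac{1}{2}+\epsilon_{0}}[0,t_{\omega}]$ with limit $w$, and passing $N\to\infty$ in \eqref{eq: duhamelmain} is then routine, since the trilinear form $\NN$ is continuous on the limiting space once the above estimates hold. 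The principal obstacle throughout is the weaker counting bound on irrational tori: the $N_{2}^{100s_{1}}$ slack in the lattice-point count forces careful bookkeeping of the regularity budget, and checking that despite this loss the contraction still closes for some $s_{0}>0$ is exactly where the paper's new counting lemmata are decisive.
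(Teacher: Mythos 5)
Your overall architecture --- Picard iteration for $w_N$ in $X^{s_0,\frac{1}{2}+\epsilon_0}$, duality, decomposition of $\NN(v_N+w_N)$ into the $2^3$ pieces according to which slots are random, counting lemmata plus large-deviation estimates for the random pieces, and a Cauchy-sequence argument for convergence after conceding $s_0-s'$ derivatives --- matches the paper's. But there is a genuine conceptual gap in where you locate the difficulty and, consequently, in how you propose to close the mixed terms. You single out $\NN(v_N,v_N,v_N)$ as ``the most dangerous piece'' and relegate all mixed terms to ``trilinear $X^{s,b}$ estimates'' with a $t_\omega^\theta$ gain, the random slot being ``bounded probabilistically as above,'' i.e.\ by a second-moment/hypercontractivity bound. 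This is backwards. The fully random cases (k), (l) are among the easiest, precisely because every factor carries an independent Gaussian and the Wiener-chaos cancellation is maximal. The hard cases, which occupy Sections \ref{sec: ProofPartII} and \ref{sec: ProofPartIII}, are those with a \emph{single} random factor at the highest or second-highest frequency and two deterministic factors, namely case (a) $N_1(II)\geq N_2(I)\geq N_3(II)$ and case (c) $N_1(I)\geq N_2(II)\geq N_3(II)$: there the prefactor $N_1^{s_0}$ must be beaten with no help from additional randomness, and a deterministic trilinear bound on the two $w$-factors does not close at regularity $s_0>0$ with data effectively at $H^{0-}$ --- this is exactly the obstruction that made the 2D problem hard in the first place.

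Concretely, your scheme is missing the mechanism by which those mixed cases are closed: one must first perform the resonant/non-resonant splitting $P_{|\tau-n^2|<M}$ versus $P_{|\tau-n^2|>M}$ (with $M$ a small power of the top frequency) to reduce to a purely resonant sum over $n_1^2-n_2^2+n_3^2-n^2=O(M)$; then, with the deterministic coefficients $d_i(n_i)$ only controlled in $\ell^2$, apply Cauchy--Schwarz in the output frequency and invoke the counting bounds of Lemma \ref{lem: circle} and Lemma \ref{lem: count strong} with the deterministic frequencies \emph{fixed}; and in case (c) one further needs the matrix Cauchy--Schwarz inequality \eqref{eq: m2}, estimating diagonal and off-diagonal sums of products of Gaussians, to decouple $d_2(n_2)$ from the random kernel. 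None of this follows from a second-moment bound on $v_N$ alone, and a norm $\|v_N\|_{\sharp}$ that could be fed uniformly into a trilinear estimate across the mixed cases does not exist at this regularity. A second, quantitative issue you would hit in the cases with the deterministic factor on top (e.g.\ case (e)) is that summing exceptional sets over the $\sim N_1^2/N_2^2$ frequency cubes $J$ of side $N_2$ destroys the probability bound unless one observes that the resonance conditions on different cubes collapse to $O(N_2^2)$ essentially distinct events; this bookkeeping is necessary, not optional.
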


We refer the reader to Subsection \ref{sec:Xsb} for the definition of the $X^{s,b}$ space and its related properties. In the following, when the dependence on the parameter $\omega\in\Omega$ is clear from the context, we sometimes will drop the superscript in $u^\omega$ and write $u$ for short.

 Theorem \ref{thm: main} follows from the following quantitative version (independent of $N$) of the main result.
\begin{thm}\label{thm: main2}
Let $u^{\omega}_{N}$ be as in Theorem \ref{thm: main},
there exists $s_0>0, \epsilon_{0}, \alpha_{0}, t_{0}>0$ so that for every $t<t_{0}$, up to a set of probability measure $e^{-t^{-\alpha_{0}}}$, one has that  $w_{N}:=u_{N}-e^{it\Delta}u_{0,N}$ satisfies  $\|w_{N}\|_{X^{s_0,\frac{1}{2}+\epsilon_{0}}[0,t]}\lesssim 1$. Furthermore, for all $0<s'<s_0$, $w_{N}$ converges to some limit $w$ in $X^{s',\frac{1}{2}+\epsilon_{0}}$ and $u=w+e^{it\Delta_\gamma} u_{0}$ solves the wick ordered NLS in the sense that Duhamel formula \eqref{eq: duhamelmain2} is satisfied.
\end{thm}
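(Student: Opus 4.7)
The plan is to establish Theorem \ref{thm: main2} by a probabilistic fixed-point argument in $X^{s_0,\frac{1}{2}+\epsilon_0}$ for the nonlinear remainder $w_N := u_N - e^{it\Delta_\gamma}u_{0,N}$. Setting $z_N := e^{it\Delta_\gamma}u_{0,N}$, the equation for $w_N$ is the Duhamel fixed point
\begin{equation*}
w_N \;=\; i\int_{0}^{t} e^{i(t-s)\Delta_\gamma}\,P_{\leq N}\,\NN(z_N + w_N)(s)\,ds \;=:\; \Phi_N(w_N),
\end{equation*}
and by trilinearity $\NN(z_N+w_N)$ splits into eight multilinear pieces $\NN(a_1,a_2,a_3)$ with each $a_i\in\{z_N,w_N\}$, each further split via $\NN = \NN_1 + \NN_2$ as in \eqref{defnN}.

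The goal is to prove, for some small $s_0,\epsilon_0,\theta>0$, a nonlinear estimate of the form
\begin{equation*}
\|\Phi_N(w)\|_{X^{s_0,\frac{1}{2}+\epsilon_0}[0,t]} \;\lesssim\; t^{\theta}\Big( C(\omega) \,+\, \|w\|^{3}_{X^{s_0,\frac{1}{2}+\epsilon_0}[0,t]} \Big),
\end{equation*}
uniformly in $N$, together with the corresponding Lipschitz estimate for $\Phi_N(w)-\Phi_N(\tilde w)$. Here the random constant $C(\omega)$ collects the contributions of the seven multilinear pieces that contain at least one factor of $z_N$ (the lower order $\|w\|$ and $\|w\|^2$ contributions are absorbed into $C(\omega)$ on the unit ball), while the $\|w\|^3$ term corresponds to the purely deterministic piece $\NN(w,w,w)$, which is estimated by the trilinear $X^{s,b}$ machinery together with the irrational-torus counting lemmata of Section \ref{sec: counting}. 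Once $C(\omega)\lesssim 1$ on a ``good'' event, taking $t$ small yields a contraction on the unit ball of $X^{s_0,\frac{1}{2}+\epsilon_0}[0,t]$, producing $w_N$ with $\|w_N\|_{X^{s_0,\frac{1}{2}+\epsilon_0}[0,t]}\lesssim 1$.

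The main obstacle, and the heart of the proof, is the purely stochastic term $P_{\leq N}\NN(z_N,z_N,z_N)$; the mixed terms can each be handled by combining the probabilistic smoothing of $z_N$ with multilinear $X^{s,b}$ estimates for the deterministic factors, so that only a single moment computation per term is required. Because the Fourier coefficients of $z_N$ are independent Gaussians, $\NN(z_N,z_N,z_N)$ is an element of the cubic Wiener chaos, and Gaussian hypercontractivity reduces the large-deviation estimate in $X^{s_0,-\frac{1}{2}+\epsilon_0}$ to an $L^2(\Omega)$ bound on each dyadic frequency block. Computing that expectation yields precisely a counting problem over triples $(n_1,n_2,n_3)\in(\ZZZ^2)^3$ with $n_1 - n_2 + n_3 = n$ and with a prescribed near-resonance $n_1^2 - n_2^2 + n_3^2 = n^2 + O(N_2^{100s_1})$ in the irrational quadratic form $n_1^2+\gamma n_2^2$; the counting lemmata of Section \ref{sec: counting} supply the required bound, replacing the classical divisor estimate of \cite{bourgain1996invariant}. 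The place where genuine care is needed is that, on irrational tori, these counting bounds are weaker than on the square torus, so the resulting losses must be absorbed into the gain $t^\theta$ and into a small negative power of the largest dyadic frequency $N_1$. Summing over dyadic scales and over $n$ and applying the standard cubic chaos tail bound $\PPP(C(\omega) > \lambda)\lesssim e^{-c\lambda^{2/3}}$ with $\lambda$ chosen to be a suitable negative power of $t$ yields the claimed exceptional set of probability $\leq e^{-t^{-\alpha_0}}$.

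To obtain the convergence $w_N\to w$ in $X^{s',\frac{1}{2}+\epsilon_0}$ for every $s'<s_0$, I would rerun the same multilinear and probabilistic estimates on the difference $w_N-w_M$ (for $M\geq N$), using the derivative loss $s_0-s'>0$ to extract a factor $\lesssim N^{-(s_0-s')}$ from the high-frequency truncation in any $z_N-z_M$ or $w_N-w_M$ factor. This shows that $\{w_N\}$ is Cauchy in $X^{s',\frac{1}{2}+\epsilon_0}[0,t]$ on the same good event; passing to the limit term by term on both sides of the Duhamel formula then gives \eqref{eq: duhamelmain2} for $u^\omega := w + e^{it\Delta_\gamma}u_0$.
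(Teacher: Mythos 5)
There is a genuine gap, and it lies exactly where you declare the problem essentially solved. You identify the purely stochastic term $\NN(z_N,z_N,z_N)$ as ``the main obstacle'' and propose to dispatch the mixed terms by ``a single moment computation per term.'' The paper's case analysis shows the opposite: the purely random cases (k), (l) are among the easiest (they are handled in a few lines at the end of Section 8), while the hard cases are the mixed ones, (a) $N_1(II)\geq N_2(I)\geq N_3(II)$ and (c) $N_1(I)\geq N_2(II)\geq N_3(II)$, precisely because the random factor sits at a relatively high frequency and there is only one random factor to exploit. More importantly, a moment computation for a mixed term is not available in the form you suggest: the deterministic factors $w_i$ must range over the entire unit ball of $X^{s_0,b_0}$ (the contraction has to hold for all such $w$, and the eventual fixed point $w_N$ is itself $\omega$-dependent, so fixing $w$ and taking $\EEE$ is circular). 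What is actually needed is a bound on the \emph{random operator} $w\mapsto \NN_1(z_N,w,\cdot)$ etc., uniform over the unit ball. The paper achieves this by separating the random kernel $\sigma(n,n_2)$ from the deterministic coefficients via the Cauchy--Schwarz/Frobenius-norm inequality of Appendix C, applying the large deviation estimate only to the diagonal sum $\max_n\sum_{n_2}|\sigma(n,n_2)|^2$ and the off-diagonal sum $\bigl(\sum_{n\neq n'}|\sum_{n_2}\sigma(n,n_2)\overline{\sigma(n',n_2)}|^2\bigr)^{1/2}$, and then feeding the resulting pair-counting problems into the counting lemmata of Section 4. None of this machinery appears in your outline, and hypercontractivity applied to a fixed cubic chaos element does not substitute for it.

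A secondary but related omission: because the operator-norm bounds must hold simultaneously for all dyadic blocks $N_1\gg N_2$ and all $\sim N_1^2/N_2^2$ frequency cubes $J$, one cannot afford to discard an exceptional set of probability $e^{-N_2^c}$ for each $J$ separately; the paper devotes Subsubsection 8.3.1 to showing the relevant resonance relations depend on $J$ only through $O(N_2^2)$ integer parameters, so the total exceptional set stays summable. Your plan of ``summing over dyadic scales and over $n$'' of individual tail bounds does not address this, and without it the union bound over exceptional sets diverges in the regime $N_1\gg N_2$. The remaining elements of your proposal (the reduction to a contraction with gain $\delta^{\epsilon_1}$, the treatment of $\NN_2$, and the convergence $w_N\to w$ for $s'<s_0$ by paying $N^{-(s_0-s')}$ on the high-frequency truncation) do match the paper's structure.
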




\subsection{Duhamel formula, Picard iteration and main propositions}
The proof follows from a Picard iteration scheme. One would like to write \eqref{eq: wnlsworking} into its Duhamel form, \eqref{eq: duhamelmain}. It is convenient to introduce an extra time cut off $\phi_{\delta}(t)=\phi(t/\delta)$, where $\phi\in C_0^\infty(\mathbb{R})$ is equal to $1$ on $[-1/2, 1/2]$ and $0$ outside $[-1,1]$, and consider instead the following slightly modified version of \eqref{eq: duhamelmain}:
\begin{equation}\label{eq: duhamelworking}
u'_{N}(t,x)=\phi_{\delta}(t)e^{it\Delta_{\gamma}}u_{0,N}+i\phi_{\delta}(t)\int_{0}^{t}e^{i(t-s)\Delta_{\gamma}}(P_{\leq N}\NN(\phi(t) u'_{N})\,ds.
\end{equation}
Note that when $t<\delta/2$, $u'_{N}$ is no different from $u_{N}$. 
In what  follows, for  convenience   we will not distinguish $u_{N}$ and $u'_{N}$.
Let
\begin{equation}
\Gamma_{N,\delta}u=i\phi_{\delta}(t)\int_{0}^{t}e^{i(t-s)\Delta_{\gamma}}P_{\leq N}\NN(\phi_\delta(t) u)\,ds,
\end{equation} and consider its formal limit as $N\to \infty$:
\begin{equation}
\Gamma_{\delta}u=i\phi_{\delta}(t)\int_{0}^{t}e^{i(t-s)\Delta_{\gamma}}\NN(\phi_{\delta}(t) u)\,ds.
\end{equation}
From the perturbative viewpoint, let
\begin{equation}
u_{N}=\phi_{\delta}(t)e^{it\Delta_{\gamma}}u_{0,N}+w_{N}(x,t).
\end{equation}
Then \eqref{eq: duhamelworking} is equivalent to 
\begin{equation}\label{eq: picardworking}
w_N(x,t)=\Gamma_{N,\delta}(\phi_{\delta}(t)e^{it\Delta_{\gamma}}u_{0,N}+w_N(x,t)),
\end{equation}which reduces Theorem \ref{thm: main2} to the following three propositions,

\begin{prop}\label{prop: final}
There exists a sufficiently small $\delta_{0}>0$ and $s_0\gg \epsilon_0>0$, and some $\alpha_{0}>0$,  such that  for every $0<\delta<\delta_{0}$, up to a set of measure $e^{-\delta^{-\alpha_0}}$ for some $\alpha_0$ depending on $\epsilon_0$, the map
\begin{equation}
w\mapsto \Gamma_{N,\delta}(\phi_{\delta}(t)e^{it\Delta_{\gamma}}u_{0,N}+w(x,t))
\end{equation}
is a contraction map on the space
\begin{equation}
\{w:\, \|w\|_{X^{s_0,b_0}}\leq 1\},
\end{equation}
where $b_0=1/2+\epsilon_0$.
\end{prop}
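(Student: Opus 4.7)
The plan is to run a Picard iteration in $X^{s_0,b_0}$ and reduce the contraction claim to multilinear bounds on the Wick ordered nonlinearity $\NN = \NN_1 + \NN_2$. The usual $X^{s,b}$ Duhamel estimate combined with the time cutoff $\phi_\delta$ supplies a small factor $\delta^\theta$ for some $\theta=\theta(\epsilon_0)>0$:
\[
\|\Gamma_{N,\delta}(u)\|_{X^{s_0,b_0}}\lesssim \delta^\theta\|\NNN(\phi_\delta u)\|_{X^{s_0,b_0-1}}.
\]
Hence it suffices to bound the right-hand side by $O(\delta^{-\theta/2})$ when $u=f_0+w$, $f_0:=\phi_\delta e^{it\Delta_\gamma}u_{0,N}$, $\|w\|_{X^{s_0,b_0}}\leq 1$, on an event of probability $\geq 1-e^{-\delta^{-\alpha_0}}$. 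Since $\NN$ is cubic, the Lipschitz bound in $w$ needed for contraction will follow from the same family of estimates applied to the trilinear difference.

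First I would expand $\NN(f_0+w)$ trilinearly into eight pieces according to how many of the three slots carry $f_0$ versus $w$. The fully deterministic term $\NN(w,w,w)$ is controlled by a classical trilinear $X^{s,b}$ estimate whose proof does not distinguish rational from irrational tori; multiplied by $\delta^\theta$ it contributes $\lesssim \delta^\theta\|w\|_{X^{s_0,b_0}}^3\leq \delta^\theta$ and stays safely inside the unit ball.

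The probabilistic terms containing at least one factor of $f_0$ are the main obstacle and the reason for the paper. The crucial structural point is that the Wick subtraction built into $\NN$ removes the resonant diagonal $n_2\in\{n_1,n_3\}$, so $\NN_1(f_0,f_0,f_0)$ is a Wiener chaos of order exactly three in the Gaussians $\{g_n\}$, and the two- and one-factor pieces are chaoses of order two and one (after the cancellation of the would-be log-divergent self-contraction). I would then apply a hypercontractive Khinchine-type inequality to turn polynomial $L^p_\omega$ growth into the sub-Gaussian tail $e^{-\delta^{-\alpha_0}}$. The relevant second-moment computation reduces each dyadic block to counting, for fixed output $(\tau,n)$, the number of triples $(n_1,n_2,n_3)$ with $n_1-n_2+n_3=n$ and
\[
n_1^2-n_2^2+n_3^2 = n^2 + O(N_2^{100s_1}), \qquad n^2 = n_1^2+\gamma n_2^2,
\]
coming from the $X^{s,b}$ weight. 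This is precisely where the counting lemmas of Section \ref{sec: counting} enter, replacing the divisor bounds of \cite{bourgain1996invariant} that are only available when $\gamma\in\mathbb{Q}$. The irrational counting loses a small power $N_2^{O(s_1)}$ relative to the rational case, and the main technical point I would need to verify is that this loss is absorbed by the $s_0$-derivative gain on the highest frequency, provided the parameters are ordered as $s_0\gg \epsilon_0, s_1 > 0$ with appropriately chosen implicit constants.

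Finally the mixed pieces $\NN(f_0,f_0,w)$ and $\NN(f_0,w,w)$ together with their permutations I would reduce to random operator-norm bounds on the deterministic slots: dualize the $X^{s_0,b_0-1}$ norm by pairing with a unit vector in $X^{-s_0,1-b_0}$, freeze the Gaussian inputs, and estimate the resulting random bilinear or linear operator in $w$ by the same Wiener-chaos / counting combination. The most delicate bookkeeping is that the probability gain $e^{-\delta^{-\alpha_0}}$ must survive a union bound over all dyadic frequency configurations; this dictates choosing $\alpha_0$ small relative to $\epsilon_0$ so that the per-scale sub-Gaussian gain beats the polynomial number of scales. Combining the four classes of estimates with the $\delta^\theta$ gain then yields the contraction bound and closes the fixed-point argument.
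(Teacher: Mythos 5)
Your outline matches the paper's strategy essentially step for step: reduction by duality and $X^{s,b}$ smoothing to a trilinear estimate on $\NN_1$ and $\NN_2$ (Proposition \ref{prop: onemore}), a case analysis over which slots carry the random free evolution versus a deterministic $w$ and over the relative dyadic frequencies, the counting lemmas of Section \ref{sec: counting} in place of Bourgain's rational divisor bounds, hypercontractivity (Lemma \ref{lem: generalwiener}) for the sub-Gaussian tails, and the Cauchy--Schwarz/operator-norm device of Appendix C for the mixed terms, with the union-bound-over-scales issue you flag handled exactly as you describe. This is the paper's proof in outline form, so no substantive divergence to report.
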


\begin{prop}\label{prop: cor1tofinal}
There exists a sufficiently small $\delta_{0}>0$ and $s_0\gg \epsilon_0>0$, and $\alpha_{0}>0$ s.t. for every $0<\delta<\delta_{0}$, up to a set of measure $e^{-\delta^{-\alpha_0}}$ for some $\alpha_0$ depending on $\epsilon_0$, the map
\begin{equation}
w\mapsto \Gamma_{\delta}(\phi_{\delta}(t)e^{it\Delta_{\gamma}}u_{0}+w(x,t))
\end{equation}
is a contraction map on the space
\begin{equation}
\{w:\, \|w\|_{X^{s_0,b_0}}\leq 1\},
\end{equation}where $b_0=1/2+\epsilon_0$.
\end{prop}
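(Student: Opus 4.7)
The plan is to mirror the contraction argument that (presumably) establishes Proposition~\ref{prop: final}, the only differences being that the projection $P_{\leq N}$ is removed and the random datum $u_{0,N}$ is replaced by its formal limit $u_{0}$. The key structural observation is that every bound entering Proposition~\ref{prop: final} is proved uniformly in $N$, and $P_{\leq N}$ plays no role beyond replacing a Littlewood--Paley block by itself, so its removal costs nothing on the deterministic side. Consequently, once one verifies that the un-projected $\NN$ and the full Gaussian series defining $u_{0}$ admit the same $X^{s_{0},b_{0}-1}$ and $X^{s_{0},b_{0}}$ bounds with comparable exceptional probability, the fixed-point argument closes verbatim.

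Concretely, I would proceed as follows. First, set $v_{0}:=\phi_{\delta}(t)e^{it\dr}u_{0}$ and establish $\|v_{0}\|_{X^{s_{0},b_{0}}}\lesssim \delta^{\kappa}$ for some $\kappa>0$ outside a set of measure $\leq e^{-\delta^{-\alpha_{0}}}$; this follows from a large-deviation estimate applied to the Gaussian series, now summed over all of $\ZZZ^{2}\setminus\{0\}$, together with the $\delta$-gain produced by the time cutoff $\phi_{\delta}$. Second, expand $\NN(v_{0}+w)$ into eight trilinear pieces according to whether each factor is $v_{0}$ or $w$, and bound each piece in $X^{s_{0},b_{0}-1}$: pure-$w$ factors are controlled deterministically via a Bourgain-type trilinear estimate, while each factor of $v_{0}$ is handled probabilistically using the counting lemmata of Section~\ref{sec: counting} together with the Wiener-chaos/hypercontractivity tail bounds. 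Third, a parallel difference bound yields
\begin{equation*}
\|\Gamma_{\delta}(v_{0}+w_{1})-\Gamma_{\delta}(v_{0}+w_{2})\|_{X^{s_{0},b_{0}}}\leq \tfrac{1}{2}\|w_{1}-w_{2}\|_{X^{s_{0},b_{0}}},
\end{equation*}
and combined with invariance of the unit ball gives the contraction.

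The main obstacle, exactly as in Proposition~\ref{prop: final}, is the purely random cube $\NN(v_{0},v_{0},v_{0})$: Wick ordering removes only the $n_{2}=n_{1}$ and $n_{2}=n_{3}$ resonances, so one must still sum over the surviving frequency configurations, and on an irrational torus the counting lemmata lose a small power $N_{2}^{100s_{1}}$ compared to the rational case treated in \cite{bourgain1996invariant}. This loss is absorbed by taking $s_{0}$ (and a fortiori $\epsilon_{0}$) sufficiently small; this is precisely the place where the strict inequality $s_{0}\gg \epsilon_{0}>0$ is used. A further, much milder subtlety specific to the present corollary is that $u_{0}\notin L^{2}(\TTT^{2})$ almost surely, so the high-frequency tail of $v_{0}$ that is truncated by $P_{\leq N}$ in Proposition~\ref{prop: final} now has to be summed explicitly; this causes no difficulty because the dyadic bounds coming from the counting lemmata and Gaussian tails decay polynomially in the highest frequency, leaving summable series.
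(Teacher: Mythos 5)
Your overall architecture matches the paper's: reduce the contraction to trilinear estimates on $\NN_1,\NN_2$ by duality and the $X^{s,b}$ smoothing bound, split according to which inputs are random and which lie in the unit ball of $X^{s_0,b_0}$, and handle the random factors via the counting lemmata of Section~\ref{sec: counting} together with Wiener-chaos tail bounds. However, your first concrete step is false and cannot be repaired as stated: you claim $\|v_0\|_{X^{s_0,b_0}}\lesssim\delta^{\kappa}$ for $v_0=\phi_\delta(t)e^{it\Delta_\gamma}u_0$ outside a small exceptional set. Since $\widehat{v_0}(n,\tau)=\widehat{\phi_\delta}(\tau-n^2)\,g_n(\omega)/|n|$, one has $\|v_0\|_{X^{s_0,b_0}}^2\sim_{\delta}\sum_n\langle n\rangle^{2s_0}|g_n(\omega)|^2/|n|^2$, which diverges almost surely for every $s_0\geq 0$ (its expectation is $\sum_n\langle n\rangle^{2s_0}|n|^{-2}=\infty$); no large-deviation estimate can produce a finite bound, let alone one vanishing with $\delta$. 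This is not peripheral: the entire difficulty of the problem is that the linear evolution of the data lies only in $X^{0-,b}$, so the $s_0$ derivatives carried by the Duhamel term must be \emph{gained} through the multilinear analysis. If your step were true, the deterministic local theory in $X^{s_0,b_0}$ would close immediately and the probabilistic machinery you invoke afterwards would be superfluous.

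What the paper uses in place of that step are dyadic-block bounds such as \eqref{eq: crudexsb} and \eqref{eq: randomlinfty}: outside a set of measure $e^{-\delta^{-c}}$ one has $\|P_N\phi(t)e^{it\Delta_\gamma}u_0\|_{X^{0,b_0}}+\|P_N\phi(t)e^{it\Delta_\gamma}u_0\|_{L^\infty_{t,x}}\lesssim N^{s_1}$ for $N$ above a threshold $N_{0}$ tied to $\delta$ by $N_{0}^{100}=\delta^{-1}$; that is, the random free evolution is blockwise almost in $X^{0,b_0}$ with a loss $N^{s_1}$, never in $X^{s_0,b_0}$. The factor $\delta^{\epsilon_1}$ is then gained from the low frequencies ($N\leq N_0$) via the time-localized Strichartz estimate \eqref{eqq: xsbstrloc}, and from the high frequencies via negative powers of $N$ produced by the counting lemmata, summed against $N_0^{-\epsilon_1}\sim\delta^{\epsilon_1/100}$. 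A further point you underestimate: the analysis is not symmetric in the three inputs (cases (a)--(l) depend on where the random factors sit relative to the highest frequency), and the irrational counting bounds (e.g.\ $\max(N_2N_1^{-1/3},1)$ in Lemma~\ref{lem: circle} in place of Bourgain's divisor-type bound) force a genuinely case-dependent interpolation between deterministic and probabilistic estimates; this is more than a loss of $N_2^{100s_1}$ absorbed by shrinking $s_0$.
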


\begin{prop}\label{prop:cor2tofinal}
Let $\delta_{0}$, $s_0$, $\epsilon_0$ be as in Proposition \ref{prop: final} and \ref{prop: cor1tofinal}. Let $w_{N}$ be the unique function (fixed point) in 
$\{w:\,\|w\|_{X^{s_0,b_0}}\leq 1\}$ such that 
\begin{equation}
w_{N}=\Gamma_{N,\delta}(\phi_{\delta}(t)e^{it\Delta_{\gamma}}u_{0,N}+w_{N}(x,t)),
\end{equation}
and 
let $w^{*}$ be the unique 
 function (fixed point) in 
$\{w:\, \|w\|_{X^{s_0,b_0}}\leq 1\}$ such that
\begin{equation}
w^{*}=\Gamma_{\delta}(\phi_{\delta}(t)e^{it\Delta_{\gamma}}u_{0,N}+w^{*}(x,t)).
\end{equation}
Then one has for all $s'<s_0$ and  as $N\rightarrow \infty$  that
\begin{equation}
w_{N}\rightarrow w^{*}  \text{ in } X^{s',\frac{1}{2}+\epsilon_0}.
\end{equation}
\end{prop}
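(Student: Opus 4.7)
The idea is to set up a contraction-type inequality for $w_N - w^*$ at the weaker regularity $X^{s', b_0}$ and to show that the inhomogeneity driving it decays as $N \to \infty$. Setting $F_N = \phi_\delta(t) e^{it\Delta_\gamma} u_{0,N} + w_N$ and $F^* = \phi_\delta(t) e^{it\Delta_\gamma} u_{0,N} + w^*$, subtracting the two fixed-point equations and telescoping through an intermediate term yields
\begin{equation*}
w_N - w^* \;=\; \bigl[\Gamma_{N,\delta}(F_N) - \Gamma_{N,\delta}(F^*)\bigr] \;+\; \bigl[\Gamma_{N,\delta}(F^*) - \Gamma_\delta(F^*)\bigr].
\end{equation*}
The first bracket is a contraction-type piece (since $F_N - F^* = w_N - w^*$), while the second is a tail piece involving only frequencies $|n|>N$.

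For the tail, one has
\begin{equation*}
\Gamma_{N,\delta}(F^*) - \Gamma_\delta(F^*) \;=\; -i\phi_\delta(t)\int_0^t e^{i(t-s)\Delta_\gamma}(I - P_{\leq N})\NN(\phi_\delta(t) F^*)\,ds.
\end{equation*}
Since $I - P_{\leq N}$ kills frequencies $\leq N$, the standard Plancherel bound $\|(I-P_{\leq N})g\|_{X^{s',b_0}} \leq N^{-(s_0-s')}\|g\|_{X^{s_0,b_0}}$ lets one trade regularity for decay in $N$. Plugging this in and applying the same trilinear $\NN$-estimates that underlie Proposition~\ref{prop: cor1tofinal} would give
\begin{equation*}
\bigl\|\Gamma_{N,\delta}(F^*) - \Gamma_\delta(F^*)\bigr\|_{X^{s',b_0}} \;\lesssim\; N^{-(s_0-s')}\,\delta^{\alpha}\,\|F^*\|_{X^{s_0,b_0}}^3.
\end{equation*}
On the favourable probability event of Propositions~\ref{prop: final}--\ref{prop: cor1tofinal}, the free-evolution term satisfies $\|\phi_\delta e^{it\Delta_\gamma}u_{0,N}\|_{X^{s_0,b_0}} = O(1)$ (uniformly in $N$) and $\|w^*\|_{X^{s_0,b_0}}\leq 1$ by construction, so this tail is $\lesssim N^{-(s_0-s')}$.

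For the contraction piece, the same trilinear estimates applied with two factors in the higher norm $X^{s_0,b_0}$ and the ``difference'' factor in $X^{s',b_0}$ yield
\begin{equation*}
\bigl\|\Gamma_{N,\delta}(F_N)-\Gamma_{N,\delta}(F^*)\bigr\|_{X^{s',b_0}} \;\leq\; C\delta^{\alpha}\bigl(\|F_N\|_{X^{s_0,b_0}}^2+\|F^*\|_{X^{s_0,b_0}}^2\bigr)\,\|w_N - w^*\|_{X^{s',b_0}}.
\end{equation*}
Shrinking $\delta$ as in Proposition~\ref{prop: final} absorbs the prefactor to at most $\tfrac12$, so rearranging the two estimates gives
\begin{equation*}
\|w_N - w^*\|_{X^{s',b_0}} \;\leq\; 2C\,N^{-(s_0-s')} \;\longrightarrow\; 0,\qquad N\to\infty.
\end{equation*}

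The main obstacle I anticipate is justifying the ``asymmetric'' trilinear estimate with one factor at the lower regularity $s'$ and two factors at the higher regularity $s_0$. In the deterministic part this ought to be harmless because the counting lemmata of Section~\ref{sec: counting} are multilinear in the Fourier weights and one can always put the larger weight $s_0$ on the two non-distinguished inputs; one must simply check that the resulting nonlinear bound survives without shrinking the admissible range of $(s', s_0, \epsilon_0)$. A secondary issue is that the probabilistic bound $\|\phi_\delta e^{it\Delta_\gamma}u_{0,N}\|_{X^{s_0,b_0}}=O(1)$ has to hold on the \emph{same} exceptional set as the contractions in Propositions~\ref{prop: final}--\ref{prop: cor1tofinal}, so that all three statements are in force simultaneously and the convergence $w_N\to w^*$ can be asserted with the claimed (overwhelming) probability.
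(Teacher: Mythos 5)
Your overall skeleton --- splitting $w_N-w^*$ into a contraction piece $\Gamma_{N,\delta}(F_N)-\Gamma_{N,\delta}(F^*)$ plus a tail $\Gamma_{N,\delta}(F^*)-\Gamma_\delta(F^*)$, and converting the projection $I-P_{\leq N}$ into a factor $N^{-(s_0-s')}$ by spending the derivative gap $s_0-s'$ --- is exactly the stability argument the paper has in mind. The paper itself gives no details here: it only records the remark that one "needs extra derivative" to absorb the difference between $P_{\leq N}\NN$ and $\NN$ (which is why the convergence is only for $s'<s_0$) and defers to Section 5 of Bourgain's paper. So your route is the intended one.

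There is, however, one step that is genuinely false as written: the claim that $\|\phi_\delta e^{it\Delta_\gamma}u_{0,N}\|_{X^{s_0,b_0}}=O(1)$ uniformly in $N$. Almost surely $\|u_{0,N}\|_{H^{s_0}}^2=\sum_{|n|\leq N}|g_n(\omega)|^2|n|^{2s_0-2}\sim N^{2s_0}\to\infty$ for $s_0>0$ (the data lives only in $H^s$, $s<0$), so the free evolution is not a bounded element of $X^{s_0,b_0}$ and the displayed bounds involving $\|F^*\|_{X^{s_0,b_0}}^3$ and $\|F_N\|_{X^{s_0,b_0}}^2$ are vacuous; were that claim true, the entire probabilistic apparatus of the paper would be unnecessary. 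The repair is to never take a norm of $F_N$ or $F^*$: expand $\NN$ trilinearly so that each input is either the free evolution (type (I)) or an element of the unit ball of $X^{s_0,b_0}$ (type (II), including the difference factor $w_N-w^*$), and invoke the case-by-case estimates of Proposition \ref{prop: onemore} for each of the eight configurations; these yield the factor $\delta^{\epsilon_1}$ directly, on the same exceptional set already removed for Propositions \ref{prop: final}--\ref{prop: cor1tofinal}. With that substitution --- together with the check you already flag, that the dyadic estimates tolerate placing the distinguished type-(II) factor in $X^{s',b_0}$ under the weight $\langle D\rangle^{s'}$, which they do since each block carries a definite negative power of the top frequency --- the two inequalities combine into $\|w_N-w^*\|_{X^{s',b_0}}\leq \tfrac12\|w_N-w^*\|_{X^{s',b_0}}+CN^{-(s_0-s')}$ and the proof closes.
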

\begin{rem}
Note that Proposition \ref{prop: final} is stated  uniformly over all $N>0$. Thus to prove Proposition \ref{prop: final} is equivalent to prove Proposition \ref{prop: cor1tofinal}. 
For those who are familiar with the Picard iteration scheme, Proposition \ref{prop:cor2tofinal} is a stability argument that is essentially equivalent to the local existence argument giving Proposition \ref{prop: final} and \ref{prop: cor1tofinal}. However, to take into account  the difference between $\NNN(u_{N})$ and $\NN(u_{N})$, one will need to use extra derivative, which is the reason why the convergence in Proposition \ref{prop:cor2tofinal} only holds for $s'<s_0$.
\end{rem}
We will focus on the proof of Proposition \ref{prop: cor1tofinal}, which is the same as the one for  Proposition  
\ref{prop: final}, then  Proposition   \ref{prop:cor2tofinal} follows by using the same argument as in Section 5 of \cite{bourgain1996invariant}. One may also use the invariance  of the Gibbs measure to upgrade the local 
well-posedness to  a global one as  in \cite{bourgain1996invariant}.

\section{Proof of Proposition \ref{prop: cor1tofinal}: initial reduction and structure of the proof}

In this section, we outline the proof of Proposition \ref{prop: cor1tofinal}.
To begin with, fix $w$, one has by definition that
\begin{equation}\label{eqn: reduction}
\Gamma_\delta (\phi_\delta(t)e^{it\Delta_\gamma}u_0+w)=i\phi_\delta(t)\int_0^t e^{i(t-s)\Delta_\gamma}\mathcal{N}(\phi(t)e^{it\Delta_\gamma}u_0+\phi(t)w)\,ds=: A+B,
\end{equation}where, according with \eqref{defnN},  $A$ corresponds to $\NN_1$ and $B$ to $\NN_2$ respectively. We also use $\phi(t)\phi(t/\delta)=\phi(t)$. The estimate for part $B$ follows from standard $X^{s,b}$ space estimates, which we present in the end of Section \ref{sec:PreDet} for the sake of completeness. In order to study part $A$, we consider the Wick ordered nonlinearity $\NN_1$ as a trilinear expression, replacing the $w$ above by three functions $w_1, w_2, w_3$.  

Using $X^{s,b }$ smoothing \eqref{eq:Xsb} and duality, Proposition \ref{prop: cor1tofinal} will follow from
\begin{prop}\label{prop: onemore}
There exist $\delta,\delta_{0}, \alpha_{0},\epsilon_{0},b$ as in Proposition \ref{prop: cor1tofinal} satisfying $\epsilon_{0}\ll\epsilon_{1}\ll s_{0}$, so that for any $h(x,t)$ with
$\|h\|_{X^{0, 1-b_{0}}}\leq 1$, $h_0(x,t):=\phi(t/\delta)h(x,t)$,  one has estimate
\begin{equation}\label{eqn: mainest}
\left|\int_{\RRR\times \TTT^{2}}<D>^{s_{0}}(\NN_{1}(u_{1},u_{2},u_{3}))\overline{h_0}\,dxdt\right|\lesssim \delta^{\epsilon_{1}},
\end{equation}
\begin{equation}\label{eqn: mainest2}
\left|\int_{\RRR\times \TTT^{2}}<D>^{s_{0}}(\NN_{2}(u_{1},u_{2},u_{3}))\overline{h_0}\,dxdt\right|\lesssim \delta^{\epsilon_{1}},
\end{equation}
where $u_i$ is either $\phi_\delta(t)e^{it\Delta_\gamma}u_0$ or $w_i$.
\end{prop}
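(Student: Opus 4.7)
The plan is to follow the multilinear $X^{s,b}$ Picard iteration scheme of Bourgain, combining the new counting lemmata of Section \ref{sec: counting} with standard large-deviation / Wiener chaos estimates for the random data. First, I would perform a Littlewood-Paley decomposition $u_i=\sum_{N_i}P_{N_i}u_i$ and $h_0=\sum_{N}P_{N}h_0$, reducing \eqref{eqn: mainest} to a dyadic sum over $(N_1,N_2,N_3,N)$ with, by symmetry, $N_1\geq N_2\geq N_3$. Plancherel in $(x,t)$ converts the left-hand side into a sum over tuples $n_1-n_2+n_3=n$ coupled with a modulation variable $\tau$ and the dispersive balance $\tau-n_1^2+n_2^2-n_3^2=0$ up to a modulation tolerance. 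The Wick subtraction in $\NN_1$ removes the diagonal pieces $n_2=n_1$ and $n_2=n_3$, which is precisely what lets the counting lemmata apply to the remaining resonant set.

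Next, I would split into cases according to which of $u_1,u_2,u_3$ is the random free evolution $\phi_\delta(t)e^{it\dr}u_0$ (type $R$) versus a deterministic remainder $w_i\in X^{s_0,b_0}$ (type $D$). Up to the obvious symmetry this gives four subcases $RRR$, $RRD$, $RDD$, $DDD$. The $DDD$ case is purely deterministic and follows from the irrational counting lemmata combined with Cauchy-Schwarz and the $X^{s_0,b_0}$ bounds on the three $w_i$. For every factor of type $R$ the Fourier coefficient is $g_n(\omega)/|n|$, so the random inputs yield a Wiener chaos whose order equals the number of $R$-factors; I would then invoke the standard hypercontractivity / Gaussian chaos large-deviation inequality, which ensures that on the complement of a set of $\omega$ of measure $\lesssim e^{-\delta^{-\alpha_0}}$ the chaos is dominated by the $\ell^2$ norm of its deterministic symbol, and this $\ell^2$ norm is in turn controlled via the counting lemmata of Section \ref{sec: counting}.

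The small power $\delta^{\epsilon_1}$ is extracted from the time cutoff $\phi_\delta$: its Fourier transform in $t$ concentrates in a window of width $\delta^{-1}$, so pairing against a unit vector in $X^{0,1-b_0}$ with $b_0=\tfrac12+\epsilon_0$ produces a factor $\delta^{\epsilon_0}$ per cutoff. To upgrade $\delta^{\epsilon_0}$ to $\delta^{\epsilon_1}$ uniformly in the dyadic sum one trades a small negative power of the largest frequency $N_1$ against the regularity budget $s_0$, which is exactly where the hierarchy $\epsilon_0\ll\epsilon_1\ll s_0$ gets fixed. The $\NN_2$ estimate \eqref{eqn: mainest2} is easier: the diagonal form $\sum_n \hat f(n)\overline{\hat g(n)}\hat h(n)e^{in\cdot x}$ reduces to a bilinear $L^2$-type object, and in the all-random subcase only the quadratic chaos $\sum_n |g_n|^2/|n|^2$ (a chi-square sum) appears, which is controlled by a direct tail estimate.

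The main obstacle will be the $RRR$ subcase of \eqref{eqn: mainest}. On an irrational torus the number of tuples satisfying both $n_1-n_2+n_3=n$ and $n_1^2-n_2^2+n_3^2=n^2+O(1)$ is genuinely worse than the sharp bound available on the square torus, so the losses incurred in the new counting lemmata must be compensated exactly by the three ingredients above: the high-frequency decay $N_1^{-s_0}$, the probabilistic gain from the chaos estimate, and the time-localization factor $\delta^{\epsilon_1}$. The delicate book-keeping is to verify that in the worst resonant regime these gains still beat the counting loss with room to spare for dyadic summability in $(N_1,N_2,N_3,N)$. Once $RRR$ closes, the mixed $RRD$ and $RDD$ cases interpolate between $RRR$ and $DDD$ and present no additional difficulty.
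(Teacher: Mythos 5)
Your skeleton (dyadic decomposition, splitting by random versus deterministic inputs, reduction to the resonant set via the modulation variable, counting lemmata plus Gaussian chaos estimates, and extraction of $\delta^{\epsilon_1}$ from the low-frequency/time-localization trade-off against a threshold frequency $N_0\sim\delta^{-1/100}$) matches the paper's overall strategy. But there is a genuine conceptual gap in where you locate the difficulty, and it would derail the execution. You classify cases only by the \emph{number} of random factors ($RRR$, $RRD$, $RDD$, $DDD$) and claim the all-random case is the main obstacle, with the mixed cases "interpolating" and presenting "no additional difficulty." The paper's analysis shows the opposite: the all-random cases (k), (l) are among the easiest, because each random factor carries the decay $|n_i|^{-1}$ from the data and a full chaos gain, while the genuinely hard cases are those with a \emph{single} random input sitting at or near the top of the frequency hierarchy --- cases (a) ($N_1(II)\geq N_2(I)\geq N_3(II)$) and (c) ($N_1(I)\geq N_2(II)\geq N_3(II)$). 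There the randomization provides the least decay in $N_1$ and the least cancellation, so neither the deterministic $X^{s,b}$ estimates nor a crude large-deviation bound closes the estimate. One must track not just how many inputs are random but \emph{where} the random factor sits in the ordering $N_1\geq N_2\geq N_3$ (hence the thirteen cases (0), (a)--(l)), and there is no interpolation between the extreme cases.

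Two further ingredients are missing from your plan and are not optional. First, in the hard single-random cases the workhorse is not the scalar chaos estimate of Lemma \ref{lem: generalwiener} applied to the whole multilinear form, but the bilinear Cauchy--Schwarz (Frobenius norm) inequality \eqref{eq: m2}: after fixing the deterministic coefficients one bounds $\sum_n\bigl|\sum_{n_2}\sigma(n,n_2)b_{n_2}\bigr|^2$ by the diagonal term plus $\bigl(\sum_{n\neq n'}\bigl|\sum_{n_2}\sigma(n,n_2)\overline{\sigma(n',n_2)}\bigr|^2\bigr)^{1/2}$, and it is the off-diagonal count --- controlled by Lemmata \ref{lem: circle} and \ref{lem: count strong} --- that beats the weaker irrational counting. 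Second, when the highest frequency is deterministic (cases (e)--(h)) one must decompose $\{|n_1|\sim N_1\}$ into $\sim N_1^2/N_2^2$ balls $J$ of radius $N_2$ and apply the large-deviation argument on each; a naive union bound over the $J$'s destroys the probability estimate, and one needs the reduction of Subsubsection \ref{eeeee} showing that only $O(N_2^2)$ essentially distinct exceptional sets arise. Without these two devices the "delicate book-keeping" you defer to cannot be made to close.
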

Here $<\cdot>$ is the Japanese bracket, $<D>:=\sqrt{1-\Delta}$.
\begin{rem}
We will neglect any loss of $\delta^{-C\epsilon_{0}}$ throughout the proof, since eventually all such loss will be compensated by the gain of $\delta^{\epsilon_{1}}$. In particular, one should not be concerned  about the loss in $X^{s,b}$ localization by multiplying $\phi(t/\delta)$.
\end{rem}

In the two estimates above, (\ref{eqn: mainest2}) follows easily from deterministic estimates, whose proof will be given at the end of Section \ref{sec:PreDet}. The majority of the rest of the paper is devoted to proving (\ref{eqn: mainest}).

More precisely, the proof of (\ref{eqn: mainest}) splits into eight different cases depending on whether the input functions $u_i$ are of the \emph{regular} (in the space $X^{s_0,b_0}$) or \emph{probabilisitic} forms. In addition, we further decompose each $u_i$ into pieces corresponding to different spatial Fourier frequencies (i.e. replacing $u_i$ with $P_{ N_i} u_i$ for some dyadic numbers $N_i$), then the desired result follows from a case by case study depending on the relative sizes of the spatial frequencies $N_1, N_2, N_3$. Note that the roles of $N_1$ and $N_3$ are completely symmetric as shown in the definition of $\NN$, so without loss of generality we may assume $N_1\geq N_3$ throughout. There are thus two main cases: $N_2 \geq N_1\geq N_3$ and $N_1 \geq N_2$.

The first case $N_2\geq N_1\geq N_3$ turns out to be easier, which we treat in Section \ref{sec: ProofPartI}. The second case needs to be further decomposed depending on the relative sizes of $N_2, N_3$ and where the random terms appear.  Following Bourgain's notation, we will use $II$ to denote the regular case (i.e. $u_i=w_i$) and $I$ to denote the probabilistic case (i.e. $u_i$ is the cutoff of the free solution with random initial data $u_0$). In Section \ref{sec: ProofPartII} and \ref{sec: ProofPartIII}, we will first estimate two typical cases: $N_1(II)\geq N_2(I) \geq N_3(II)$ (corresponding to ``case (a)'' of  \cite{bourgain1996invariant}) and $N_1(I)\geq N_2(II) \geq N_3(II)$ (corresponding to ``case (c)'' of  \cite{bourgain1996invariant}). These two cases are typical in the sense that all essential elements of the proof and  ideas will be displayed in the study of these two cases. Essentially  this is because in the two cases, the random term appears in relatively higher frequencies hence there is less decay in terms of $N_1$ that one would expect; there is also no additional random term present which prevents one to fully exploit the cancellation brought by randomization. Note that these two sections are the main part of our proof. We will discuss  the treatment of other cases in Section \ref{sec: ProofpartIV}.
\subsection{A list of cases}
Following Bourgain, we need to study
\begin{itemize}
\item Case (0): $N_{2}\gg N_{1}$;
\item Case (a): $N_1(II)\geq N_2(I)\geq N_3(II)$;
\item Case (b): $N_1(II)\geq N_3(I)\geq N_2(II)$;
\item Case (c): $N_1(I)\geq N_2(II)\geq N_3(II)$;
\item Case (d): $N_1(I)\geq N_3(II)\geq N_2(II)$;
\item Case (e): $N_1(II)\geq N_2(I)\geq N_3(I)$;
\item Case (f): $N_1(II)\geq N_3(I)\geq N_2(I)$;
\item Case (g): $N_1(I)\geq N_2(I)\geq N_3(II)$;
\item Case (h): $N_1(I)\geq N_3(I)\geq N_2(II)$;
\item Case (i): $N_1(I)\geq N_2(II)\geq N_3(I)$;
\item Case (j): $N_1(I)\geq N_3(I)\geq N_2(II)$;
\item Case (k): $N_1(I)\geq N_2(I)\geq N_3(I)$;
\item Case (l): $N_1(I)\geq N_3(I)\geq N_2(I)$.
\end{itemize}
\begin{rem}
Strictly speaking, one will need to study, for example in case (a), $N_{2}\lesssim N_{1}, N_{2}\geq N_{3}$. The analysis will be the same as for $N_{1}\geq N_{2}\geq N_{3}$, we neglect this issue.
\end{rem}

\subsection{Notation}
For the sake of notational convenience, we will denote $\langle,\rangle_{\gamma}$ by $\langle, \rangle$ in short. 
We use $P_N$, $P_{\leq N}$ to denote Littlewood Paley projections in the physical space ($x$ variable), as mentioned above. We will  use $P_{|\tau|<M}$ as Littlewood Paley projections in the time space ($t$ variable). We will also use $P_{|\tau-n^{2}|<M}$ to denote space time Littlewood Paley projections with respect to paraboloids.


For the sake of convenience, we sometimes abuse notation by identifying  $P_{N}^{2}=P_{N}$, $\phi(t)^{2}=\phi(t)$.
Throughout the paper, we  use several parameters, and we always require 
\begin{equation}
1\geq s_{0}\gg s_{1}\geq \epsilon_{1}\gg \epsilon_{0}>0.
\end{equation}


\section{Preliminaries}\label{sec:Pre}

\subsection{The $X^{s,b}$ space}\label{sec:Xsb}

In this subsection, we recall the definition of the $X^{s,b}$ space and summarize some classical estimates  that will be used in the proof. One may refer to \cite{bourgain1993fourier}, \cite{caffarelli1999hyperbolic}, \cite{bourgain2014proof} for more details.

 Let $v(t,x)$ be a function on $\RRR\times \mathbb{T}^{2}$. Let $\widehat{v}$ be the Fourier transform of $v$, i.e. 
\[
v(t,x)=\int_{\RRR} \sum_{n\in \ZZZ^{2}}\widehat{v}(n,\tau)e^{in\cdot x}e^{i\tau t}\,d\tau,
\]
and the $X^{s,b}$ norm can be defined as
\begin{equation}\label{eq: Xsbnorm}
\|v\|_{X^{s,b}}^{2}:=\int_{\RRR}\sum_{n\in \mathbb{Z}^2} <n>^{2s}<\tau-n^{2}>^{2b}|\widehat{v}(n,\tau)|^{2}\,d\tau,
\end{equation}where $<n>:=\sqrt{1+n^2}$ is the Japanese bracket.

Note that another convenient way to define the $X^{s,b}$ norm is via the ansatz
\begin{equation}
v(t,x)=\int_{\RRR}\sum_{n\in \ZZZ^{2}} a(n,\lambda)e^{in\cdot x}e^{in^{2}t}e^{i \lambda t}\,d\lambda,
\end{equation}
which gives
\begin{equation}
\|v\|_{X^{s,b}}^{2}=\int_\RRR\sum_{n\in\mathbb{Z}^2} |a(n,\lambda)|^{2}<n>^{2s}<\lambda>^{2b}\,d\lambda.
\end{equation}
The $X^{s,b}$ space is very useful in dispersive PDE for at least two reasons: first, it inherits the Strichartz estimates enjoyed by free solutions of the Schr\"odinger equation; second, it exploits the smoothing effect of the Duhamel formula. 

We now recall the Strichartz estimates on tori, rational or irrational, \cite{bourgain1993fourier},\cite{bourgain2014proof},
\begin{equation}
\|e^{it\Delta_\gamma} (P_Bf)\|_{L_{t,x}^{4}([0,1]\times \TTT^{2})}\lesssim_\epsilon N^\epsilon \|f\|_{L^2_x},
\end{equation}
where $P_B$ is the Littlewood-Paley projection onto the spatial frequency ball $B$ of radius $N$ (not necessarily centered at the origin).

By the Minkowski inequality and Cauchy-Schwarz, this implies that 
\begin{lem}For any $u\in X^{0, \frac{1}{2}+\epsilon'}$, there holds
\begin{equation}\label{eq: Xsbstri} 
\|P_Bu\|_{L_{t,x}^{4}([0,1]\times \TTT^{2})}\lesssim_{\epsilon,\epsilon'} N^{\epsilon} \|u\|_{X^{0, \frac{1}{2}+\epsilon'}}.
\end{equation}
\end{lem}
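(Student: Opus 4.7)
The plan is to use the standard transfer principle: express $u$ as a continuous superposition of modulated free Schr\"odinger evolutions indexed by the modulation parameter $\lambda$, apply the free-solution Strichartz bound pointwise in $\lambda$, and then recombine using Minkowski and Cauchy--Schwarz. This is exactly the scheme the hint (``By the Minkowski inequality and Cauchy--Schwarz\dots'') is pointing to.

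Concretely, I would start from the ansatz representation already written in the subsection,
\begin{equation}
u(t,x) = \int_{\RRR} \sum_{n\in\ZZZ^{2}} a(n,\lambda)\, e^{in\cdot x}\, e^{in^{2}t}\, e^{i\lambda t}\, d\lambda = \int_{\RRR} e^{i\lambda t}\, e^{it\Delta_{\gamma}} f_{\lambda}(x)\, d\lambda,
\end{equation}
where $f_{\lambda}(x) := \sum_{n} a(n,\lambda) e^{in\cdot x}$ satisfies $\|f_{\lambda}\|_{L^{2}_{x}}^{2} = \sum_{n} |a(n,\lambda)|^{2}$. Since $P_{B}$ commutes with $e^{it\Delta_{\gamma}}$ and with the $e^{i\lambda t}$ factor, we have $P_{B}u(t,x) = \int_{\RRR} e^{i\lambda t}\, e^{it\Delta_{\gamma}}(P_{B}f_{\lambda})(x)\, d\lambda$.

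Taking $L^{4}_{t,x}([0,1]\times\TTT^{2})$ norms and applying Minkowski's integral inequality, followed by the assumed free-solution Strichartz estimate to the inside, yields
\begin{equation}
\|P_{B}u\|_{L^{4}_{t,x}} \leq \int_{\RRR} \|e^{it\Delta_{\gamma}}(P_{B}f_{\lambda})\|_{L^{4}_{t,x}}\, d\lambda \lesssim_{\epsilon} N^{\epsilon} \int_{\RRR} \|P_{B}f_{\lambda}\|_{L^{2}_{x}}\, d\lambda.
\end{equation}
Finally, I would insert the weight $\langle\lambda\rangle^{1/2+\epsilon'}$ and apply Cauchy--Schwarz in $\lambda$:
\begin{equation}
\int_{\RRR} \|P_{B}f_{\lambda}\|_{L^{2}_{x}}\, d\lambda \leq \left(\int_{\RRR} \langle\lambda\rangle^{-1-2\epsilon'}\, d\lambda\right)^{1/2}\left(\int_{\RRR} \langle\lambda\rangle^{1+2\epsilon'}\|P_{B}f_{\lambda}\|_{L^{2}_{x}}^{2}\, d\lambda\right)^{1/2}.
\end{equation}
The first factor is finite because $1+2\epsilon' > 1$, and the second factor equals $\|P_{B}u\|_{X^{0,1/2+\epsilon'}} \leq \|u\|_{X^{0,1/2+\epsilon'}}$ by the definition of the $X^{s,b}$ norm through the same ansatz.

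There is essentially no obstacle: the argument is a formal manipulation that upgrades a Strichartz bound for free solutions into one for general $X^{s,b}$ functions. The only delicate point is the requirement $\epsilon' > 0$, which is precisely what makes the $\langle\lambda\rangle^{-1-2\epsilon'}$ integral converge; this explains why the endpoint $b=1/2$ is excluded. Note that the same argument works verbatim to transfer any space-time Strichartz bound for $e^{it\Delta_{\gamma}}$ into a bound on $X^{0,1/2+\epsilon'}$, which is useful since several of the later case analyses in the paper will want Strichartz-type $L^{p}_{t,x}$ bounds directly on $X^{s,b}$ inputs.
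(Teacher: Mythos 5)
Your argument is correct and is exactly the transfer-principle proof the paper intends: the paper derives \eqref{eq: Xsbstri} from the free-solution Strichartz estimate ``by the Minkowski inequality and Cauchy--Schwarz,'' which is precisely the superposition-over-$\lambda$ argument you spell out. No gaps.
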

Via an interpolation with the Hausdorff-Young inequality, the estimate above can be upgraded to
\begin{equation}\label{eq: Xsbhy}
\|P_Bu\|_{L_{t,x}^{4}([0,1]\times \TTT^{2})}\lesssim_\epsilon N^\epsilon \|u\|_{X^{0, \frac{1}{2}-\frac{\epsilon}{4}}}.
\end{equation}


We also record another estimate, which follows immediately by interpolating (\ref{eq: Xsbstri}) with the trivial bound
\[
\|u\|_{L_{t,x}^2([0,1]\times \mathbb{T}^{2})}\lesssim \|u\|_{X^{0,0}}.
\]
\begin{lem}\label{lem: interpolate}For any $u\in X^{0,\frac{1}{3}}$, there holds
\begin{equation}\label{eq: interpo}
\|P_Bu\|_{L_{t,x}^{3}([0,1]\times \mathbb{T}^{2})}\lesssim_{\epsilon} N^\epsilon \|u\|_{X^{0,\frac{1}{3}}}.
\end{equation}
\end{lem}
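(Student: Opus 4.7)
The plan is to derive the $L^3$ bound via Riesz--Thorin complex interpolation applied to the linear operator $u\mapsto P_Bu$ viewed as a map from $X^{0,b}$ into $L^p_{t,x}([0,1]\times\TTT^2)$, between two endpoint estimates. The first endpoint is the upgraded Strichartz-type bound \eqref{eq: Xsbhy}, namely $\|P_Bu\|_{L^4_{t,x}}\lesssim_{\epsilon} N^{\epsilon}\|u\|_{X^{0,1/2-\epsilon/4}}$. The second endpoint is the trivial identity $\|P_Bu\|_{L^2_{t,x}}\leq \|u\|_{X^{0,0}}$, which is Plancherel in $(t,x)$ combined with the uniform $L^2$-boundedness of the frequency projector $P_B$.

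To land on $L^3$, I would choose the interpolation parameter $\theta\in[0,1]$ satisfying $\tfrac{1}{3}=\tfrac{1-\theta}{2}+\tfrac{\theta}{4}$, which gives $\theta=\tfrac{2}{3}$. Rewriting the $X^{s,b}$ norm as a weighted $L^2$ norm on the Fourier side, as in \eqref{eq: Xsbnorm}, recasts $u\mapsto P_Bu$ as a Fourier multiplier between weighted $L^2$ spaces, so the hypotheses of complex interpolation are immediate. Applying Riesz--Thorin with exponent $\theta=2/3$ and combining the two endpoint bounds produces
\begin{equation*}
\|P_Bu\|_{L^3_{t,x}([0,1]\times\TTT^2)}\lesssim_{\epsilon} N^{2\epsilon/3}\,\|u\|_{X^{0,\,(2/3)(1/2-\epsilon/4)}}=N^{2\epsilon/3}\,\|u\|_{X^{0,\,1/3-\epsilon/6}}.
\end{equation*}
After relabeling the arbitrarily small $\epsilon$ and using the trivial embedding $X^{0,1/3}\hookrightarrow X^{0,1/3-\epsilon/6}$ (higher $b$-regularity being a stronger condition), this is exactly \eqref{eq: interpo}.

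There is essentially no hard step in this argument: it is a one-line complex interpolation. The only mild bookkeeping subtlety, and the reason I would interpolate against \eqref{eq: Xsbhy} rather than \eqref{eq: Xsbstri}, is that starting from the latter would produce a $b$-index of the form $1/3+O(\epsilon')$ instead of $1/3-O(\epsilon)$; the resulting estimate would still be sufficient for the applications below (since the slight excess in $b$ is absorbed by the $N^{\epsilon}$ loss one is willing to pay) but strictly weaker than the stated form. Once the endpoint \eqref{eq: Xsbhy} is used, the lemma follows immediately.
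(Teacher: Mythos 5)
Your proof is correct and is essentially the paper's own argument: the paper states that \eqref{eq: interpo} "follows immediately by interpolating \eqref{eq: Xsbstri} with the trivial bound $\|u\|_{L^2_{t,x}}\lesssim\|u\|_{X^{0,0}}$," which is exactly your Riesz--Thorin computation with $\theta=2/3$. Your one refinement --- using the Hausdorff--Young-upgraded endpoint \eqref{eq: Xsbhy} so that the interpolated $b$-index lands at $1/3-\epsilon/6\le 1/3$ rather than $1/3+O(\epsilon)$ --- is a legitimate and slightly more careful way to obtain the lemma exactly as stated.
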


As mentioned earlier,  the $X^{s,b}$ space also exploits the smoothing effect of the Duhamel formula, which can be made precise by the following estimate. 
\begin{lem}\label{lem: Xsbsmoothing}
For all $s\geq 0, b > \frac{1}{2}$ and time cut-off function $\phi$ as above, there holds
\begin{equation}\label{eq:Xsb}
\left\|\phi(t)\int_{0}^{t}e^{i(t-s)\Delta_\gamma} v(s)\,ds\right\|_{X^{s,b}}
\lesssim \|v\|_{X^{s,(b-1)}}.
\end{equation}
\end{lem}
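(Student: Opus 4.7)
The plan is to follow the standard approach to the Duhamel $X^{s,b}$ smoothing estimate, going back to \cite{bourgain1993fourier}. Since the weight $\langle n\rangle^{s}$ factors through the argument, it suffices to treat the case $s=0$. Expanding $v$ via its spacetime Fourier transform and carrying out the inner time integral explicitly gives
\begin{equation*}
\phi(t)\int_{0}^{t}e^{i(t-s)\Delta_{\gamma}}v(s)\,ds \;=\; \phi(t)\int_{\RRR}\sum_{n}\widehat{v}(n,\tau)\,e^{in\cdot x}\cdot\frac{e^{i\tau t}-e^{in^{2}t}}{i(\tau-n^{2})}\,d\tau,
\end{equation*}
so the analysis reduces to understanding, for each fixed $n$, the time-kernel $K(t;\lambda):= e^{in^{2}t}\,(e^{i\lambda t}-1)/(i\lambda)$ with $\lambda := \tau-n^{2}$, multiplied by the cutoff $\phi(t)$.

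I would then dyadically decompose in $|\lambda|$. In the low-modulation regime $|\lambda|\lesssim 1$, the mean-value theorem yields $|K(t;\lambda)|\lesssim |t|$, so $\phi(t)K(t;\lambda)$ is Schwartz in $t$ with spacetime Fourier transform concentrated near modulation $0$ with Schwartz tails; this contribution is absorbed into any $X^{0,b}$ norm with a constant depending only on $\phi$. In the high-modulation regime $|\lambda|\gtrsim 1$, I would split
\begin{equation*}
\phi(t)K(t;\lambda) \;=\; \frac{1}{i\lambda}\bigl(\phi(t)e^{i\tau t}-\phi(t)e^{in^{2}t}\bigr),
\end{equation*}
whose spacetime Fourier transform in the output time variable $\tau'$ equals $\frac{1}{i\lambda}\bigl(\widehat{\phi}(\tau'-\tau)-\widehat{\phi}(\tau'-n^{2})\bigr)$. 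The first summand localizes the output modulation $\tau'-n^{2}$ near $\lambda$ with Schwartz tails, while the second localizes it near $0$ with Schwartz tails; crucially both gain a prefactor of $|\lambda|^{-1}$.

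Substituting these decompositions into the definition \eqref{eq: Xsbnorm} of the $X^{0,b}$ norm and applying Minkowski together with Cauchy--Schwarz, the contribution from each dyadic scale $|\lambda|\sim\Lambda$ to the output norm is controlled by a weighted $L^{2}_{\tau}\ell^{2}_{n}$ norm of $\widehat{v}(n,\tau)$ with effective weight $\Lambda^{b-1}$, which sums over $\Lambda$ to precisely the $X^{0,b-1}$ norm on the right-hand side. The main obstacle is the bookkeeping step of ensuring that the two high-modulation pieces can be square-summed in $\Lambda$ against $\|v\|_{X^{0,b-1}}$. This is where the hypothesis $b>\tfrac{1}{2}$ is essential, since the condition $b-1<-\tfrac{1}{2}$ is exactly what lets the Schwartz tails of $\widehat{\phi}$ be absorbed into the weight $\langle\tau'-n^{2}\rangle^{2b}$ after convolution, and also what makes the resulting $\lambda$-integral converge.
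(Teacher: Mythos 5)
Your argument is the standard proof of the inhomogeneous $X^{s,b}$ estimate, and it is essentially correct; the paper does not prove this lemma at all but simply cites it as classical (it goes back to \cite{bourgain1993fourier}), so your sketch fills in exactly the expected argument. Two small glosses: in the low-modulation regime the pointwise bound $|K(t;\lambda)|\lesssim |t|$ from the mean value theorem does not by itself give Schwartz decay of the Fourier transform — one should instead expand $(e^{i\lambda t}-1)/(i\lambda)=\sum_{k\geq 1}(i\lambda)^{k-1}t^{k}/k!$ and use that each $\phi(t)t^{k}$ is Schwartz with controllable norms; and the hypothesis $b>\tfrac12$ is not needed to absorb the Schwartz tails of $\widehat\phi$ (that works for any $b$), but only, as you also say, to make the $\lambda$-integral $\int_{|\lambda|\gtrsim 1}|\lambda|^{-2}\langle\lambda\rangle^{2(1-b)}\,d\lambda$ converge when Cauchy--Schwarz is applied to the piece of the kernel localized at output modulation zero.
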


Before ending this subsection, we also record the following localization properties of the $X^{s,b}$ space:
\begin{lem}\label{lem: xsblocaliztion}
Let $u\in X^{s,b}$, then
\begin{equation}\label{eq: loc}
\|\phi_\delta(t)u\|_{X^{s,b}}\lesssim_b \begin{cases} \|u\|_{X^{s,b}}, & 0<b<\frac{1}{2},\\ \delta^{\frac{1}{2}-b}\|u\|_{X^{s, b}}, & \frac{1}{2}<b<1.\end{cases}
\end{equation}Moreover, for all $0\leq b'\leq b<1/2$, there holds
\begin{equation}
\|\phi_\delta(t) u\|_{X^{s,b'}}\lesssim_{\epsilon } \delta^{b-b'-\epsilon}\|u\|_{X^{s,b}}.
\end{equation}
\end{lem}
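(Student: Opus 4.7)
The plan is to conjugate out the linear Schr\"odinger evolution and reduce to one-dimensional Sobolev estimates in time. Setting $v(t,x) := e^{-it\Delta_{\gamma}}u(t,x)$, the substitution $\tau\mapsto \tau+n^{2}$ on the Fourier side gives
\[
\|u\|_{X^{s,b}}^{2}\;=\;\sum_{n\in\mathbb{Z}^{2}}\langle n\rangle^{2s}\int_{\mathbb{R}}\langle\tau\rangle^{2b}|\hat{v}(n,\tau)|^{2}\,d\tau.
\]
Since multiplication by the purely time-dependent cutoff $\phi_\delta(t)$ commutes with the pointwise-in-$x$ conjugation $u\mapsto v$, Plancherel in $x$ reduces everything, frequency by frequency in $n$, to the following one-dimensional estimates for a scalar $g\in H^{b}_{t}(\mathbb{R})$: (a) $\|\phi_\delta g\|_{H^{b}_{t}}\lesssim_{b}\|g\|_{H^{b}_{t}}$ for $0<b<1/2$; (b) $\|\phi_\delta g\|_{H^{b}_{t}}\lesssim_{b}\delta^{1/2-b}\|g\|_{H^{b}_{t}}$ for $1/2<b<1$; and (c) $\|\phi_\delta g\|_{H^{b'}_{t}}\lesssim\delta^{b-b'-\epsilon}\|g\|_{H^{b}_{t}}$ for $0\leq b'\leq b<1/2$.

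Estimate (b) is immediate from the Banach algebra property of $H^{b}(\mathbb{R})$ valid for $b>1/2$, which gives $\|\phi_\delta g\|_{H^{b}}\lesssim \|\phi_\delta\|_{H^{b}}\|g\|_{H^{b}}$; a direct Fourier computation using $\widehat{\phi_\delta}(\tau)=\delta\,\hat\phi(\delta\tau)$ and the Schwartz decay of $\hat\phi$ produces $\|\phi_\delta\|_{H^{b}}^{2}\sim \delta^{1-2b}$, hence the factor $\delta^{1/2-b}$. Estimate (a) is the classical fractional Sobolev multiplier fact that multiplication by a fixed Schwartz function is uniformly bounded on $H^{b}(\mathbb{R})$ whenever $|b|<1/2$: one writes $\widehat{\phi_\delta g}=\hat\phi_\delta*\hat g$, uses the inequality $\langle\tau\rangle^{b}\lesssim \langle\tau-\sigma\rangle^{b}+\langle\sigma\rangle^{b}$ to split, and handles the resulting two pieces via a Littlewood--Paley/paraproduct decomposition of $\phi_\delta g$, exploiting the scale-invariance $\|\hat\phi_\delta\|_{L^{1}}=\|\hat\phi\|_{L^{1}}$; the restriction $b<1/2$ is precisely what prevents an extraneous $\delta^{1/2-b}$ loss from arising.

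For (c), I would interpolate (a) at $b'=b$ with the endpoint bound
\[
\|\phi_\delta g\|_{L^{2}}\;\leq\;\|\phi_\delta\|_{L^{1/b}}\|g\|_{L^{2/(1-2b)}}\;\lesssim\;\delta^{b}\|g\|_{H^{b}},
\]
which uses H\"older's inequality, the elementary identity $\|\phi_\delta\|_{L^{1/b}}=\delta^{b}\|\phi\|_{L^{1/b}}$, and the 1D Sobolev embedding $H^{b}\hookrightarrow L^{2/(1-2b)}$ valid for $0<b<1/2$. Complex interpolation of the target space between $L^{2}$ and $H^{b}$ then yields $\|\phi_\delta g\|_{H^{b'}}\lesssim \delta^{b-b'}\|g\|_{H^{b}}$ for $0\leq b'\leq b$, which is even slightly stronger than the claimed $\delta^{b-b'-\epsilon}$ (the $\epsilon$-loss in the statement is convenient slack that one rarely saturates).

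The main obstacle is the uniform bound (a) in the range $0<b<1/2$: a naive Gagliardo-seminorm attempt fails because the kernel $\int|\phi_\delta(t)-\phi_\delta(s)|^{2}|t-s|^{-1-2b}\,dt$ behaves like $\delta^{-2b}$ near $\mathrm{supp}\,\phi_\delta$, so the argument really requires the Fourier-side splitting together with the strict inequality $b<1/2$ to cancel the apparent loss. Everything else in the proof is formal manipulation, and analogous 1D Sobolev estimates are proved in, e.g., \cite{bourgain1993fourier}.
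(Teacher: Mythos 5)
Your proposal is essentially correct, and it actually supplies a proof where the paper supplies none: the paper states Lemma \ref{lem: xsblocaliztion} without proof and merely restates the first two bounds in Appendix \ref{asectimeloc} as ``basic facts,'' deferring to \cite{bourgain1993fourier}, \cite{caffarelli1999hyperbolic}. Your reduction to one-dimensional $H^{b}_{t}$ estimates by conjugating with $e^{-it\Delta_{\gamma}}$, the algebra-property computation for $b>1/2$ together with $\|\phi_\delta\|_{H^{b}}\sim\delta^{1/2-b}$, and the H\"older--Sobolev-plus-interpolation argument for the third estimate (which indeed yields the stronger $\delta^{b-b'}$ without the $\epsilon$) are all sound. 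The one place where the write-up is thinner than it should be is the uniform bound for $0<b<1/2$: the splitting $\langle\tau\rangle^{b}\lesssim\langle\tau-\sigma\rangle^{b}+\langle\sigma\rangle^{b}$ followed by Young's inequality handles the piece where the weight lands on $\hat g$ (via $\|\hat\phi_\delta\|_{L^{1}}=\|\hat\phi\|_{L^{1}}$), but applied literally to the other piece it gives $\|\langle\cdot\rangle^{b}\hat\phi_\delta\|_{L^{1}}\|\hat g\|_{L^{2}}\sim\delta^{-b}\|g\|_{L^{2}}$, i.e. exactly the loss you are trying to rule out, and invoking ``a paraproduct decomposition'' does not by itself explain how that loss is cancelled. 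The clean way to close this piece is the same mechanism you already deploy in part (c): by the fractional Leibniz rule the offending term is bounded by $\||\nabla|^{b}\phi_\delta\|_{L^{1/b}}\|g\|_{L^{2/(1-2b)}}$, where the first factor is scale-invariant (equal to $\||\nabla|^{b}\phi\|_{L^{1/b}}$) and the second is $\lesssim\|g\|_{H^{b}}$ by the 1D Sobolev embedding, which is valid precisely because $b<1/2$. With that substitution spelled out, your argument is complete and, in my view, preferable to the paper's bare citation.
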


\subsection{Deterministic Estimates}\label{sec:PreDet}
In this subsection, we collect several by now standard deterministic estimates. All of them  were introduced when studying  standard local theory of deterministic NLS on tori. We start with an estimate that exploits the time localization. One may refer to \cite{bourgain1993fourier}, \cite{caffarelli1999hyperbolic} for  proof. We provide a brief sketch of proof of the lemma in Appendix \ref{asectimeloc} for the convenience of the reader.

\begin{lem}\label{lem: smallpower}
Let $P_B$ be the Littlewood-Paley projection onto the spatial frequency ball $B$ of radius $N$  and $0<s\ll 1$. Then one has for all $\epsilon\ll s$ that
\begin{equation}\label{eq: xsbstr}
\|\phi(t)P_{B}u\|_{L_{t,x}^{4}}\lesssim_{\epsilon}N^{C\epsilon}\|u\|_{X^{0,1/2-\epsilon}},
\end{equation}
\begin{equation}\label{eqq: xsbstrloc}
\|\phi_\delta(t) (P_Bu)\|_{L_{t,x}^{4}}\lesssim_{\epsilon} N^{s+C\epsilon}\|u\|_{X^{0,\frac{1}{2}-\epsilon}}\delta^{\frac{s}{8}}.
\end{equation}
\end{lem}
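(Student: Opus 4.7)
The plan is to derive both estimates from the Strichartz bound (3.3) already established in the paper, combined with Hölder's inequality in time and a modulation decomposition. Estimate (3.6) comes essentially for free: applying (3.3) with $\epsilon$ replaced by $4\epsilon$ yields
\[
\|P_B u\|_{L^4_{t,x}([0,1]\times\mathbb{T}^2)}\lesssim N^{4\epsilon}\|u\|_{X^{0,1/2-\epsilon}},
\]
and since $\phi\in C_0^\infty(\mathbb{R})$ is supported in some $[-T,T]$, one has $\|\phi P_B u\|_{L^4_{t,x}(\mathbb{R}\times\mathbb{T}^2)}\lesssim\|P_B u\|_{L^4_{t,x}([-T,T]\times\mathbb{T}^2)}$. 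Partitioning $[-T,T]$ into a bounded number of unit subintervals and invoking the translation invariance of the $X^{0,1/2-\epsilon}$ norm on each piece then yields (3.6).

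For (3.7), the plan is to use Hölder in time to trade a $\delta$-gain for a small loss in the Strichartz exponent. Choose $q=4/(1-s/2)$, so that $1/p:=1/4-1/q=s/8$, and write
\[
\|\phi_\delta P_B u\|_{L^4_{t,x}}\leq \|\phi_\delta\|_{L^p_t}\|P_B u\|_{L^q_t L^4_x}\lesssim \delta^{s/8}\|P_B u\|_{L^q_t L^4_x}.
\]
To control the $L^q_t L^4_x$ norm I would decompose $u=\sum_M u_M$ with $u_M:=P_{|\tau-n^2|\sim M}u$. For each dyadic $M$, the time-Bernstein inequality (the time-Fourier support of each spatial mode of $u_M$ sits in an interval of length $\sim M$) followed by spatial Bernstein produces $\|P_B u_M\|_{L^\infty_t L^4_x}\lesssim N^{1/2}M^{1/2}\|u_M\|_{L^2_{t,x}}$, while representing $u_M(t,x)=\int_{|\lambda|\sim M}e^{i\lambda t}e^{it\Delta_\gamma}v_\lambda(x)\,d\lambda$ with $\widehat{v_\lambda}(n)=\widehat{u}(n,\lambda+n^2)$ and invoking (3.2) together with Cauchy-Schwarz in $\lambda$ yields $\|P_B u_M\|_{L^4_{t,x}}\lesssim N^\epsilon M^{1/2}\|u_M\|_{L^2_{t,x}}$. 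Interpolating these two endpoint bounds gives
\[
\|P_B u_M\|_{L^q_t L^4_x}\lesssim N^{s/4+\epsilon}M^{1/2}\|u_M\|_{L^2_{t,x}}.
\]

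The main obstacle is then the summation over the modulation scale $M$, since direct Cauchy-Schwarz against $\|u\|_{X^{0,1/2-\epsilon}}$ would require controlling the divergent series $\sum_M M^{2\epsilon}$. To circumvent this, I plan to truncate the sum to $M\leq N^C$ for a large $C=C(s,\epsilon)$, losing only $N^{C\epsilon}$, and handle the tail $M>N^C$ via the cruder Hausdorff-Young bound $\|P_B u_M\|_{L^4_{t,x}}\lesssim (MN^2)^{1/4}\|u_M\|_{L^2_{t,x}}$ combined with the elementary $\|u_M\|_{L^2_{t,x}}\leq M^{-(1/2-\epsilon)}\|u\|_{X^{0,1/2-\epsilon}}$, which makes the tail polynomially small in $N$ once $C$ is chosen large. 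Combining the two regimes yields (3.7).
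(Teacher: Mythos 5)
Your argument is correct, and it rests on the same mechanism as the paper's proof in Appendix \ref{asectimeloc}: Hölder in time converts the $\delta$-localization into a factor $\delta^{s/8}$ at the price of estimating $u$ in a norm slightly above the $L^4$ Strichartz exponent, and that supercritical estimate is what costs the $N^{s}$ derivatives. The execution, however, is genuinely different. The paper applies Hölder in the full space-time Lebesgue scale, writing $\|\phi_\delta u\|_{L^4}\le\|\phi_\delta u\|_{L^2}^{\theta_p}\|\phi u\|_{L^p}^{1-\theta_p}$ with $p>4$, extracting $\delta^{1/4}$ from the $L^2$ factor via $\|\phi_\delta u\|_{L^2}\le\delta^{1/4}\|\phi u\|_{L^4}$, quoting the $L^p$ Strichartz bound $\|\phi u\|_{L^p}\lesssim\|u\|_{X^{s_p+10\epsilon,1/2-\epsilon}}$, and finally reducing a ball $B$ centered at $n_0\neq 0$ to one centered at the origin by Galilean invariance. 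You instead take Hölder purely in the time variable against $\|\phi_\delta\|_{L^p_t}\sim\delta^{s/8}$ and construct the required $L^q_tL^4_x$ bound ($q>4$) from scratch: a modulation decomposition into blocks $P_{|\tau-n^2|\sim M}$, the transference principle to get the $L^4_{t,x}$ bound $N^\epsilon M^{1/2}\|u_M\|_{L^2}$ per block, a time--space Bernstein bound in $L^\infty_tL^4_x$, interpolation, and a truncation of the modulation sum at $M\le N^C$. Your route is more self-contained (it does not invoke $L^p$, $p>4$, Strichartz estimates with derivative loss) at the cost of extra bookkeeping in $M$, and it actually yields the slightly better exponent $N^{s/4+C\epsilon}$. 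One small point to tighten: for the tail $M>N^C$ you state the Hausdorff--Young bound in $L^4_{t,x}$, whereas after your initial Hölder step the quantity to be summed is $\|P_Bu_M\|_{L^q_tL^4_x}$ with $q>4$, which is not controlled by the global-in-time $L^4_{t,x}$ norm; interpolating that Hausdorff--Young bound with the $L^\infty_tL^4_x$ Bernstein bound you already derived gives $\|P_Bu_M\|_{L^q_tL^4_x}\lesssim N^{1/2}M^{-1/4+s/8+\epsilon}\|u\|_{X^{0,1/2-\epsilon}}$, which still sums over $M>N^C$ to a negative power of $N$, so this is a one-line repair rather than a genuine gap.
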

The number $1/8$ is not meant to be sharp, one can for example upgrade it to $1/4-$. 

Throughout the rest of the section, we write
\begin{equation}
f_{i}(x,t)=\sum_{n\in \mathbb{Z}^2}f_{i}(n,t)e^{in\cdot x}, \quad i=1,2,3,
\end{equation}
i.e. $f_{i}(n,t)$ is the space Fourier transform of $f_i$. For the sake of brevity, we abbreviate $f_{i}(n,t)$ as $f_{i}(n)
$.

We summarize below several standard estimates that will be frequently used in the proofs that will come  later. One may refer to 
\cite{bourgain1996invariant, bourgain1993fourier, caffarelli1999hyperbolic}.
We will also sketch them in Appendix B for the convenience of the reader.
\begin{lem}\label{lem: deter}
Let $N_{1}\gtrsim N_{2}, N_{3}$, $1\gg s_{1}\gg \epsilon_{0}$, and $\psi(t)$ be a Schwartz function. Decompose $P_{N_{1}}=\sum_{J\in \mathcal{J}}P_{J}$, where $J\in \mathcal{J}$ are finitely overlapping balls in the region $|n|\sim N_{1}$ with radius $\sim \max(N_{2},N_{3})$.
 then one has 
\begin{equation}\label{deter1}
\left|\int\psi(t)\bar{h}P_{N_{1}}f_{1}\overline{P_{N_{2}}{f}_{2}}P_{N_{3}}f_{3}\right|\lesssim (\max(N_{2},N_{3}))^{C\epsilon_{0}}\|h\|_{X^{0,1-b_{0}}}\prod_{i}\|P_{N_{i}}f_{i}\|_{X^{0,b_{0}}},
\end{equation}
\begin{equation}\label{deter2}
\left|\int \psi(t)\bar{h}\NN_{1}(P_{N_{1}}f_{1},P_{N_{2}}{f}_{2},P_{N_{3}}f_{3})\right|\lesssim(\max(N_{2},N_{3}))^{C\epsilon_{0}}\|h\|_{X^{0,1-b_{0}}}\prod_{i}\|P_{N_{i}}f_{i}\|_{X^{0,b_{0}}},
\end{equation}
\begin{equation}\label{deter1.1}
\left|\int\psi(t/\delta)\bar{h}P_{N_{1}}f_{1}\overline{P_{N_{2}}f_{2}}P_{N_{3}}f_{3}\right|\lesssim \delta^{s_{1}/8}(\max(N_{2},N_{3}))^{s_{1}+C\epsilon_{0}}\|h\|_{X^{0,1-b_{0}}}\prod_{i}\|P_{N_{i}}f_{i}\|_{X^{0,b_{0}}},
\end{equation}
\begin{equation}\label{deter1.2}
\left|\int\psi(t/\delta)\bar{h}\NN_{1}(P_{N_{1}}f_{1}, P_{N_{2}}{f}_{2},P_{N_{3}}f_{3})\right|\lesssim \delta^{s_{1}/8}(\max(N_{2},N_{3}))^{s_{1}+C\epsilon_{0}}\|h\|_{X^{0,1-b_{0}}}\prod_{i}\|P_{N_{i}}f_{i}\|_{X^{0,b_{0}}},
\end{equation}
\begin{equation}\label{deter3}
\begin{aligned}
&\left|\int \psi(t)\bar{h}\NN_{1}(P_{N_{1}}f_{1},P_{N_{2}}{f}_{2},P_{N_{3}}f_{3})\right|
\\
\lesssim &(\max(N_{2},N_{3}))^{C\epsilon_{0}}\|h\|_{X^{0,1/3}}\|\|f_{2}\|_{X^{0,1/3}}\|f_{3}\|_{X^{0,1/3}}\sup_{J}\|P_{J}f_{1}\|_{L_{t,x}^\infty}\\
+&{\bf 1}_{N_{1}\sim N_{2}}\|P_{N_{1}}f_{1}\|_{X^{0,b_{0}}}\|P_{N_{1}}f_{2}\|_{X^{0,1/3}}\|P_{N_{3}}f_{3}\|_{X^{0,1/3}}\|{{{P_{N_{3}}h}}}\|_{X^{0,1/3}}\\
 +&{\bf 1}_{N_{2}\sim N_{3}}\|P_{N_{1}}f_1\|_{X^{0,b_{0}}}\|P_{N_{2}}f_{2}\|_{X^{0,1/3}}\|P_{N_{2}}f_{3}\|_{X^{0,1/3}}\|P_{N_{1}}h\|_{X^{0,1/3}}
 \\
+&{\bf 1}_{N_{1}\sim N_{2}\sim N_{3}}\min_i\left(\|P_{N_{1}}h\|_{X^{0,1-b_{0}}}\|f_{i}\|_{X^{0,b_{0}}}\sup_{|n|\sim N_{1}}\prod_{j\neq i}\|f_{j}(n)e^{in\cdot x}\|_{X^{0,b_{0}}}\right),
\end{aligned}
\end{equation}
\begin{equation}\label{deter3.5}
\begin{aligned}
&\left|\int \psi(t)\bar{h}\NN_{1}(P_{N_{1}}f_{1},P_{N_{2}}{f}_{2},P_{N_{3}}f_{3})\right|
\\
\lesssim &N_{1}^{C\epsilon_{0}}\|h\|_{X^{0,1/3}}\|P_{N_{1}}f_{1}\|_{L_{t,x}^\infty}\|f_{2}\|_{X^{0,1/3}}\|f_{3}\|_{X^{0,1/3}}\\
+&{\bf 1}_{N_{1}\sim N_{2}}\|P_{N_{1}}f_{1}\|_{X^{0,b_{0}}}\|P_{N_{1}}f_{2}\|_{X^{0,1/3}}\|P_{N_{3}}f_{3}\|_{X^{0,1/3}}\|{\color{black}{P_{N_{3}}h}}\|_{X^{0,1/3}}\\
 +&{\bf 1}_{N_{2}\sim N_{3}}\|P_{N_{1}}f_1\|_{X^{0,b_{0}}}\|P_{N_{2}}f_{2}\|_{X^{0,1/3}}\|P_{N_{2}}f_{3}\|_{X^{0,1/3}}\|P_{N_{1}}h\|_{X^{0,1/3}}
 \\
+&{\bf 1}_{N_{1}\sim N_{2}\sim N_3}\min_i\left(\|P_{N_{1}}h\|_{X^{0,1-b_{0}}}\|f_{i}\|_{X^{0,b_{0}}}\sup_{|n|\sim N_{1}}\prod_{j\neq i}\|f_{j}(n)e^{in\cdot x}\|_{X^{0,b_{0}}}\right),
\end{aligned}
\end{equation}
\begin{equation}\label{deter4}
\begin{aligned}
&\left|\int \psi(t)\bar{h}\NN_{1}(P_{N_{1}}f_{1},P_{N_{2}}{f}_{2},P_{N_{3}}f_{3})\right|\\
\lesssim &(\max(N_{2},N_{3}))^{C\epsilon_{0}}
\|h\|_{X^{0,1/3}}\|P_{N_{2}}f_{2}\|_{L_{t,x}^\infty}\|P_{N_{1}}f_{1}\|_{X^{0,1/3}}\|P_{N_{3}}f_{3}\|_{X^{0,1/3}}\\
+&{\bf 1}_{N_{1}\sim N_{2}}
\|P_{N_{1}}f_{1}\|_{X^{0,1/3}}\|P_{N_{1}}f_{2}\|_{X^{0,b_{0}}}\|P_{N_{3}}f_{3}\|_{X^{0,1/3}}\|{\color{black}{P_{N_{3}}h}}\|_{X^{0,1/3}}
\\
+&{\bf 1}_{N_{2}\sim N_{3}}\|P_{N_{1}}f\|_{X^{0,1/3}}\|P_{N_{2}}f_{2}\|_{X^{0,b_{0}}}\|P_{N_{2}}f_{3}\|_{X^{0,1/3}}\|P_{N_{1}}h\|_{X^{0,1/3}}
\\
+& {\bf 1}_{N_{1}\sim N_{2}\sim N_3}\min_i\left(\|P_{N_{1}}h\|_{X^{0,1-b_{0}}}\|f_{i}\|_{X^{0,b_{0}}}\sup_{|n|\sim N_{1}}\prod_{j\neq i}\|f_{j}(n)e^{in\cdot x}\|_{X^{0,b_{0}}}\right),
\end{aligned}
\end{equation}
\begin{equation}\label{deter4.5}
\begin{aligned}
&\left|\int \psi(t)\bar{h}\NN_{1}(P_{N_{1}}f_{1},P_{N_{2}}{f}_{2},P_{N_{3}}f_{3})\right|\\
\lesssim &(\max(N_{2},N_{3}))^{C\epsilon_{0}}
\|h\|_{X^{0,1/3}}\|P_{N_{2}}f_{2}\|_{X^{0,1/3}}\|P_{N_{1}}f_{1}\|_{X^{0,1/3}}\|P_{N_{3}}f_{3}\|_{L_{t,x}^{\infty}}\\
+&{\bf 1}_{N_{1}\sim N_{2}}
\|P_{N_{1}}f_{1}\|_{X^{0,1/3}}\|P_{N_{1}}f_{2}\|_{X^{0,1/3}}\|P_{N_{3}}f_{3}\|_{X^{0,b_{0}}}\|{\color{black}{P_{N_{3}}h}}\|_{X^{0,1/3}}\\
+& {\bf 1}_{N_{2}\sim N_{3}}\|P_{N_{1}}f\|_{X^{0,1/3}}\|P_{N_{2}}f_{2}\|_{X^{0,1/3}}\|P_{N_{2}}f_{3}\|_{X^{0,b_{0}}}\|P_{N_{1}}h\|_{X^{0,1/3}}
\\
+& {\bf 1}_{N_{1}\sim N_{2}\sim N_3}\min_i\left(\|P_{N_{1}}h\|_{X^{0,1-b_{0}}}\|f_{i}\|_{X^{0,b_{0}}}\sup_{|n|\sim N_{1}}\prod_{j\neq i}\|f_{j}(n)e^{in\cdot x}\|_{X^{0,b_{0}}}\right),
\end{aligned}
\end{equation}
where with ${\bf 1}_{N_i\sim N_j}$ we denote the indicator ${\bf 1}_{N_i\sim N_j}=1$ if  $N_{i}\sim N_{j}$, $0$ otherwise. Moreover, estimate \eqref{deter3}, \eqref{deter3.5}, \eqref{deter4}, \eqref{deter4.5} are also valid if one replaces $\NN_{1}(P_{N_{1}}f_{1},P_{N_{2}}f_{2},P_{N_{3}}f_{3})$ by $P_{N_{1}}f_{1}\overline{P_{N_{2}}{f}_{2}}P_{N_{3}}f_{3}$.
\end{lem}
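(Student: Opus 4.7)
The plan is to reduce every bound in Lemma~\ref{lem: deter} to a H\"older-plus-Strichartz argument, after decomposing the highest-frequency factor $P_{N_1}$ into finitely overlapping Fourier balls of the intermediate scale. Concretely, for \eqref{deter1} and \eqref{deter2} I would write $P_{N_1}=\sum_{J\in\mathcal{J}}P_{J}$ with $J$ of radius $M:=\max(N_2,N_3)$, so that for each $J$ the product $P_{J}f_1\overline{P_{N_2}f_2}P_{N_3}f_3$ has spatial Fourier support in a ball $\widetilde J$ of radius $\lesssim M$. Pairing against $P_{\widetilde J}h$ and applying $L^{4}_{t,x}\cdot L^{4}_{t,x}\cdot L^{4}_{t,x}\cdot L^{4}_{t,x}$ H\"older, each factor costs only $M^{C\epsilon_0}$ by the $X^{s,b}$-Strichartz bound \eqref{eq: Xsbstri}, and summing in $J$ via Cauchy--Schwarz (using the almost orthogonality of the $P_J$ in $L^2_{t,x}$) yields \eqref{deter1}. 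For \eqref{deter2}, the Wick nonlinearity $\NN_1$ differs from the trilinear product only by removal of the diagonal slices $n_1=n_2$ and $n_2=n_3$, which can either be bounded separately by the same H\"older argument or absorbed since the modified sum is pointwise controlled by the full one up to a harmless permutation.

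For the time-localized versions \eqref{deter1.1} and \eqref{deter1.2}, I would run the identical argument but apply the localized Strichartz bound \eqref{eqq: xsbstrloc} to exactly one of the four $L^{4}_{t,x}$ factors, gaining $\delta^{s_1/8}M^{s_1}$ on that factor and absorbing $\psi(t/\delta)$ there; since $\psi$ multiplies the integrand globally, it can be attached to whichever factor is cheapest (for example the $\bar h$ or a single $P_J f_1$ slice), and the remaining three factors still only lose $M^{C\epsilon_0}$.

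The refined estimates \eqref{deter3}--\eqref{deter4.5} replace the four-$L^4$ split by an $L^{3}\cdot L^{3}\cdot L^{3}\cdot L^{\infty}$ H\"older, where the three $L^{3}_{t,x}$ factors are controlled via Lemma~\ref{lem: interpolate} and the designated factor is left in $L^{\infty}$. When $N_1 \gg N_2,N_3$, the ball decomposition of $P_{N_1}$ above produces the first (generic) term on the right-hand side. The indicator terms ${\bf 1}_{N_1\sim N_2}$ and ${\bf 1}_{N_2\sim N_3}$ arise because in these resonant regimes the ball decomposition is useless: the output frequency after multiplication is no longer essentially of size $N_1$, so one must instead localize $h$ to the same scale as the low paired factor and redistribute the $X^{0,b_0}$ versus $X^{0,1/3}$ roles accordingly. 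This case analysis is routine but bookkeeping-heavy, and the fact that the estimates of \eqref{deter3}--\eqref{deter4.5} are also valid for the plain trilinear product $P_{N_1}f_1\overline{P_{N_2}f_2}P_{N_3}f_3$ follows because we never use cancellation of the diagonal terms in the upper bound.

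The main obstacle is the genuinely resonant term ${\bf 1}_{N_1\sim N_2\sim N_3}\min_i\bigl(\|P_{N_1}h\|_{X^{0,1-b_0}}\|f_i\|_{X^{0,b_0}}\sup_{|n|\sim N_1}\prod_{j\neq i}\|f_j(n)e^{in\cdot x}\|_{X^{0,b_0}}\bigr)$, where all three frequencies are comparable and the $L^\infty/L^3$ distribution is insufficient. I would handle it by expanding $P_{N_1}f_i=\sum_{|n|\sim N_1}\widehat{f_i}(n,t)e^{in\cdot x}$, exploiting the orthogonality of the plane waves to decouple the spatial integral into a diagonal sum over one Fourier mode, and then applying Cauchy--Schwarz in $n$ together with the Strichartz/interpolation bounds mode by mode; the sup in $n$ emerges naturally because two of the three factors can be estimated pointwise in $n$ while one is summed in $\ell^2_n$ against $h$. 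Tracking the $\epsilon_0$-losses and verifying that this reduction is compatible with the $X^{s,b}$ localizations of Lemma~\ref{lem: xsblocaliztion} is the only delicate point; everything else is the by-now standard toolbox of \cite{bourgain1993fourier, bourgain1996invariant, caffarelli1999hyperbolic}.
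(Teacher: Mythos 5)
Your treatment of the generic (non-resonant) contributions coincides with the paper's: decompose $P_{N_1}$ into balls $J$ of radius $\max(N_2,N_3)$, pair $h$ with the correspondingly localized projection, apply four-fold $L^4_{t,x}$ (resp.\ $L^3\times L^3\times L^3\times L^\infty$) H\"older with \eqref{eq: Xsbstri}, \eqref{eq: interpo}, use \eqref{eqq: xsbstrloc} on a single factor to harvest $\delta^{s_1/8}(\max(N_2,N_3))^{s_1}$, and sum in $J$ by almost orthogonality. Your mode-by-mode argument for the fully resonant term (decouple the spatial integral onto a single frequency, take $L^\infty_t$ on two factors and $\ell^2_n L^2_t$ on the pairing with $h$, then Cauchy--Schwarz in $n$) is exactly the paper's Lemma \ref{lemma: N2}.

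There are, however, two genuine problems in how you pass from the plain trilinear product to $\NN_1$. First, the claim that the Wick-ordered sum is ``pointwise controlled by the full one up to a harmless permutation'' is false: deleting the Fourier slices $n_2=n_1$ and $n_2=n_3$ gives no pointwise comparison whatsoever, so this alternative cannot be used; the diagonal slices must be estimated on their own. Second, and relatedly, you misidentify the origin of the ${\bf 1}_{N_1\sim N_2}$ and ${\bf 1}_{N_2\sim N_3}$ terms in \eqref{deter3}--\eqref{deter4.5}: they are not a breakdown of the ball decomposition in resonant regimes (the first line of each estimate already covers the full product $P_{N_1}f_1\overline{P_{N_2}f_2}P_{N_3}f_3$ in all frequency configurations), but are precisely the subtracted diagonals $n_1=n_2$ and $n_2=n_3$, nonempty only when the corresponding dyadic scales are comparable. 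Each of these is a genuinely one-parameter object --- e.g.\ for $n_1=n_2$ the contribution is $\sum_{n_1,n_3}\int\psi\,f_1(n_1)\overline{f_2(n_1)}f_3(n_3)\overline{h(n_3)}$, with output frequency $n_3$, which explains why $h$ appears as $P_{N_3}h$ there --- and must be bounded by the same single-mode $L^\infty_t/L^2_t$ plus Cauchy--Schwarz device you reserve for the triple diagonal (this is what produces the $X^{0,b_0}$ versus $X^{0,1/3}$ distribution of norms in those lines, and the $\delta^{1/10}$ gain in the localized versions via $\|\psi(t/\delta)\|_{L^2_t}$). Since your proposal supplies no argument for these two-index diagonals, the second and third lines of \eqref{deter3}--\eqref{deter4.5}, and the passage from \eqref{deter1}, \eqref{deter1.1} to \eqref{deter2}, \eqref{deter1.2}, are left unproved as written; the fix is simply to run your final-paragraph argument on each diagonal slice separately.
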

Similarly, one also has 
\begin{lem}\label{lem: onemoredeter}
If $N_{2}\gtrsim N_{1}\geq N_{3}$, one has 
\begin{equation}\label{deter5}
\left|\int \phi(t/\delta)\bar{h}P_{N_{1}}f_{1}\overline{P_{N_{2}}f_{2}}P_{N_{3}}f_{3}\right|\lesssim \delta^{s_{1}/8}N_{1}^{s_{1}}N_{1}^{C\epsilon_{0}}\|h\|_{X^{0,1-b_{0}}}\prod_{i}\|P_{N_{i}}f_{i}\|_{X^{0,b_{0}}},
\end{equation}
\begin{equation}\label{deter6}
\left|\int \phi(t)\bar{h}P_{N_{1}}f_{1}\overline{P_{N_{2}}f_{2}}P_{N_{3}}f_{3}\right|\lesssim N_{1}^{C\epsilon_{0}}\|h\|_{X^{0,1/3}}\|P_{N_{i}}f_i\|_{L_{t,x}^{\infty}}\prod_{j\neq i}\|P_{N_{i}}f_{i}\|_{X^{0,1/3}}, \quad i=1,2,3.
\end{equation}
\end{lem}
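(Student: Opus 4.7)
The plan is to establish both estimates by Hölder's inequality combined with a frequency-ball decomposition adapted to the geometry $N_2\gtrsim N_1\geq N_3$, following the same blueprint used to prove Lemma \ref{lem: deter}. The key observation is that although $N_2$ is now the largest frequency, the Strichartz loss should depend only on the \emph{second} largest frequency, namely $N_1$. This is made possible by the constraint $n=n_1-n_2+n_3$ (with $n$ the spatial Fourier variable of $h$) imposed by the $x$-integration: since $|n_1|+|n_3|\lesssim N_1$, one has $|n+n_2|\lesssim N_1$, and in particular $|n|\sim N_2$, so that $n$ and $-n_2$ are coupled within a ball of radius $O(N_1)$.

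I first decompose
\begin{equation*}
P_{N_2} f_2 = \sum_{J\in\mathcal{J}} P_J f_2,
\end{equation*}
where $\mathcal{J}$ is a finitely overlapping cover of $\{|n_2|\sim N_2\}$ by balls of radius $\sim N_1$, and I define the dual balls $J' := -J + B(0, CN_1)$ in the annulus $\{|n|\sim N_2\}$. The frequency constraint forces the integral to select only the $P_{J'}h$ piece from $h$, and the collection $\{J'\}$ is again finitely overlapping. For \eqref{deter5}, on each piece I apply Hölder's inequality $L^4\cdot L^4\cdot L^4\cdot L^4\hookrightarrow L^1$ and invoke \eqref{eqq: xsbstrloc} from Lemma \ref{lem: smallpower} with parameter $s_1/4$ on each of the four factors $\phi(t/\delta)P_{J'}h$, $\phi(t/\delta)P_{N_1}f_1$, $\phi(t/\delta)P_J f_2$, $\phi(t/\delta)P_{N_3}f_3$, which produces the combined prefactor $\delta^{s_1/8}N_1^{s_1+C\epsilon_0}$. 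Summing over $J$ via Cauchy-Schwarz and the $L^2$-orthogonality
\begin{equation*}
\Bigl(\sum_J\|P_J f_2\|_{X^{0,b_0}}^2\Bigr)^{1/2}\lesssim \|P_{N_2}f_2\|_{X^{0,b_0}},\qquad \Bigl(\sum_J\|P_{J'}h\|_{X^{0,1-b_0}}^2\Bigr)^{1/2}\lesssim \|h\|_{X^{0,1-b_0}}
\end{equation*}
completes the proof of \eqref{deter5}. The proof of \eqref{deter6} is parallel but uses Hölder in the form $L^\infty\cdot L^3\cdot L^3\cdot L^3\hookrightarrow L^1$: one places $P_{N_i}f_i$ in $L^\infty$ and estimates the remaining three factors (each of which sits in a ball of radius $\lesssim N_1$) via the $L^3$-Strichartz \eqref{eq: interpo} from Lemma \ref{lem: interpolate}, producing the $N_1^{C\epsilon_0}$ loss with no time-localization gain since the cutoff is $\phi(t)$ rather than $\phi(t/\delta)$.

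The main obstacle, and the only non-routine bookkeeping, is verifying that the $J\mapsto J'$ correspondence is genuinely almost-orthogonal; this follows from the affine structure of the map and the finite overlap of $\mathcal{J}$. Without the ball decomposition one would incur a factor of $N_2^{s_1}$ (respectively $N_2^{C\epsilon_0}$) in place of $N_1^{s_1}$ (respectively $N_1^{C\epsilon_0}$), which would be fatal since $N_2$ can be arbitrarily larger than $N_1$. Additional care is needed for the case $i=2$ of \eqref{deter6}, where $P_{N_2}f_2$ itself is the $L^\infty$ factor: one uses the uniform bound $\sup_J\|P_J f_2\|_{L^\infty}\lesssim \|P_{N_2}f_2\|_{L^\infty}$ together with an $L^3/\ell^2$ Minkowski-type argument on $h$ to sum over $J$ without incurring the wasteful $N_2/N_1$ factor. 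Beyond these points everything is a direct repetition of the arguments already used to establish Lemma \ref{lem: deter}.
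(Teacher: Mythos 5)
Your proposal is correct and follows essentially the same route as the paper, which itself only remarks that Lemma \ref{lem: onemoredeter} "can be proved similarly as Lemma \ref{lem: deter}", i.e.\ H\"older plus the (localized) Strichartz bounds \eqref{eqq: xsbstrloc}, \eqref{eq: interpo} on balls of radius comparable to the \emph{second} largest frequency, combined with the $L^{2}$-orthogonality of the ball decomposition of the highest-frequency factor and of the dually localized pieces of $h$. The only place you are schematic is the $i=2$ case of \eqref{deter6}, where the summation over $J$ must be run through the standard $L^{4}$-biorthogonality/square-function extension of the localized Strichartz estimate to the thin annulus carrying $h$ (a naive $\ell^{1}$--$\ell^{2}$ Cauchy--Schwarz in $J$ would lose the factor $N_{2}/N_{1}$ you mention), but this is the same standard orthogonality step the paper itself invokes without proof.
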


We also record the following deterministic estimate which will (almost directly) handle the $\NN_{2}$ part in the Wick ordered nonlinearity.

\begin{lem}\label{lemma: N2}
Let $b_{0}=\frac{1}{2}+\epsilon_{0}$, then
\begin{equation}\label{deter8}
\left|\int \phi(t/\delta)\bar{h}\NN_{2}(P_{N_{1}}f_{1}, P_{N_{1}}f_{2}, P_{N_{1}}f_{3})\right|\lesssim 
\min_{i}\left(\delta^{1/4}\|{\color{black}{P_{N_{1}}h}}\|_{X^{0,1-b_{0}}}\|f_{i}\|_{X^{0,b_{0}}}\sup_{|n|\sim N_{1}}\prod_{j\neq i}\|f_{j}(n)e^{in\cdot x}\|_{X^{0,b_{0}}}\right).
\end{equation}
\end{lem}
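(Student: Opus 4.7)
The plan is to reduce by Plancherel in $x$, then apply Cauchy--Schwarz in $n$ together with H\"older in $t$, the only non-routine step being a ``modulation trick'' that converts the $X^{s,b}$ weight at a single spatial frequency into a standard Sobolev weight. By the diagonal structure of $\NN_{2}$, the spatial Fourier support of $\NN_{2}(P_{N_{1}}f_{1},P_{N_{1}}f_{2},P_{N_{1}}f_{3})$ is contained in $\{|n|\sim N_{1}\}$, so pairing with $\bar{h}$ is the same as pairing with $\overline{P_{N_{1}}h}$, and Plancherel yields
\[
I:=\int\phi(t/\delta)\bar{h}\,\NN_{2}(P_{N_{1}}f_{1},P_{N_{1}}f_{2},P_{N_{1}}f_{3})\,dxdt=-\sum_{|n|\sim N_{1}}\int_{\RRR}\phi(t/\delta)\overline{\widehat{P_{N_{1}}h}(n,t)}\,f_{1}(n,t)\overline{f_{2}(n,t)}f_{3}(n,t)\,dt.
\]
Since $f_{1},f_{2},f_{3}$ play symmetric roles (up to conjugation), it suffices to prove the bound with $i=1$ on the right; the other two choices follow by relabeling and taking the minimum.

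For $i=1$, pull the supremum over $n$ out of two of the four factors:
\[
\sum_{|n|\sim N_{1}}|\widehat{P_{N_{1}}h}(n,t)\,f_{1}(n,t)\,f_{2}(n,t)\,f_{3}(n,t)|\leq \Big(\sup_{|n|\sim N_{1}}|f_{2}(n,t)f_{3}(n,t)|\Big)\|P_{N_{1}}h(\cdot,t)\|_{L^{2}_{x}}\|P_{N_{1}}f_{1}(\cdot,t)\|_{L^{2}_{x}},
\]
and then integrate in $t$ using H\"older with exponents $(4,2,4,\infty)$. The three ``easy'' factors are absorbed as $\|\phi(\cdot/\delta)\|_{L^{4}_{t}}\lesssim \delta^{1/4}$, $\|P_{N_{1}}h\|_{L^{2}_{t,x}}=\|P_{N_{1}}h\|_{X^{0,0}}\leq \|P_{N_{1}}h\|_{X^{0,1-b_{0}}}$, and (by writing $v=e^{-it\lr}P_{N_{1}}f_{1}$, applying Minkowski, and using the 1D Sobolev embedding $H^{1/4}_{t}\hookrightarrow L^{4}_{t}$, valid because $b_{0}>1/4$) $\|P_{N_{1}}f_{1}\|_{L^{4}_{t}L^{2}_{x}}\lesssim \|f_{1}\|_{X^{0,b_{0}}}$.

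The main technical step is bounding $\big\|\sup_{|n|\sim N_{1}}|f_{2}(n,\cdot)f_{3}(n,\cdot)|\big\|_{L^{\infty}_{t}}$. Commuting the two suprema,
\[
\Big\|\sup_{|n|\sim N_{1}}|f_{2}(n,\cdot)f_{3}(n,\cdot)|\Big\|_{L^{\infty}_{t}}=\sup_{|n|\sim N_{1}}\|f_{2}(n,\cdot)f_{3}(n,\cdot)\|_{L^{\infty}_{t}}\leq \sup_{|n|\sim N_{1}}\|f_{2}(n,\cdot)\|_{L^{\infty}_{t}}\|f_{3}(n,\cdot)\|_{L^{\infty}_{t}}.
\]
For each fixed $n$, apply the modulation trick: since $|e^{-in^{2}t}|=1$, $\|f_{j}(n,\cdot)\|_{L^{\infty}_{t}}=\|e^{-in^{2}t}f_{j}(n,\cdot)\|_{L^{\infty}_{t}}\lesssim \|e^{-in^{2}t}f_{j}(n,\cdot)\|_{H^{b_{0}}_{t}}$ by the 1D Sobolev embedding (valid because $b_{0}>1/2$), and the right-hand side equals $\|f_{j}(n,t)e^{in\cdot x}\|_{X^{0,b_{0}}}$ because the frequency shift $\tau\mapsto\tau+n^{2}$ converts the weight $\langle\tau-n^{2}\rangle$ appearing in $X^{0,b_{0}}$ into the standard Sobolev weight $\langle\tau\rangle$. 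Collecting the four factors produces exactly the $i=1$ instance of the stated bound, and the minimum over $i\in\{1,2,3\}$ follows by symmetry. The main ``difficulty'' is really only conceptual: one must recognize that the frequency-diagonal nature of $\NN_{2}$ lets us bypass the usual bulk Strichartz and instead exploit single-frequency time regularity via the modulation identity, which is precisely what produces the product-of-$X^{s,b}$-single-mode structure on the right-hand side.
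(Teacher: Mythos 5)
Your proof is correct and follows essentially the same route as the paper's: both exploit the frequency-diagonal structure of $\NN_{2}$ to reduce to a single sum over $n$, then use the single-mode identifications $\|f_{j}(n,\cdot)\|_{L^{\infty}_{t}}\lesssim\|f_{j}(n,t)e^{in\cdot x}\|_{X^{0,b_{0}}}$ and $\|h(n,\cdot)\|_{L^{2}_{t}}\lesssim\|h(n,t)e^{in\cdot x}\|_{X^{0,1-b_{0}}}$ together with H\"older in $t$ on $\phi(t/\delta)$ and Cauchy--Schwarz in $n$. The only (harmless) difference is the distribution of H\"older exponents: the paper takes $\phi(t/\delta)$ in $L^{2}_{t}$ and $f_{1}(n)$ in $L^{\infty}_{t}$ (which actually yields $\delta^{1/2}$), whereas you take $\phi(t/\delta)$ in $L^{4}_{t}$ and $f_{1}$ in $L^{4}_{t}L^{2}_{x}$ via $H^{1/4}_{t}\hookrightarrow L^{4}_{t}$, landing exactly on the stated $\delta^{1/4}$.
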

We sketch the proofs of Lemma \ref{lem: deter}, \ref{lem: onemoredeter} and \ref{lemma: N2} in Appendix \ref{asecdeter} for the convenience of the reader. In the following, we provide a proof of the easier estimate in our main result Proposition \ref{prop: onemore}.

\begin{proof}[Proof of (\ref{eqn: mainest2}) of Proposition \ref{prop: onemore}]
We choose $N_{0}$ large, up to dropping a set of probability $e^{-N_{0}^{c_{s_{1}}}}$, we have
\begin{equation}
\begin{cases}
|g_{n}(\omega)|\leq N_{0}^{s_{1}}, |n|\leq N_{0},\\
|g_{n}(\omega)|\leq |n|^{s_{1}}, |n|\geq N_{0}.
\end{cases}
\end{equation}

And in particular, no matter whether $u_{i}=w_{i}$ such that $\|w_{i}\|_{X^{s_{0},b_{0}}}\lesssim 1$ or $u_{i}=\phi(t)\sum_{|n|\sim N_{i}}\frac{g_{n}(\omega)}{|n|}e^{in^{2}t}e^{inx}$, we always have
\begin{equation}
\begin{cases}
\|P_{N}u_{i}|_{X^{0,b_{0}}}\lesssim N_{0}^{s_{1}},  N\leq N_{0},\\
\|P_{N}u_{i}\|_{X^{0,b_{0}}}\lesssim N^{s_{1}}, |N|\geq N_{0},
\end{cases}
\end{equation}
Observe that
\begin{equation}
\int \phi(t/\delta)\bar{h}\NN_{2}(u_{1}, u_{2}, u_{3})=\sum_{N}\int \phi(t/\delta)P_{N}\bar{h}\NN_{2}(P_{N}u_{1}, P_{N}u_{2}, P_{N}u_{3})
\end{equation}

Let $\delta=N_{0}^{-100}$, applying estimate \eqref{deter8}, we have
\begin{itemize}
\item $N\leq N_{0}$
\begin{equation}
\int \phi(t/\delta)P_{N}\bar{h}\NN_{2}(P_{N}u_{1}, P_{N}u_{2}, P_{N}u_{3})\lesssim \delta^{1/4}N^{Cs_{1}},
\end{equation}
\item $N\geq N_{0}$,
\begin{equation}
\int \phi(t/\delta)P_{N}\bar{h}\NN_{2}(P_{N}u_{1}, P_{N}u_{2}, P_{N}u_{3})\lesssim N^{Cs_{1}}N^{-2},
\end{equation}
\end{itemize}
Sum all $N$, (when $s_{0}$ small enough),  and desired estimate follow.
\end{proof}

\subsection{Probabilistic estimates} 
We collect the elementary but crucial probabilistic estimates here.
\begin{lem}\label{lem: generalwiener}
Let $\{g_{n}(\omega)\}$ be i.i.d complex Gaussian on the probability space $\Omega$, and $\{c_{n_1,\ldots,n_k}\}$ be a sequence of complex numbers for some integer $k\geq 1$. Define
\[
F_{k}(\omega)=\sum_{n_1,\,\ldots,\,n_k} c_{n_{1},\ldots,n_k}g_{n_{1}}g_{n_2}\cdots g_{n_{k}}.
\]Then one has for all $1<p<\infty$
\begin{equation}
\|F_{k}\|_{L^{p}(\Omega)}\lesssim \sqrt{k+1}(p-1)^{k/2}\|F_{k}\|_{L^{2}(\Omega)}.
\end{equation}Moreover, there holds the associated large deviation type estimate
\begin{equation}
\PPP\{|F_{k}|>\lambda\}\leq \exp\left( \frac{-C\lambda^{2/k}}{\|F_{k}\|_{L^{2}(\Omega)}^{2/k}}\right),\qquad \forall \lambda>0.
\end{equation}
\end{lem}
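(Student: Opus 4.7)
The plan is to prove the moment bound by the Wiener chaos decomposition combined with Nelson's hypercontractivity inequality, and then deduce the large deviation bound by Markov's inequality and optimizing in $p$.

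Concretely, I would first decompose $F_k$ into its homogeneous Wiener chaos components $F_k = \sum_{j=0}^{k} I_j$, where $I_j$ lies in the $j$-th chaos generated by $\{g_n, \bar g_n\}$. This is done algebraically: whenever indices in $c_{n_1,\ldots,n_k} g_{n_1}\cdots g_{n_k}$ collide, one replaces products $|g_n|^{2}$ by $|g_n|^{2}-1+1$ (Wick reordering), so that $I_j$ is a sum of Wick monomials of degree exactly $j$. Because distinct Wiener chaoses are orthogonal in $L^{2}(\Omega)$, one has $\sum_{j}\|I_j\|_{L^2}^2=\|F_k\|_{L^2}^2$, and in particular $\|I_j\|_{L^2}\leq \|F_k\|_{L^2}$ for each $j\leq k$.

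Next, I would invoke Nelson's hypercontractivity inequality (equivalently, the moment estimate for Gaussian polynomial chaos), which states that for any $j$-th chaos element $I_j$ and any $p\geq 2$,
\begin{equation*}
\|I_j\|_{L^p(\Omega)}\leq (p-1)^{j/2}\|I_j\|_{L^2(\Omega)}.
\end{equation*}
Combining this with the previous step gives $\|I_j\|_{L^p}\leq (p-1)^{k/2}\|F_k\|_{L^2}$ for each $j\in\{0,\ldots,k\}$. To assemble the total $L^p$ bound with the sharper constant $\sqrt{k+1}$ rather than $k+1$, I would apply Cauchy--Schwarz on the discrete variable $j$: after first establishing the bound for even integers $p$ by hypercontractivity and then extending to all $p\in(1,\infty)$ by the Riesz--Thorin interpolation theorem (and $p\in(1,2)$ by duality), one exploits the $L^2$ Pythagorean identity to write
\begin{equation*}
\|F_k\|_{L^p}\leq \Bigl(\sum_{j=0}^{k}1\Bigr)^{1/2}\Bigl(\sum_{j=0}^{k}\|I_j\|_{L^p}^{2}\Bigr)^{1/2}\lesssim \sqrt{k+1}\,(p-1)^{k/2}\|F_k\|_{L^2},
\end{equation*}
using that the $I_j$ remain essentially orthogonal at the $L^p$ level up to the hypercontractive constants.

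For the large deviation estimate, I would apply Markov's inequality at an optimized exponent: for every $p\geq 2$,
\begin{equation*}
\mathbb{P}\{|F_k|>\lambda\}\leq \lambda^{-p}\|F_k\|_{L^p}^{p}\leq \Bigl(\tfrac{C\sqrt{k+1}\,(p-1)^{k/2}\|F_k\|_{L^2}}{\lambda}\Bigr)^{p}.
\end{equation*}
Choosing $p-1\sim (\lambda/\|F_k\|_{L^2})^{2/k}$ makes the quantity inside the parentheses equal to a fixed constant strictly less than $1$, which yields $\mathbb{P}\{|F_k|>\lambda\}\leq \exp(-C\lambda^{2/k}/\|F_k\|_{L^2}^{2/k})$ after absorbing the $\sqrt{k+1}$ and the endpoint factors into the constant. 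The regime $\lambda\lesssim \|F_k\|_{L^2}$ is trivial since the exponent on the right-hand side is $O(1)$.

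The only genuinely delicate step is the moment bound; the large deviation estimate is then automatic. Within the moment bound the main obstacle is the Wiener chaos decomposition of $F_k$ when repeated indices occur among $n_1,\ldots,n_k$, since then $F_k$ is not itself a homogeneous chaos element and one must carefully control the lower-order pieces produced by Wick reordering. Once this combinatorial bookkeeping is done, Nelson's inequality plus $L^2$-orthogonality of different chaoses delivers the stated constants.
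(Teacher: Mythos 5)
Your proof is correct: the Wiener chaos decomposition, Nelson's hypercontractivity, $L^2$-orthogonality of the chaoses, and the Markov-inequality optimization in $p$ constitute exactly the standard argument. The paper does not prove this lemma itself but defers to the cited references (Thomann--Tzvetkov and Tzvetkov), where the proof proceeds along precisely these lines, so your approach matches the intended one.
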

 In the lemma above, it is very important that $\{c_{n_{1}, \ldots ,n_{k}}\}$ are numbers instead of random variables.
One may refer to \cite{thomann2010gibbs}, \cite{tzvetkov2010construction}.

The following  lemma will also be frequently used.
\begin{lem}\label{lem: linfinity}
Let $\{g_{n}(\omega)\}$ be i.i.d complex Gaussian on the probability space $\Omega$, and assume that
\[
\sum_{n\in \ZZZ^{2}}|a_{n}|^{2}\lesssim 1.
\]
Then, for any integer $N>0$, up to a set of probability measure $e^{-N^{\alpha}}$ for some $\alpha>0$ depending on $\epsilon$, there holds
\begin{equation}
\left\|\sum_{n\in \ZZZ^{2},|n|\leq N}a_{n}g_{n}(\omega)e^{in\cdot x}\right\|_{L^{\infty}(\TTT^{2})}\lesssim N^\epsilon,\quad \forall \epsilon>0.
\end{equation}
\end{lem}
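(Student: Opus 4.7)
The plan is to deduce the $L^\infty$ bound from an $L^p$ bound with $p$ slowly growing in $N$, using the Sobolev embedding $W^{s,p}(\mathbb{T}^2) \hookrightarrow L^\infty(\mathbb{T}^2)$ whenever $sp > 2$, together with the Gaussian hypercontractivity already recorded as Lemma \ref{lem: generalwiener} (applied with $k=1$).

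Concretely, set $F(x,\omega) := \sum_{|n|\leq N} a_n g_n(\omega) e^{in\cdot x}$. For fixed $x\in\mathbb{T}^2$ and any $s\geq 0$, the random variable $(1-\Delta)^{s/2}F(x,\cdot)$ is a linear combination of i.i.d.\ Gaussians whose $L^2(\Omega)$ norm is controlled by
\[
\Bigl(\sum_{|n|\leq N}(1+|n|^2)^{s}|a_n|^2\Bigr)^{1/2} \lesssim N^{s},
\]
using $\sum |a_n|^2 \lesssim 1$. Applying Lemma \ref{lem: generalwiener} with $k=1$, then Fubini, gives for every $p\geq 2$
\[
\mathbb{E}\,\|(1-\Delta)^{s/2}F\|_{L^p(\mathbb{T}^2)}^p \;=\; \int_{\mathbb{T}^2}\|(1-\Delta)^{s/2}F(x,\cdot)\|_{L^p(\Omega)}^p\,dx \;\lesssim\; \bigl(C\sqrt{p}\,N^{s}\bigr)^{p}.
\]
By Markov's inequality, for any $\lambda>0$,
\[
\mathbb{P}\!\left(\|F\|_{W^{s,p}(\mathbb{T}^2)} > \lambda\right) \leq \left(\frac{C\sqrt{p}\,N^s}{\lambda}\right)^{p}.
\]

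It remains to choose $s$, $p$, and $\lambda$. Fix the target $\epsilon>0$ and take $s=\epsilon/2$ and $p=N^{\epsilon/4}$, so that $sp=\tfrac{\epsilon}{2}N^{\epsilon/4}>2$ once $N$ is large enough; then Sobolev embedding yields $\|F\|_{L^\infty}\lesssim \|F\|_{W^{s,p}}$. Choosing $\lambda = 2C\sqrt{p}\,N^s = 2C\,N^{\epsilon/8+\epsilon/2}$, which is bounded by $N^\epsilon$ for $N$ large, the Markov bound above becomes $2^{-p}=2^{-N^{\epsilon/4}}\leq e^{-N^{\alpha}}$ for $\alpha = \epsilon/4$ (and $N$ large). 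The conclusion $\|F\|_{L^\infty}\lesssim N^\epsilon$ off a set of measure $\leq e^{-N^\alpha}$ follows.

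The proof is essentially routine, so I do not expect any true obstacle; the only subtlety is matching the parameters so that the Sobolev threshold $sp>2$ is met while simultaneously keeping $\sqrt{p}\,N^s \lesssim N^\epsilon$ and still obtaining stretched-exponential probability decay. The scaling $p\sim N^{\epsilon/4}$, $s\sim\epsilon/2$ balances these three constraints, and the constant $\alpha$ in the statement can be taken to be any number strictly less than the chosen $\epsilon$.
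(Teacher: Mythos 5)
Your proof is correct, but it takes a genuinely different route from the paper. The paper discretizes $\mathbb{T}^2$ into a mesh of $\sim N^{2M}$ points, applies the pointwise large deviation estimate of Lemma \ref{lem: generalwiener} at each mesh point with a union bound, and then passes from the mesh to all of $\mathbb{T}^2$ by a crude bound on the gradient, $\|\nabla F\|_{L^\infty}\lesssim N^2\sup_n|g_n(\omega)|$, removing one further exceptional set. You instead compute $\mathbb{E}\|F\|_{W^{s,p}}^p$ by hypercontractivity (Lemma \ref{lem: generalwiener} with $k=1$) plus Fubini, apply Markov, and conclude via Sobolev embedding with $p\sim N^{\epsilon/4}$. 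Your argument is shorter, avoids the epsilon-net and the derivative estimate, and makes the dependence of $\alpha$ on $\epsilon$ transparent; the paper's mesh argument is more hands-on but reappears later (e.g.\ in Remark \ref{rem: matrixlinfty}), which is presumably why the authors set it up here. The one point you should make explicit is that the constant in $W^{s,p}(\mathbb{T}^2)\hookrightarrow L^\infty$ stays bounded as $p\to\infty$ with $s=\epsilon/2$ fixed: this holds because the embedding constant is controlled by $\|G_s\|_{L^{p'}}$ with $G_s$ the Bessel kernel, which converges to $\|G_s\|_{L^1}<\infty$ as $p'\downarrow 1$; alternatively, since $F$ is a trigonometric polynomial of degree $N$, Bernstein's inequality $\|F\|_{L^\infty}\lesssim N^{2/p}\|F\|_{L^p}$ with $N^{2/p}\to 1$ lets you skip the fractional derivative entirely. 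A last cosmetic remark: with your parameter choices the exponent you actually obtain is $\alpha$ slightly below $\epsilon/4$ rather than ``any number strictly less than $\epsilon$,'' but since the lemma only asks for some $\alpha>0$ this is immaterial.
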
 
\begin{proof}
It is easy to see that for any fixed $x$, the function is bounded as desired outside a small exceptional set, so the key point is to show that the exceptional set can be made independent of $x$. To do this, given $\epsilon>0$, first note that $\mathbb{T}^2$ can be covered by a mesh of size $1/ N^{M}\times 1/N^{M}$ centered at $\sim N^{2M}$ lattice points for a large number $M$ to be determined later. We first bound the function at the lattice points, which is easy as the function at each lattice point has size $\lesssim N^{\epsilon_0}$ (for some $\epsilon_0<\epsilon$) up to probability $ e^{-N^{\alpha(\epsilon_0)}}$ according to Lemma \ref{lem: generalwiener}, and there are only $N^{2M}$ many points. Therefore, one has that the function satisfies the desired bound outside an exceptional set of measure up to $\sim e^{-N^{\alpha-}}$.   

To pass from here to the bound of the function on the entire $\mathbb{T}^2$, it suffices to obtain a uniform control of the derivative of the function (independent of $x$):
\begin{equation}
N\sum_{|n|\leq N}|a_{n}||g_{n}(\omega)|\lesssim N^2\sup_{n} |g_{n}(\omega)|.
\end{equation} 
The probability of the derivative being larger than $N^{4}$ is smaller than $e^{-N^{2}}N^{2}$, as the probability of $|g_n(\omega)|>N^2$ for each $n$ is controlled by $e^{-N^2}$. Hence, by removing this additional exceptional set and recalling that every point $x$ lies within the distance of $1/N^{M}$ from some lattice point, one has that the function at $x$ is bounded by $N^{\epsilon_0}+N^4\cdot 1/N^M\lesssim N^\epsilon$ as long as $M$ is chosen sufficiently large. 
\end{proof}

%
%
\section{Counting lemma}\label{sec: counting}
One of the key ingredients in the proof of our main theorem is an extension of the lattice counting argument of Bourgain, \cite{bourgain1996invariant} to the irrational setting. We present them in this section. We start with two  auxiliary lemmata. The first has a geometric flavor, while the second is an elementary number theoretical result.

\begin{lem}\label{lem: circlepart}
Let $A$ be the $O(\frac{1}{N})$-neighborhood of a circle of radius $\sim N$, and $\Lambda=\mathbb{Z}\times \gamma\mathbb{Z}$ for some real number $\gamma\in (1,2)$. Suppose $A_1\subset A$ is the $O(\frac{1}{N})$ neighborhood of an arc of the circle of length $N_2\leq N$. Then, $A_1$ contains at most $\max\left(\frac{N_2}{N^{1/3}},1\right)$ points of $\Lambda$.
\end{lem}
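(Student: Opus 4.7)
The approach will be a classical sagitta-plus-area argument, adapted to the rectangular lattice $\Lambda = \mathbb{Z} \times \gamma\mathbb{Z}$. I plan to reduce the statement to the following key claim: any three distinct lattice points $P_1,P_2,P_3 \in \Lambda \cap A_1$ must have diameter $\gtrsim N^{1/3}$. Granting this, if I order the lattice points of $A_1$ along the arc as $Q_1,\ldots,Q_k$, then $|Q_i - Q_{i+2}| \gtrsim N^{1/3}$ for every admissible $i$, so the subsequence of odd-indexed points is $\gtrsim N^{1/3}$-separated inside an arc of length $N_2$, which forces $k \lesssim N_2/N^{1/3} + 1$; this is exactly $\max(N_2/N^{1/3},1)$ up to an absolute constant.

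For the key claim I will first treat the non-collinear case by an area comparison. Writing $\Lambda = M\mathbb{Z}^2$ with $M = \mathrm{diag}(1,\gamma)$ of determinant $\gamma > 1$, every non-degenerate triangle with vertices in $\Lambda$ has area in $(\gamma/2)\mathbb{Z}_{>0}$, hence at least $\gamma/2 > 1/2$. On the geometric side, I would take $P_1,P_3$ to be the extremal points when the triple is projected onto the arc, so $|P_1 P_3| \leq d := \mathrm{diam}\{P_1,P_2,P_3\}$; the sagitta estimate for a chord of length $\le d$ in a circle of radius $R \sim N$ (namely deviation $\lesssim d^2/R$), combined with the $O(1/N)$ annular thickness, gives $\mathrm{dist}(P_2, \overline{P_1 P_3}) \lesssim d^2/N + 1/N$. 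Hence $\mathrm{Area}(\triangle P_1 P_2 P_3) \lesssim d^3/N + d/N$, and comparing with the lower bound $\gtrsim 1$ forces $d \gtrsim N^{1/3}$ for $N$ large enough.

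The collinear case is handled separately, since the area argument degenerates. I would argue directly that for any line $L$ the set $L \cap A$ is a union of at most two intervals, each of length $\lesssim \sqrt{R/N} = O(1)$ (the worst case coming from near-tangent lines, which can be quantified by parametrising $L$ by the foot of perpendicular from the center of the circle). Since the shortest primitive vector of $\Lambda$ has length $1$ (namely $(1,0)$, as $\gamma>1$), consecutive collinear lattice points on $L$ are at distance $\ge 1$, so each such interval contains only $O(1)$ lattice points. Thus any single collinear configuration contributes a bounded number of points, which is absorbed into the $\max(\cdot,1)$ on the right-hand side.

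The main obstacle will be this collinear case: the area-sagitta argument is clean for non-degenerate triples, but collinear triples require the separate geometric analysis of near-tangent chords of the thickened circle and how many points of $\Lambda$ they can carry. Once both cases are in hand, combining them yields the key claim and hence the bound $\max(N_2/N^{1/3},1)$.
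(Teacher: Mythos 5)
Your argument is, at its core, the same as the paper's: the lower bound $\mathrm{Area}\geq\gamma/2>1/2$ for a non-degenerate lattice triangle in $\mathbb{Z}\times\gamma\mathbb{Z}$, played against the sagitta-type upper bound $\lesssim d^{3}/N+d/N$ for a triangle inscribed in the $O(1/N)$-thickened arc, plus the observation that a line meets the thin annulus in at most two segments of length $O(1)$ and hence carries $O(1)$ lattice points. The paper packages this by cutting the arc into pieces of angular size $\sim N^{-2/3}$ and showing each piece holds $O(1)$ points; you instead try to extract a separated subsequence. The one step that does not survive as written is the assembly: your ``key claim'' (every triple has diameter $\gtrsim N^{1/3}$) is false for collinear triples, and consequently the odd-indexed subsequence $Q_1,Q_3,Q_5,\dots$ need not be $N^{1/3}$-separated --- e.g.\ four nearly collinear lattice points in a near-tangent chord give $|Q_1-Q_3|=O(1)$. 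Saying the collinear configurations are ``absorbed into the $\max(\cdot,1)$'' does not repair this, since such configurations can recur on many different lines along the arc. The fix is routine: since any line carries at most $C$ points of $\Lambda\cap A$, among any $C+2$ consecutive points some triple is non-collinear, so the set $\{Q_i,\dots,Q_{i+C+1}\}$ has diameter $\gtrsim N^{1/3}$ and one passes to every $(C+1)$-st point instead of every other point (or simply adopts the paper's subdivision into $N^{1/3}$-length sub-arcs). With that adjustment your proof is complete and equivalent to the paper's.
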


\begin{proof}
Let $C\subset A$ be the $O(\frac{1}{N})$-neighborhood of any arc of the circle of angular size $\theta=\frac{1}{1000}N^{-2/3}$, then it suffices to show that $C$ contains at most $O(1)$ points from $\Lambda$. Indeed, if $N_2>N^{1/3}$, $A_1$ corresponds to an arc of angular size $\frac{N_2}{N}$, which can be decomposed into $\sim\frac{N_2}{N^{1/3}}$ smaller arcs each of which containing at most $O(1)$ points from $\Lambda$. 

Denote $B_1$ the circular sector bounded by the outer arc of $C$, and $B_2$ the triangle with vertices being the center of the circle and the two endpoints of the inner arc of C. Observe that any triangle $P_1P_2P_3$ with $P_i\in \Lambda\cap C$ must be contained in the region $B_1-B_2$. Moreover, it is easy to see that annulus $A$ can contain straight line of length at most $O(1)$. Therefore, suppose $C$ contains more than $O(1)$ points from $\Lambda$, then there must exist three points $P_1,P_2,P_3\in \Lambda\cap C$ that formulate a non-degenerate triangle. By definition of $\Lambda$, the area of the triangle is at least $\frac{1}{2}$, hence the area of $B_1-B_2$ needs to be at least $\frac{1}{2}$ as well.

On the other hand, via Taylor expansion, the area of $B_1-B_2$ is bounded by $\frac{1}{2}N^2(\theta-\sin\theta)+O(\frac{1}{N})N\theta\lesssim N^2\theta^3+\theta\leq \frac{1}{10}$, which is a contradiction. Therefore, $C$ must contain at most $O(1)$ points from $\Lambda$ and the proof is complete.
\end{proof}

\begin{lem}\label{lem:countpart}
Given an integer $M\neq 0$, then 
\[
\#\{(a,b)\in \mathbb{Z}\times \mathbb{Z}:\, ab=M\}\leq C_\epsilon |M|^\epsilon,\quad \forall \epsilon>0.
\]
\end{lem}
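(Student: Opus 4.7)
The claim is the classical divisor bound $d(|M|)\ll_\epsilon |M|^\epsilon$ in disguise, since each pair $(a,b)\in\mathbb{Z}\times\mathbb{Z}$ with $ab=M$ corresponds, up to the four sign choices $(\pm,\pm)$ subject to $\mathrm{sgn}(ab)=\mathrm{sgn}(M)$, to a pair of positive divisors of $|M|$ whose product equals $|M|$. So the count is at most $2\,d(|M|)$, and the problem reduces to showing $d(n)\le C_\epsilon n^\epsilon$ for all positive integers $n$.

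The plan is to use unique prime factorization. Writing $n=p_1^{a_1}\cdots p_k^{a_k}$, the number of positive divisors is the multiplicative function
\[
d(n)=\prod_{i=1}^{k}(a_i+1).
\]
I would fix $\epsilon>0$ and split the primes dividing $n$ into \emph{large} primes $p\geq 2^{1/\epsilon}$ and \emph{small} primes $p<2^{1/\epsilon}$. For a large prime $p$ and any exponent $a\geq 0$ one has $p^{a\epsilon}\geq 2^{a}\geq a+1$, so the factor $(a+1)$ is dominated pointwise by $p^{a\epsilon}$. For a small prime $p$, the real-variable function $a\mapsto (a+1)/p^{a\epsilon}$ attains a finite maximum $K_{p,\epsilon}$ on $[0,\infty)$, so $(a+1)\leq K_{p,\epsilon}\,p^{a\epsilon}$ for every $a\geq 0$.

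Combining the two bounds,
\[
d(n)\leq \Bigl(\prod_{p<2^{1/\epsilon}}K_{p,\epsilon}\Bigr)\prod_{i=1}^{k}p_i^{a_i\epsilon}=C_\epsilon\,n^{\epsilon},
\]
where $C_\epsilon$ is finite because the product runs over the finitely many primes below $2^{1/\epsilon}$. Reinstating the factor $2$ from the sign count completes the proof.

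There is really no main obstacle: the only point that needs care is the clean separation of small and large primes so that the implicit constant depends only on $\epsilon$ and not on $n$. Everything else is direct verification using $2^a\geq a+1$ and a single-variable calculus bound for each of the finitely many small primes.
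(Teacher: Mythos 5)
Your proof is correct and follows essentially the same route as the paper's: both reduce to the divisor bound via unique factorization and split the primes at a threshold ($2^{1/\epsilon}$, resp.\ the least $N$ with $N^\epsilon>2$), using $a+1\leq 2^a\leq p^{a\epsilon}$ for the large primes. The only difference is cosmetic: you absorb the finitely many small primes via a pointwise constant $K_{p,\epsilon}$ per prime, whereas the paper bounds their contribution by $(\log M)^{O_\epsilon(1)}$ and then treats small $M$ separately; your version is marginally cleaner but not a different argument.
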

\begin{proof}
Without loss of generality we assume that $M>0$. As an integer, $M$ has an unique representation by its prime factors:
\[
M=p_1^{r_1}p_2^{r_2}\cdots p_\ell^{r_\ell},\quad p_1<p_2<\cdots <p_\ell,\quad r_i>0,\,\forall i.
\]Then, the total number of pairs of integers whose product is $M$ is bounded by $\prod_{i=1}^\ell (r_i+1)$. For any fixed $\epsilon>0$, there exists a smallest integer $N$ such that $N^\epsilon>2$. Let $p_j$ be the first factor that is larger than $N$, then there holds
\[
\prod_{i=j}^\ell(r_i+1)\leq \prod_{i=j}^\ell 2^{r_i}<\prod_{i=j}^\ell p_i^{r_{i}\epsilon}<M^{\epsilon}.
\]On the other hand, there are only $O_\epsilon(1)$ many $p_i$ that are smaller than $N$. Therefore, write $M=e^m$, one has
\[
\prod_{i=1}^{j-1} (r_i+1) \leq (\log M)^{O_\epsilon(1)}=m^{O_\epsilon(1)}.
\]There exists a large number $M_0=M_0(\epsilon)$ so that $m^{O_\epsilon(1)}\leq e^{\epsilon m}$ whenever $M=e^m>M_0$, hence the desired estimate follows if $M>M_0$. If $M\leq M_0$, one can simply take $C_\epsilon=M_0^2$. The proof is complete.
\end{proof}

We now fix  $\mu$ and $N_1, N_2, N_3\geq 0$, and we let
\[
S:=\{(n_1,n_2,n_3):\, |n_i|\sim N_i,\, n_2\neq n_1, n_3,\, \text{and }\langle n_2-n_1,n_2-n_3\rangle=\mu+O(1)\}.
\]
We observe here   that in the  rational case $S$ is a curve while in the general case, since $+O(1)$ appears the set is thick. 

Define
\[
S(n_1)=\{(n_2, n_3):\, (n_1, n_2, n_3)\in S\},
\]and similarly for other $S(n_i)$, if $n_i$ is fixed and  $S(n_i, n_j)$, if $n_i, n_j$ are fixed. We have the following counting lemmata regarding the size of these sets. In the following, we sometimes use $N^1, N^2, N^3$ to denote $N_1, N_2, N_3$ rearranged in the non-increasing order and assume $\mu=O(N^1)$. 

\begin{lem}\label{lem: linecounting}
$\# S(n_1, n_2)\lesssim N_3$ and $\#S(n_2,n_3)\lesssim N_1$.
\end{lem}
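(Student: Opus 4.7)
The plan is to reduce both bounds to a one-dimensional lattice count in a thin strip determined by a single linear constraint. The two estimates are essentially symmetric -- the first fixes $(n_1, n_2)$ and varies $n_3$, while the second fixes $(n_2, n_3)$ and varies $n_1$ -- so I would prove $\# S(n_1, n_2) \lesssim N_3$ in detail and then note that $\# S(n_2, n_3) \lesssim N_1$ follows by an identical argument, invoking $n_2 \neq n_3$ (built into the definition of $S$) to guarantee a nonzero coefficient vector.

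First, I would fix $n_1, n_2 \in \mathbb{Z}^2$ and view the defining relation $\langle n_2 - n_1, n_2 - n_3\rangle = \mu + O(1)$ as an inhomogeneous linear constraint on $n_3$ with coefficient vector $n_2 - n_1 \in \mathbb{Z}^2 \setminus \{0\}$ (nonzero by the definition of $S$). Writing $n_2 - n_1 = (a, b)$ with $a, b \in \mathbb{Z}$ not both zero, and $n_3 = (x, y)$, the constraint unfolds to
\[
a x + \gamma b y = C + O(1)
\]
for a constant $C$ depending on the fixed data. The task is then to count integer pairs $(x, y)$ lying in the intersection of the disk $|n_3| \sim N_3$ with this one-dimensional strip.

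The main body of the proof is a short case split. If $a \neq 0$, then for each of the $O(N_3)$ admissible integer values of $y$, the variable $x$ is pinned to an interval of length $O(1/|a|) \leq O(1)$, contributing at most $O(1)$ integers per $y$, for a total of $O(N_3)$. If $a = 0$ then $b \neq 0$, and the constraint forces $y$ into an interval of length $O(1/(\gamma |b|)) \leq O(1)$ so $y$ takes at most $O(1)$ values, while $x$ ranges freely subject to $|n_3| \sim N_3$, again yielding $O(N_3)$. Either way $\# S(n_1, n_2) \lesssim N_3$, and the symmetric argument (with coefficient vector $n_2 - n_3 \neq 0$) gives $\# S(n_2, n_3) \lesssim N_1$.

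The only conceptual point worth flagging -- not really an obstacle here -- is ensuring that the $O(1)$ thickening in the defining relation is harmless. Integrality of $a, b$ (from $n_1, n_2 \in \mathbb{Z}^2$) together with $\gamma > 1$ guarantees $|a| \geq 1$ or $|\gamma b| \geq 1$ in the respective cases, so the strip has width bounded by a universal constant independent of $N_3$. This is why the bound is \emph{clean} and does not degrade in the irrational regime; the more delicate issues with $\gamma \notin \mathbb{Q}$ should arise in the subsequent counting lemmata, not in this one.
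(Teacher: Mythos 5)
Your proof is correct and follows essentially the same route as the paper: fixing two of the points, the constraint confines the third to an $O(1)$-width strip (a neighborhood of a line), which meets the ball $|n_3|\sim N_3$ (resp. $|n_1|\sim N_1$) in $O(N_3)$ (resp. $O(N_1)$) lattice points. Your explicit case split on whether the first coordinate of $n_2-n_1$ vanishes just makes the paper's one-line geometric observation concrete.
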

Compared to the rational case studied by Bourgain, this estimate is equally good, since ultimately it is a linear estimate.
\begin{proof}
We will only prove the first estimate, as the second one follows from the same argument. Fixing $n_1\neq n_2$, one has from
\[
\langle n_2-n_1,n_2-n_3\rangle=\mu+O(1)
\]that $n_3$ lies in an $O(\frac{1}{|n_1-n_2|})\leq O(1)$ neighborhood of a straight line. Since $|n_3|\sim N_3$, there are at most $\sim N_3$ choices of $n_3$.
\end{proof}

It is in the next lemmata that one sees a difference with respect to the estimates of Bourgain that are generated by the possible irrationality of the torus.
\begin{lem}\label{lem: circle}
Assume $N_1 \geq N_2, N_3$. Then, 
\[
\#S(n_1,n_3)\lesssim \begin{cases} N_2^{2/3}, & \text{if } N_1\sim N_3\gg N_2,\\ \max(\frac{N_2}{(N_1)^{1/3}},1), & \text{otherwise}.\end{cases}
\]
\end{lem}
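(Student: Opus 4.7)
The plan is to fix $n_1$ and $n_3$, view the defining equation of $S$ as a quadratic constraint on $n_2$, and reduce to counting lattice points near a circle so that Lemma \ref{lem: circlepart} applies. Completing the square in the $\gamma$-inner product with $c:=(n_1+n_3)/2$ and using the identity $\langle n_2-n_1,n_2-n_3\rangle = |n_2-c|^2 - |(n_1-n_3)/2|^2$, the condition becomes
\[
|n_2-c|^2 = R^2 + O(1),\qquad R^2:=\mu+|(n_1-n_3)/2|^2.
\]
For $R\gtrsim 1$ this confines $n_2$ to the $O(1/R)$-neighborhood of a $\gamma$-circle of radius $R$ centered at $c$; when $R=O(1)$ the constraint alone localizes $n_2$ to an $O(1)$-disk, contributing $O(1)$ lattice points. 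The linear change of variables $(a,b)\mapsto(a,\sqrt\gamma\,b)$ then replaces the $\gamma$-metric by the Euclidean one and turns $\mathbb{Z}^2$ into $\mathbb{Z}\times\sqrt\gamma\,\mathbb{Z}$; the proof of Lemma \ref{lem: circlepart} carries over verbatim since $\sqrt\gamma\in(1,\sqrt{2})$, and the annulus $\{|n_2|\sim N_2\}$ is preserved up to constants.

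Next I would bound the arc length $L$ of the Euclidean circle $|n_2-c|=R$ lying inside $\{|n_2|\sim N_2\}$. Applying the law of cosines to the triangle with vertices $0$, $c$, and an arc endpoint on $\{|n_2|=\rho\}$ (with $\rho\sim N_2$) gives
\[
\sin^2(\alpha/2) = \frac{\rho^2-(|c|-R)^2}{4|c|R},
\]
where $\alpha$ is the half-angle at $c$. A case split on whether $|c|\gtrsim R$ or not, combined with the non-emptiness requirement $\bigl||c|-R\bigr|\lesssim N_2$, yields $L\lesssim\min(R,N_2)$. Lemma \ref{lem: circlepart} then bounds the number of lattice points on the arc by $\max(L/R^{1/3},1)$, which is $\lesssim\max(N_2/R^{1/3},1)$ when $R\gtrsim N_2$ and $\lesssim\max(R^{2/3},1)$ when $R\lesssim N_2$.

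To finish, I would track the actual size of $R$ in each regime of the lemma using the identity
\[
|c|^2+R^2 = \tfrac12\bigl(|n_1|^2+|n_3|^2\bigr)+\mu \asymp N_1^2,
\]
together with the non-degeneracy constraint $\bigl||c|-R\bigr|\lesssim N_2$. In the case $N_1\sim N_3\gg N_2$ these two relations force $R\sim N_1$, so the count is $\lesssim\max(N_2/N_1^{1/3},1)\leq N_2^{2/3}$ using $N_2\leq N_1$. In the otherwise regime, either $N_1\gg N_3$, in which case $|c|\sim N_1$ and the intersection condition forces $R\sim N_1$, giving $\max(N_2/N_1^{1/3},1)$ directly; or $N_1\sim N_3$ with $N_2\gtrsim N_3$, which forces $N_1\sim N_2\sim N_3$, and then one checks $\max(R^{2/3},N_2/R^{1/3},1)\lesssim N_1^{2/3}\sim \max(N_2/N_1^{1/3},1)$ for every admissible $R$.

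The main obstacle is the geometric bound $L\lesssim\min(R,N_2)$: one must argue uniformly across the configurations in which the origin sits outside, inside, or essentially on the circle $|n_2-c|=R$, and show in particular that even when $R$ is much larger than $N_2$ the portion of the circle cut out by the thin origin-centered annulus is never much longer than $N_2$. Once this geometric step is in hand, the two claimed bounds follow algebraically from $|c|^2+R^2\asymp N_1^2$ and the non-degeneracy constraint $\bigl||c|-R\bigr|\lesssim N_2$.
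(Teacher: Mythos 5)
Your proof is correct and follows essentially the same route as the paper: complete the square to place $n_2$ in a $\gamma$-annulus of radius $R$ and thickness $O(1/R)$, bound the arc inside $\{|n_2|\sim N_2\}$ by $\min(R,N_2)$, invoke Lemma \ref{lem: circlepart}, and case on the size of $R$ (the paper is terser but identical in substance; your explicit change of variables to $\mathbb{Z}\times\sqrt{\gamma}\,\mathbb{Z}$ and the relation $|c|^2+R^2\asymp N_1^2$ are useful clarifications the paper leaves implicit). Two cosmetic slips worth fixing: in the sub-case $N_1\gg N_3$ with $N_2\sim N_1$ the intersection condition $\bigl||c|-R\bigr|\lesssim N_2$ is vacuous, so $R\sim N_1$ should instead be read off directly from $R^2=\mu+|(n_1-n_3)/2|^2\gtrsim N_1^2$; and in the last paragraph the quantity to check is $\max\bigl(\min(R^{2/3},N_2/R^{1/3}),1\bigr)$ rather than the maximum of both expressions, since $N_2/R^{1/3}$ is only the operative bound when $R\gtrsim N_2$ (for small $R$ it exceeds $N_1^{2/3}$, but is irrelevant there).
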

\begin{proof}
From the definition of the set $S$, with $n_1, n_3$ fixed, $n_2$ must lie in an annulus given by the formula
\[
\left|n_2-\frac{n_1+n_3}{2}\right|^2=\frac{|n_1-n_3|^2}{4}+\mu+O(1).
\]Denote the inner and outer radius of the annulus by $R_1,R_2$ respectively and recall that $\mu\lesssim N_1$. 

Therefore, when $N_1\gg N_3$, both the inner and outer radius are roughly $\sim \sqrt{(N_1)^2+\mu}\sim N_1$. In order to determine the thickness of the annulus, one observes from $R_1^2-R_2^2=O(1)$ and $R_1, R_2\sim N_1$ that there holds $R_1-R_2\leq O(\frac{1}{N_1})$, hence the thickness is bounded by $O(\frac{1}{N_1})$. Then, the desired estimate $\max(\frac{N_2}{N_1^{1/3}},1)$ follows immediately from $|n_2|\sim N_2\leq N_1$ and Lemma \ref{lem: circlepart} above.

When $N_1\sim N_3$, assume that the inner and outer radius are roughly $\sim R\gg 1$ (if $R\leq O(1)$, the estimate is trivial). Note that $R\lesssim N_1$. Then $n_2$ lies inside an annulus of radius $\sim R$ and thickness bounded by $O(\frac{1}{R})$. Suppose $N_1\sim N_2\sim N_3$, then again by Lemma \ref{lem: circlepart} above, the total number of $n_2$ is bounded by 
\[\max\left(\frac{R}{R^{1/3}},1\right)=\max(R^{2/3},1)\lesssim \max(N_1^{2/3},1)\sim \max\left(\frac{N_2}{N_1^{1/3}},1\right).\]

On the other hand, if $N_1\sim N_3\gg N_2$, still denoting $R$ as roughly the inner and outer radius of the annulus, one has 
\[
\#S(n_1,n_3)\lesssim \max\left(\frac{\min(N_2,R)}{R^{1/3}},1\right)\lesssim N_2^{2/3}.
\]
\end{proof}

Lemma \ref{lem: circle} above can be extended to estimate other sets of similar type. For example, let $n:=n_1-n_2+n_3$ and suppose that $N_1=N^1$. Then for any fixed $n_2$ and $n$, via a similar argument, one has the following estimate:
\[
\#\{n_1:\, |n_i|\sim N_i,\, n_2\neq n_1,n_3,\, \text{and }\langle n_1-n, n_2-n_1\rangle=\mu+O(1)\}\lesssim N_1^{2/3}.
\]

Indeed, suppose $N_1\gg N_2$, then $n_1$ in the above lies in an annulus of radius $\sim N_1$ and thickness $\sim O(\frac{1}{N_1})$. Hence by Lemma \ref{lem: circlepart}, the total number is at most $\max(\frac{N_1}{N_1^{1/3}},1)=N_1^{2/3}$. Otherwise, if $N_1\sim N_2$, one has $R$, the radius of the annulus, is bounded by $N_1$. Hence, the total possible number of $n_1$ is at most $\lesssim R^{2/3}\lesssim N_1^{2/3}$.

Similarly, when $N_1\gg N_2$, one also has the following counting
\[
\#\{n_{3}:\, \langle n_{3}-n_{2}, n_{3}-n\rangle=\mu+O(1)\}\lesssim \max(\frac{N_3}{N_1^{1/3}},1).
\]

Moreover, from the two lemmata above, one can already obtain some estimate for sets $S(n_i)$. For instance, by first fixing $n_2$ and applying Lemma \ref{lem: linecounting}, one can show that $\#S(n_1)\lesssim N_2^2N_3$. Depending on the relative sizes the  $N_i$, sometimes such estimates are already good enough. However, in some other cases one needs 
to use a  more sophisticated argument, and this is the contents of the following counting lemma.

\begin{lem}\label{lem: count strong}
$\# S(n_1)\lesssim (N^1)^\epsilon N_2N_3$, $\# S(n_2)\lesssim (N^1)^\epsilon N_1N_3$, and $\# S(n_3)\lesssim (N^1)^\epsilon N_1N_2$.
\end{lem}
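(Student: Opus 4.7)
The three estimates share the same proof, so I focus on $\#S(n_1) \lesssim (N^1)^\epsilon N_2 N_3$; the other two follow by permuting the roles of the coordinates. The strategy is to convert the resonance condition $\langle n_2 - n_1, n_2 - n_3 \rangle_\gamma = \mu + O(1)$, which lives on the real line, into an integer factorization problem that can be controlled by the divisor bound of Lemma \ref{lem:countpart}. Writing $n_i = (n_{i,1}, n_{i,2}) \in \mathbb{Z}^2$, introduce the integers
\begin{equation*}
A := (n_{2,1} - n_{1,1})(n_{2,1} - n_{3,1}), \qquad B := (n_{2,2} - n_{1,2})(n_{2,2} - n_{3,2}),
\end{equation*}
so that the defining relation becomes $A + \gamma B = \mu + O(1)$.

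With $n_1$ fixed, I parameterize the enumeration by the pair $(n_{2,2}, n_{3,2})$. Since $|n_i| \sim N_i$ and $\gamma \in (1,2)$, one has $|n_{i,2}| \lesssim N_i$, so there are at most $\lesssim N_2 N_3$ such pairs. For each pair, $B \in \mathbb{Z}$ is determined, and $A$ must then be an integer in a real interval of length $O(1)$, giving $\lesssim 1$ admissible value. Provided $A \neq 0$, Lemma \ref{lem:countpart} bounds the number of factorizations $A = pq$ in $\mathbb{Z} \times \mathbb{Z}$ by $|A|^\epsilon \leq (N^1)^{2\epsilon}$, and each factorization recovers $(n_{2,1}, n_{3,1})$ uniquely from $n_1$ via $p = n_{2,1} - n_{1,1}$ and $q = n_{2,1} - n_{3,1}$. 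Summing over $(n_{2,2}, n_{3,2})$ yields the desired bound $\lesssim (N^1)^\epsilon N_2 N_3$ in the generic case.

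The degenerate cases $A = 0$ or $B = 0$ are handled separately, since Lemma \ref{lem:countpart} does not apply. The case $A = B = 0$ would generically force $n_2 = n_1$ or $n_2 = n_3$, both excluded from $S$, leaving only ``mixed'' configurations such as $n_{2,1} = n_{1,1}$ together with $n_{2,2} = n_{3,2}$; a direct count shows these contribute at most $\lesssim N_2 N_3$. The case where exactly one of $A, B$ vanishes is likewise elementary: the vanishing factor forces $\lesssim N_2 + N_3$ pairs of coordinates, the nonvanishing one admits $\lesssim (N^1)^\epsilon$ factorizations, and the total is absorbed into $(N^1)^\epsilon N_2 N_3$. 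The bounds for $\#S(n_2)$ and $\#S(n_3)$ follow by the identical scheme, parameterizing instead via $(n_{1,2}, n_{3,2})$ and $(n_{1,2}, n_{2,2})$ respectively.

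The main conceptual subtlety is reconciling the $O(1)$ slack in the resonance with the irrationality of $\gamma$: an exact identity $A + \gamma B = \mu$ would generically admit no integer solutions at all, but the slack matches precisely the rounding error from the real equation to an integer one, giving $O(1)$ admissible $A$ per $B$ with no dependence on $\gamma$. This is what makes the divisor bound applicable uniformly in $\gamma$ and removes Bourgain's rationality hypothesis, at the cost of the harmless subpolynomial factor $(N^1)^\epsilon$.
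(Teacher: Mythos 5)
Your proof is correct and follows essentially the same strategy as the paper: fix the second coordinates so that the integer $B$ is determined, use the $O(1)$ slack in the resonance to pin the integer $A$ to $O(1)$ values, and invoke the divisor bound (Lemma \ref{lem:countpart}) to recover the first coordinates, with the vanishing cases $A=0$ or $B=0$ treated separately by direct counting exactly as the paper does for $a=0$ or $b=0$. Your version is in fact a streamlined variant — by enumerating $(n_{2,2},n_{3,2})$ directly you avoid the paper's extraction of the gcd $r$ from $n_2-n_1$, the dyadic decomposition in $A,B$, and the second application of the divisor bound to $H=by$, but the underlying mechanism is identical.
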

\begin{proof}
We only prove the estimate of $\# S(n_1)$ as the other two can be treated very similarly. Write $n_2-n_1=r(a, \gamma b)$, where $r\in\mathbb{N}$, $a, b\in \mathbb{Z}$ and $(a,b)=1$. Decompose all choices of $n_2$ into dyadic scales. In other words, at each scale, we have dyadic number $A, B\in\mathbb{Z}$ fixed such that $|a|\sim A$, $|b|\sim B$, and there holds $A, B\lesssim \max(N_1, N_2)$. We also write $n_2-n_3=(x, \gamma y)$, $x,y\in\mathbb{Z}$.

Assume $a,b\neq 0$ and fix $A, B, r$. We want to count the number of $(a,b,x,y)$ satisfying
\[
r(ax+\gamma^2by)=\mu+O(1),\quad \text{and }|a|\sim A, \, |b|\sim B.
\]Note that $r\neq 0$ because $n_1\neq n_2$, and $x,y$ cannot both be zero as $n_2\neq n_3$. Without loss of generality, suppose $y\neq 0$, then the equality above can be rewritten as
\[
\frac{ax}{by}+\gamma^2=\frac{\mu}{byr}+O(\frac{1}{|byr|}).
\]Since $a,b,x,y\in\mathbb{Z}$, for any fixed value $H=by$, the value of $ax$ is inside an $O(1)$-neighborhood of an integer $G=G(H)$.

Moreover, observe that the number of possible values of $H$ is bounded by $\sim BN_3$, as for each fixed $b$ (hence the second coordinate of $n_2$ is fixed) there are $\sim N_3$ many choices of $y$. Then, by a simple number theory observation (Lemma \ref{lem:countpart} below) one has for any $\epsilon>0$ that
\[
\#(a,b,x,y)\lesssim \#\{H\}\cdot |H|^\epsilon \cdot |G|^\epsilon\lesssim BN_3 (B\max(N_2,N_3))^\epsilon (A\max(N_2,N_3))^\epsilon\lesssim \max(A,B)^{1+\epsilon}N_3\max(N_2,N_3)^\epsilon.
\]

It is thus left to sum over $r$ and then $A,B$. Note that for fixed $A, B$,
\[
\# r\lesssim\frac{N_2}{\max(A,B)},
\]therefore, one has in this case that
\[
\# S(n_1)\lesssim \sum_{A, B} \frac{N_2}{\max(A,B)} \max(A,B)^{1+\epsilon}N_3\max(N_2,N_3)^\epsilon\lesssim \log(N^1)^2 (N^1)^\epsilon N_2 N_3\lesssim (N^1)^\epsilon N_2N_3.
\]

Assume now that $a=0$ (then $b\neq 0$ as $n_2\neq n_1$). This means $n_1,n_2$ have the same first coordinate, hence the total number of choices of $n_2$ is bounded by $N_2$. Moreover, one has that the first coordinate of $n_3$ is free and its second coordinate is determined by
\[
\langle n_2-n_1, n_2-n_3\rangle=\mu+O(1),
\]hence is inside an $O(1)$-neighborhood of a determined value. Indeed, the formula above can be written as
\[
r\gamma^2 by=\mu+O(1)
\]which implies
\[
y=\frac{\mu}{r\gamma^2b}+O\left(\frac{1}{|r\gamma^2b|}\right).
\]Therefore, in this case one has $\# S(n_1)\lesssim N_2N_3$. The $b=0$ case can be treated in the same way which we omit.
\end{proof}

\section{Proof of Proposition \ref{prop: onemore}: case by case study, case 0}\label{sec: ProofPartI}
In this section, we treat the  case: $N_2 \gg N_1\geq N_3$.   We will  prove
\begin{equation}\label{eq: mainestimate0}
\left|\sum_{N_{2}\gg N_{1}\geq N_{3}}N_{2}^{s_{0}}\int \NN_{1}(P_{N_{1}}u_{1}, P_{N_{2}}u_{2},P_{N_{3}}u_{3})\bar{h}\phi(t/\delta)\right|\lesssim \delta^{\epsilon_{1}}, \text{ for some }  \epsilon_{1}\gg \epsilon_{0},
\end{equation}
where  $u_i$ is either $w_i$ with $\|w_i\|_{X^{s_0,b_0}}\leq 1$ or
\[
u_i=\phi(t)e^{it\Delta_\gamma}u_0=\phi(t)e^{it\Delta_\gamma}\sum_{n}\frac{g_n(\omega)}{|n|}e^{i n\cdot x},\quad x\in \mathbb{T}^2,
\]and $\|h\|_{X^{0,1-b_{0}}}\leq 1$.

First observe that, in this case, the Wick ordered nonlinearity is the same as the usual cubic nonlinearity, i.e. 
$\NN=\NN_{1}$ and $ \NN_{2}=0$. 
We only need to prove (up to an exceptional set of probability $e^{-\delta^{-c}}$) that
\begin{equation}
\left|\sum_{N_{2}\gg N_{1}\geq N_{3}}N_{2}^{s_{0}}\int_{\RRR\times \TTT^{2}}P_{N_{1}}u_{1}\overline{P_{N_{2}}{u}_{2}}P_{N_{3}}u_{3}\bar{h}\phi(t/\delta)\right|\lesssim \delta^{\epsilon_{1}} \text{ for some } \epsilon_{1}\gg \epsilon_{0}.
\end{equation}

There are several subcases.
We start with \textbf{subcase 1} : $u_{i}=w_{i}, i=1,2,3$. 
 Let $s_{1}$ be chosen such that $\epsilon_{0}\ll s_{1}\ll s_{0}$. Via \eqref{deter5} and observing that one has $h=P_{N_2}h$ in the integral in this case, we obtain
\begin{equation}
\begin{aligned}
&\left|\int_{\RRR\times \TTT^{2}}P_{N_{1}}u_{1}\overline{P_{N_{2}}{u}_{2}}P_{N_{3}}u_{3}\bar{h}\phi(t/\delta)\right|\\
\lesssim 
&N_{1}^{2s_{1}}\|P_{N_{1}}u_{1}\|_{X^{0,b_{0}}}\|P_{N_{2}}u_{2}\|_{X^{0,b_{0}}}\|P_{ N_{3}}u_{3}\|_{X^{0,b_{0}}}\|P_{N_{2}}h\|_{X^{0,1-b_{0}}}\delta^{\frac{s_{1}}{8}}\\
\lesssim &N_{1}^{-s_{0}/2}\|u_{1}\|_{X^{s_{0},b_{0}}}\|u_{3}\|_{X^{s_{0},b_{0}}}\|P_{N_{2}}u_{2}\|_{X^{0,b_{0}}}\|P_{N_{2}}h\|_{X^{0,b_{0}}}.
\end{aligned}
\end{equation}
Sum over $N_{2}\gg N_{1}\geq N_{3}$, and the desired estimate follows.

Next we discuss \textbf{subcase 2}: at least one $u_{i}$ is $\phi(t)\sum_{|n_{i}|\sim N_{i}}\frac{g_{n_{i}}}{|n_{i}|}e^{in_{ix}}e^{in_{i}^{2}t}$.  
We only study the case $u_{1}=\phi(t)\sum_{|n_{1}|\sim N_{1}}\frac{g_{n_{1}}}{|n_{1}|}e^{in_{ix}}e^{in_{1}^{2}t}$, as other cases can be treated similarly.

Let $N_{2,0}$ be a large parameter such that $N_{2,0}^{100}=\frac{1}{\delta}$. Note that up to an exceptional set of probability $e^{-\delta^{-c}}\sim e^{-N_{2,0}^{c'}}$, we have

\begin{equation}\label{eq: randomn2}
\begin{cases}
|g_{n}(\omega)|\leq N_{2,0}^{s_{1}}, & |n|\leq N_{2,0},\\
|g_{n}(\omega)|\leq N_{2}^{s_{1}},& |n|\sim N_{2}\geq N_{2,0}.
\end{cases}
\end{equation}
In particular, one always has for all $u_{i}$ that
\begin{equation}\label{eq: randomxsb}
\begin{cases}
\|P_{N}u_{i}\|_{X^{0,b_{0}}}\lesssim N_{2,0}^{s_{1}}, & N\leq N_{2,0},\\
\|P_{N}u_{i}\|_{X^{0,b_{0}}}\lesssim N^{s_{1}}, &N\geq N_{2,0}. 
\end{cases}
\end{equation}
and, dropping another exceptional set of probability $e^{-\delta^{-c}}$ if necessary, one has 
\begin{equation}\label{eq: randomlinfty}
\|P_{N_{1}}u_{1}\|_{L^{\infty}_{t,x}}\leq N_{1}^{s_{1}}, \quad N_{1}\leq N_{2}, \quad N_{2,0}\leq N_{2}.
\end{equation}
 
 Now we split the \textbf{subcase 2} further into the following subsubcases.

In \textbf{subsubcase 2.1}, we restrict ourselves to the regime $N_{2}\leq N_{2,0}$, and use estimate \eqref{deter5} to derive
\begin{equation}\label{eq:deter121}
\left|\int_{\RRR\times \TTT^{2}}P_{N_{1}}u_{1}\overline{P_{N_{2}}{u_{2}}}u_{3}\bar{h}\phi(t/\delta)\right|
\lesssim \delta^{s_
{1}/8}N_{1}^{2s_{1}}\|u_{1}\|_{X^{0,b_{0}}}\|u_{2}\|_{X^{0,b_{0}}}\|u_{3}\|_{X^{0,b_{0}}}\|h\|_{X^{0,1-b_{0}}}
\lesssim \delta^{s_{1}}N_{1}^{2s_{1}}.
\end{equation}
Summing over $N_{1},N_{3}\leq N_{2}\leq N_{2,0}$, one obtains $\lesssim \delta^{s_{1}} N_{2,0}^{3s_1}$ and the desired estimate follows.

We are left with \textbf{subsubcase 2.2}, where $N_{2}\geq N_{2,0}$. We will prove
\begin{equation}\label{eq: xsbinterapp}
\left|\int_{\RRR\times \TTT^{2}}P_{N_{1}}\phi(t)u_{1}\overline{P_{N_{2}}\phi(t){u_{2}}}\phi(t)P_{N_{3}}u_{3}\phi(t)\bar{h}\right|
\lesssim N_{2}^{-1/10},
\end{equation}which will then imply the desired estimate by summing \eqref{eq: xsbinterapp} over $N_{1},N_{2},N_{3}$.

As remarked above, we only prove estimate \eqref{eq: xsbinterapp} for the case that $u_{1}$ is random. Using \eqref{deter6} and Lemma \ref{lem: linfinity}, one derives
 \begin{equation}\label{eqn: case0 1}
 \begin{aligned}
&\left|\int_{\RRR\times \TTT^{2}}P_{N_{1}}\phi(t)u_{1}\overline{P_{N_{2}}\phi(t){u_{2}}}\phi(t)P_{N_{3}}u_{3}\phi(t)\bar{h}\right|\\
\lesssim &N_{2}^{2s_{1}}\|\phi(t)P_{N_{2}}u_{2}\|_{X^{0,1/3}}\|\phi(t)P_{N_{3}}u_{3}\|_{X^{0,1/3}}\|\phi(t)P_{N_{2}}h\|_{X^{0,1/3}}.
\end{aligned}
\end{equation}

Let $F_{i}(\tau_{i},n_{i})$ be the space-time Fourier transform of $\phi(t)u_{i}$, $i=1,2,3$, and $F_{4}(\tau_{4},n_{4})$ be the Fourier transform of $\phi(t)h$, the integral being estimated is non-zero only if
\begin{equation}
\begin{cases}
n_{1}-n_{2}+n_{3}-n_{4}=0,\\
\tau_{1}-\tau_{2}+\tau_{3}-\tau_{4}=0,
\end{cases}
\end{equation}
which implies
\begin{equation}
 \sum_{i}(-1)^{i}(\tau_{i}-n^{2}_{i})=n_{1}^{2}-n_{2}^{2}+n_{3}^{2}-n_{4}^{2}=-2\langle n_{2}-n_{1},n_{2}-n_{3}\rangle \sim N_{2}^{2}, \text{ since } N_{2}\gg N_{1}, N_{3}.
\end{equation}

Observe that the Fourier transform of $\phi(t)u_{1}$ is essentially  supported on $|\tau_{1}-n_{1}^{2}|\lesssim 1$, thus, at least for one $i\in \{2,3,4\}$, one has $N_{2}^{2}\lesssim |\tau_{i}-n_{i}^{2}|$, hence one can upgrade estimate \eqref{eqn: case0 1} to 
 \begin{equation}
 \begin{aligned}
&\left|\int_{\RRR\times \TTT^{2}}P_{N_{1}}\phi(t)u_{1}\overline{P_{N_{2}}\phi(t){u_{2}}}\phi(t)P_{N_{3}}u_{3}\phi(t)\bar{h}\right|\\
\lesssim 
& N_{2}^{2s_{1}} N_{2}^{{\color{black}{2(-1/6-\epsilon_{0})}}}\|\phi(t)P_{N_{2}}u_{2}\|_{X^{0,b_{0}}}\|\phi(t)P_{N_{3}}u_{3}\|_{X^{0,b_{0}}}\|\phi(t)P_{N_{2}}h\|_{X^{0,1-b_{0}}}\\
\lesssim
&N_{2}^{Cs_{1}}N_{2}^{{\color{black}{-1/6}}}\lesssim N_{2}^{-1/10}.
\end{aligned}
\end{equation}

To make the above argument rigorous, one may decompose $\phi(t)u_{1}$ into
\begin{equation}\label{eqn: case0 2}
(P_{|\tau|\leq N_{2}}\phi(t))u_{1}+ (P_{|\tau|>N_{2}}\phi(t))u_{1},
\end{equation}
where the first term corresponds to frequency localization  at $|\tau_{1}-n_{1}^{2}|\leq N_{2}\ll N_{2}^{2}$, and  hence the above argument can be applied. For the second term, one simply observes that
\begin{equation}
\|P_{|\tau|>N_{2}}\phi(t)\|_{L_{t}^{\infty}}\lesssim N_{2}^{-100}.
\end{equation}
This concludes the proof.
\section{Proof of Proposition \ref{prop: onemore}: case by case study, case $(a)$}\label{sec: ProofPartII}
In this section, we consider case (a): $N_{2}(I)\lesssim N_1(II),  \quad N_2(I)\geq N_3(II)$. We aim to prove for all $w_1, v_2, w_3$ satisfying
\[
\|w_{1}\|_{X^{s_{0},b_{0}}}\lesssim 1, \qquad \|w_{3}\|_{X^{s_{0}, b_{0}}}\lesssim 1,\qquad v_{2}=\phi(t)e^{it\Delta_\gamma}\left(\sum_{n_2}\frac{g_{n_2}(\omega)}{|n_2|}e^{in_2\cdot x}\right),
\]and $\|h\|_{X^{0,1-b_{0}}}\lesssim 1$ that, up to an exceptional set
\begin{equation}\label{eq: mainestimatea}
\left|\sum_{N_{2}\lesssim N_{1}, N_{2}\geq N_{3}}N_{1}^{s_{0}}\int \NN_{1}(P_{N_{1}}w_{1}, P_{N_{2}}v_{2},P_{N_{3}}w_{3})\bar{h}\phi(t/\delta)\right|\lesssim \delta^{\epsilon_{1}}, \text{ for some }\epsilon_{1}\gg \epsilon_{0}.
\end{equation}

Fix $N_{2,0}^{100}=\frac{1}{\delta}$ and recall that any loss of $\delta^{-C\epsilon_{0}}$ will be irrelevant in the analysis. The values of the parameters $\epsilon_{0}\ll s_{1}\ll s_{0}$ will be determined later. 

By dropping a set of probability $e^{-\delta^{-c_{s_{1}}}}$, we will assume the following throughout the whole section:

\medskip

\begin{equation}\label{eq: crudepro}
\begin{cases}
|g_{n}(\omega)|\leq N_{2,0},& |n|\leq N_{2,0},\\
|g_{n}(\omega)|\leq |n|^{s_{1}},& |n|> N_{2,0}.
\end{cases}
\end{equation}
And one has in particular 
\begin{equation}\label{eq: crudexsb}
\begin{cases}
\|P_{N_{2}}\phi(t)v_{2}\|_{X^{0,b_{0}}}\lesssim N_{2,0}^{s_{1}}, & N_{2}\leq N_{2,0},\\
\|P_{N_{2}}\phi(t)v_{2}\|_{X^{0,b_{0}}}\lesssim N_{2}^{s_{1}}, & N_{2}> N_{2,0}.
\end{cases}
\end{equation}

\subsection{Standard reduction}\label{subsec: sr}
The goal in this subsection is to reduce the estimates of this case to Lemma \ref{lem: rdfa} and Lemma \ref{lem: rdfa1}, which will be stated at the end of this subsection.

Note that in the discussion of all the cases (b)--(l), there will be a similar reduction argument. We will present the full details of the reduction in this case, and only sketch it in other cases.

We first split the summation $\sum_{N_{2}\lesssim N_{1}, N_{2}\geq N_{3}}$ into two parts $N_{2}\leq N_{2,0}$ and $N_{2}> N_{2,0}$.

\subsubsection{The low frequency part: $N_2\leq N_{2,0}$}
We aim to prove
 \begin{equation}\label{eq: as1}
\sum_{N_{2}\leq N_{2,0}, N_{2}\lesssim N_{1}, N_{2}\geq N_{3}}N_{1}^{s_{0}}\left|\int \NN_{1}(P_{N_{1}}w_{1}, P_{N_{2}}v_{2},P_{N_{3}}w_{3})\bar{h}\phi(t/\delta)\right|\lesssim \delta^{\epsilon_{1}}, \text{ for some }\epsilon_{1}\gg \epsilon_{0}.
\end{equation}

Observe that, when $N_{1}\gg N_{2}$, one can replace the $h$ in \eqref{eq: as1} by $P_{N_{1}}h$, and when $N_{1}\sim N_{2}$, $h$ can be replaced by $P_{<N_{1}}h$.

Thus, via estimate \eqref{deter1.2} and \eqref{eq: crudexsb}, one has

\begin{itemize}
\item If $N_{1}\sim N_{2}$ (in particular, $N_{1}\lesssim N_{2,0}$),
\begin{equation}\label{eq: alow1}
\begin{aligned}
&N_{1}^{s_{0}}\left|\int \NN_{1}(P_{N_{1}}w_{1}, P_{N_{2}}v_{2},P_{N_{3}}w_{3})\bar{h}\phi(t/\delta)\right|\\
\lesssim 
&\delta^{s_{1}/8}N_{2}^{2s_{1}}\|P_{N_{1}}w_{1}\|_{X^{s_{0},b_{0}}}\|P_{N_{2}}v_{2}\|_{X^{0,b_{0}}}\|P_{N_{3}}w_{3}\|_{X^{0,b_{0}}}\|h\|_{X^{0,1-b_{0}}}\\
\lesssim 
&\delta^{s_{1}/8}N_{2,0}^{4s_{1}}\|P_{N_{1}}w_{1}\|_{X^{s_{0},b_{0}}}N_{1}^{-s_{1}}.
\end{aligned}
\end{equation}
\item If $N_{1}\gg N_{2}$,
\begin{equation}\label{eq: alow2}
\begin{aligned}
&N_{1}^{s_{0}}\left|\int \NN_{1}(P_{N_{1}}w_{1}, P_{N_{2}}v_{2},P_{N_{3}}w_{3})\bar{h}\phi(t/\delta)\right|\\
\lesssim
&\delta^{s_{1}/8}N_{2}^{2s_{1}}\|P_{N_{1}}w_{1}\|_{X^{s_{0},b_{0}}}\|P_{N_{2}}v_{2}\|_{X^{0,b_{0}}}\|w_{3}\|_{X^{0,b_{0}}}\|P_{N_{1}}h\|_{X^{0,1-b_{0}}}\\
\lesssim 
&\delta^{s_{1}/8}N_{2}^{3s_{1}}\|P_{N_{1}}w_{1}\|_{X^{s_{0},b_{0}}}\|P_{N_{1}}h\|_{X^{0,1-b_{0}}}.
\end{aligned}
\end{equation}
 \end{itemize}
 The desired estimate will follow if one sums over the associated $N_{1}, N_{2}, N_{3}$ and apply Cauchy inequality in the sum on $N_{1}$.

\begin{rem}\label{rmk: practicallwp}
We point out that the low frequency case is always the easier part in random data problems, and essentially follows  from  deterministic estimates usually used in the local well-posedness argument, we will not repeat this part in the rest of the article.
\end{rem}

\subsubsection{Reduction to resonant part}\label{disnoncasea}
Now we are left with the case $N_{2}> N_{2,0}$, we aim to prove
\begin{equation}
  \sum_{N_{1}\geq N_{2}\geq N_{3}, N_{2}> N_{2,0}}N_1^{s_0}\left|\int \phi(t/\delta)\bar{h} \NN_1(P_{N_{1}}\phi(t/\delta)w_{1},P_{N_{2}} \phi(t)v_{2},P_{N_{3}} \phi(t/\delta)w_{3})\,dx dt\right|
\lesssim N_{2,0}^{-\epsilon_{1}}
 \end{equation}
 for some $\epsilon_{1}\gg \epsilon_{0}$.
 
 We will not explore the time localization $\phi(t/\delta)$ in this part. Observe that $\phi(t)h=\phi(t)\phi(t/\delta)h$, we may hence define $\tilde{h}$ as $\phi(t/\delta)h$ and use $\phi(t)\tilde{h}$ in the following estimate. Note that we still have $\|\tilde{h}\|_{X^{0,1-b_{0}}}\lesssim 1$. For the sake of brevity, we still denote $\tilde{h}$ as $h$.
 
 Our aim is to prove for fixed $N_{1},N_{2},N_{3}$ satisfying $N_{2}\lesssim N_{1}, N_{2}\geq N_{3}, N_{2}> N_{2,0}$ that 
 \begin{itemize}
 \item
 If $N_{1}\gg N_{2}$,
 \begin{equation}\label{eq: reduced1}
 N_1^{s_0}\left|\int \phi(t)\bar{h} \NN_1(P_{N_{1}}\phi(t)w_{1},P_{N_{2}} \phi(t)v_{2},P_{N_{3}} \phi(t)w_{3})\,dx dt\right|\lesssim N_{2}^{-\epsilon_{1}}\|P_{N_{1}}w_{1}\|_{X^{s_{0},b_{0}}}\|P_{N_{1}}h\|_{X^{0,1-b_{0}}},
 \end{equation}
 \item if $N_{1}\sim N_{2}$,
 \begin{equation}\label{eq: reduced2}
 N_1^{s_0}\left|\int \phi(t)\bar{h} \NN_1(P_{N_{1}}\phi(t)w_{1},P_{N_{2}} \phi(t)v_{2},P_{N_{3}} \phi(t)w_{3})\,dx dt\right|\lesssim N_{2}^{-\epsilon_{1}}\|P_{N_{1}}w_{1}\|_{X^{s_{0},b_{0}}}\|P_{<N_{1}}h\|_{X^{0,1-b_{0}}}.
 \end{equation}
 \end{itemize}

 We will focus on the proof of \eqref{eq: reduced1}, and it will be easy to see that \eqref{eq: reduced2} follows similarly (almost line by line).
 
 Observe that, since $N_{1}\gg N_{2}\geq N_{3}$, one has 
 \begin{equation}
 \begin{aligned}
  N_1^{s_0}\left|\int \phi(t)\bar{h} \NN_1(P_{N_{1}}\phi(t)w_{1},P_{N_{2}} \phi(t)v_{2},P_{N_{3}} \phi(t)w_{3})\,dx dt\right|\\
  =N_1^{s_0}\left|\int \phi(t)\overline{P_{N_{1}}h} \NN_1(P_{N_{1}}\phi(t)w_{1},P_{N_{2}} \phi(t)v_{2},P_{N_{3}} \phi(t)w_{3})\,dx dt\right|.
  \end{aligned}
 \end{equation}
 
To carry on the proof of \eqref{eq: reduced1}, we introduce another parameter $M=N_{2}^{100s_{1}}$. One may split $w_{i}$, $i=1,3$, and $v_{2}$ as 
 \begin{equation}\label{eq: splita}
 \phi(t)w_{i}:=P_{|\tau_{i}-n_{i}^{2}|<M}\phi(t)w_{i}+P_{|\tau_{i}-n_{i}^{2}|>M}\phi(t)w_{i}, \quad \phi(t)v_{2}(t)=P_{|\tau|<M}\phi(t)v_{2}+P_{|\tau|>M}\phi(t)v_{2},
 \end{equation}and the same for $h$. 
  
  \begin{rem}\label{rem: timelocal}
with  such a splitting one may lose the time localization. This can be  overcome by writing for example $P_{<M}\phi(t)$ as $\tilde{\phi}(t)P_{<M}\tilde{\tilde\phi}$, such that $\phi,\tilde{\phi}$ are Schwarz uniform in $M$. Or, one may further require $\phi(t)=\tilde{\psi}(t)^{4}$, and  indeed split as $\phi(t)=\phi_{1}(t)+\phi_2(t)$ where $\phi_{1}(t)=|P_{|\tau|<M}\tilde{\psi}(t)|^{4}$.  To make the proof clean, we leave further details to the interested reader, and  allow ourselves  to freely multiplying an extra time localization $\psi(t)$ in the proof.
\end{rem}

Via \eqref{eq: splita}, one can naturally split the left hand side of \eqref{eq: reduced1} into $2^{4}$ parts. Each part is of the form 
$N_{1}^{s_{0}}\left|\int \NN_1(P_{N_1}f_{1}, P_{N_2}{f}_{2}, P_{N_3}f_{3})\overline{P_{N_1}{f}_{4}}\right|$, where $f_{i}=P_{|\tau_{i}-n_{i}^{2}|<M}\phi(t)w_{i}$, or $P_{|\tau_{i}-n_{i}^{2}|>M}\phi(t)w_{i}$ for $i=1,3$, $f_{2}=P_{|\tau|<M}\phi(t)v_{2}$ or $P_{|\tau|>M}\phi(t)v_{2}$, and $f_{4}=P_{|\tau-n^{2}|<M}\phi(t)h$ or $P_{|\tau-n^{2}|>M}\phi(t)h$.

Then, applying \eqref{deter4}, one has for some Schwartz function $\psi(t)$ that
\begin{equation}\label{eq: caseakd}
\begin{aligned}
N_{1}^{s_{0}}\left|\int \NN_1(P_{N_1}f_{1}, P_{N_2}{f}_{2}, P_{N_3}f_{3})\overline{P_{N_1}{f}_{4}}\right|
\lesssim &N_1^{s_0}\left|\int \NN_1(P_{N_1}f_{1}, P_{N_2}{f}_{2}, P_{N_3}f_{3})\overline{P_{N_1}{f}_{4}}\psi(t)\right|\\
\lesssim& {\color{black}{N_{2}^{s_{1}}\|P_{N_{1}}f_{1}\|_{X^{s_{0},1/3}}\|P_{N_{2}}f_{2}\|_{L_{t,x}^{\infty}}\|P_{N_{3}}f_{3}\|_{X^{0,1/3}}\|f_{4}\|_{X^{0,1/3}}}}\\
&\qquad {\color{black}{+\|P_{N_{1}}f_{1}\|_{X^{s_{0},1/3}}\|P_{N_{2}}f_{2}\|_{X^{0,b_{0}}}\|P_{N_{2}}f_{3}\|_{X^{0,1/3}}\|P_{N_{1}}f_{4}\|_{X^{0,1/3}}}}.
\end{aligned}
\end{equation}
(In the second line, we add a time localization $\psi(t)$, following Remark \ref{rem: timelocal}.  Also recall we have $s_{1}\gg \epsilon_{0}$.)

Unless $f_{i}=P_{|\tau_{i}-n_{i}^{2}|<M}\phi(t)w_{i}$, $i=1,3$, $f_{4}=P_{<M}\phi(t)h$, and $f_{2}=P_{|\tau|<M}\phi(t)v_{2}$, at least one of the following estimates will be true (after dropping an extra set of probability $e^{-N_{2}^{c}}$ if necessary):
\begin{itemize}
\item  $\|P_{N_{1}}f_{1}\|_{X^{s_{0},1/3}}\lesssim N_{2}^{-10s_{1}}\|P_{N_{1}}f_{1}\|_{X^{s_{0},b_{0}}}$,\\
\item   $\|P_{N_{2}}f_{2}\|_{L_{t,x}^{\infty}}+\|P_{N_{2}}f_{2}\|_{X^{0,b_{0}}}\lesssim N_{2}^{-10s_{1}}$,\\
\item$\|P_{N_{3}}f_{3}\|_{X^{0,1/3}}\lesssim N_{2}^{-10s_{1}}\|P_{N_{3}}f_{3}\|_{X^{0,b_{0}}}$,\\
\item  $\|P_{N_1}f_{4}\|_{X^{0,1/3}}\lesssim N_{2}^{-10s_{1}}\|P_{N_1}f_{4}\|_{X^{0,1-b_{0}}}$,
\end{itemize}
and we always have
\begin{equation}
\|P_{N_{2}}f_{2}\|_{X^{0,b_{0}}}+\|P_{N_{2}}f_{2}\|_{L_{t,x}^{\infty}}\lesssim N_{2}^{s_{1}}.
\end{equation}

The desired estimate follows by inserting the  above ones  into \eqref{eq: caseakd}.

\begin{rem}\label{rem: m1}
The numerology in the above calculation is in fact very simple modulo lower order terms. The term $\int \NN_{1}(P_{N_{1}}u_{1},P_{N_{2}}u_{2},P_{N_{3}}u_{3})h\psi(t)$ can essentially  be thought as $\int P_{N_{1}}u_{1}\overline{P_{N_{2}}u_{2}}P_{N_{3}}u_{3}\bar{h}\psi(t)$, and will only miss the desired estimate by at most a factor $N_{2}^{10s_{1}}$ via \eqref{deter1}. On the other hand, when there is some $u_{i}=v_{i}$, which is hence already essentially localized at $|\tau-n^{2}|\lesssim 1$, then for all the rest of the functions $h$, $u_{j}$, one can gain at least $1/2-\epsilon_{0}-1/3$ derivative. Therefore, unless all the other terms have space-time frequency localization in $|\tau-n^{2}|<N_{2}^{100s_{1}}$, the desired estimate will automatically follow.
\end{rem}

 Now, we are left with  the case where  $f_{i}=P_{|\tau_{i}-n_{i}^{2}|<M}\phi(t)w_{i}$, $i=1,3$, $f_{4}=P_{|\tau-n^2|<M}\phi(t)h$, and $f_{2}=P_{|\tau|<M}\phi(t)v_{2}$.

 Let $d_{1}(n,t), r_{2}(n,t),d_{3}(n,t), H(n,t)$ be the space Fourier transform of $P_{|\tau_{1}-n_{1}^{2}|<M}\phi(t)w_{1}$, $P_{|\tau|<M}\phi(t)v_{2}$, $P_{|\tau_{3}-n_{3}^{2}|<M}\phi(t)w_{3}$, $P_{|\tau-n^{2}|<M}\phi(t)P_{N_{1}}h$.
 We abbreviate $d_{1}(n,t)$, $r_{2}(n,t)$, $d_{3}(n,t)$, $H(n,t)$ as $d_{1}(n)$, $r_{2}(n)$, $d_{3}(n)$, $H(n)$ respectively.
 Observe that
 \begin{equation}\label{eq: basea}
 \begin{cases}
 N_{i}^{2s_{0}}\sum_{n\sim N_{i}} \|d_{i}(n)e^{-in_i^{2}t}\|_{H_{t}^{b_{0}}}\lesssim \|P_{N_{1}}w_{i}\|^{2}_{X^{s_{0},b_{0}}},\\
 r_{2}(n_{2},t)=\psi(t)\frac{g_{n_{2}}}{|n_{2}|}, \text{ for some Schwartz function $\psi$},\\
\sum_{|n|\sim N_{1}} \|H(n)e^{-in^{2}t}\|_{H_{t}^{1-b_{0}}}\lesssim  \|P_{N_{1}}h\|_{X^{0,1-b_{0}}}^{2}.
 \end{cases}
 \end{equation}
 (One may observe, for example,  that $\|d_{i}(n,t)e^{-in^{2}t}\|_{H_{t}^{b_{0}}}\sim \|d_{i}(n,t)e^{in\cdot x}\|_{X^{s_{0},b_{0}}}$. We also point out that we have estimated $P_{|\tau|<M}\phi(t)$ just as some Schwartz function $\psi(t)$. Furthermore, one may observe that
 $\|d_{i}(n,t)e^{-in^{2}t}\|_{L_{t}^{p}}=\|d_{i}(n,t)\|_{L_{t}^{p}}$.) We will show that
 \begin{equation}\label{eq: rr1}
 \begin{aligned}
 &N_1^{s_0}\left|\int \phi(t)\bar{h} \NN_1(P_{N_{1}}\phi(t)w_{1},P_{N_{2}} \phi(t)v_{2},P_{N_{3}} \phi(t)w_{3})\,dx dt\right|\\
=&N_{1}^{s_{0}}\left|\sum_{|n_{i}|\sim N_{i}, n_{1}-n_{2}+n_{3}=n}\int d_{1}(n_{1})\overline{r_{2}(n_{2})}d_{3}(n_{3})\overline{H(n)}\,dt\right|\\
 \lesssim &N_{2}^{-\epsilon_{1}}\|P_{N_{1}}w_{1}\|_{X^{s_{0},b_{0}}}\|P_{N_{1}}h\|_{X^{0,1-b_{0}}}\|P_{N_{3}}w_{3}\|_{X^{s_{0},b_{0}}}, \text{ for some } \epsilon_{1}\gg\epsilon _{0}.
 \end{aligned}
 \end{equation}
 
Observe further that $d_{1}(n_{1},t)$, $r_{2}(n_{2},t)$, $d_{3}(n_{3},t)$, and $H(n,t)$ are Fourier supported in $|\tau_{i}-n_{i}^{2}|\lesssim M, i=1,2,3$ and $|n-\tau^{2}|\lesssim M$. Thus for the integral $\int d_{1}(n_{1})\overline{r_{2}(n_{2})}d_{3}(n_{3})\overline{H(n)}\,dt$ to be non-zero, one necessarily has
 \begin{equation} 
 n_{1}^{2}-n_{2}^{2}+n_{3}^{2}-n^{2}=O(N_{2}^{100s_{1}}).
 \end{equation}We thus have
 \begin{equation}
 \begin{aligned}
 &N_{1}^{s_{0}}\left|\sum_{|n_{i}|\sim N_{i}, n_{1}-n_{2}+n_{3}=n}\int d_{1}(n_{1})\overline{r_{2}(n_{2})}d_{3}(n_{3})\overline{H(n)}\,dt\right|\\ \leq
 &N_1^{s_0}\sum_{
\begin{subarray}
\quad |n_{i}|\sim N_{i}, n_{1}-n_{2}+n_{3}-n_{4}=0,\\
n_{1}^{2}-n_{2}^{2}+n_{3}^{2}-n^{2}=O(N_{2}^{100s_{1}})
\end{subarray}} \left|\int d_{1}(n_{1})\overline{r_{2}(n_{2})}d_{3}(n_{3})\overline{H(n)}\,dt\right|.
\end{aligned}
 \end{equation}

 To summarize, to prove \eqref{eq: reduced1}, we are left with showing the following:
 \begin{lem}\label{lem: rdfa}
 Let $N_{1}\gg N_{2}\geq N_{3}$, then one has for some $\epsilon_1\gg \epsilon_0$ that
 \begin{equation}\label{eq: rdfa}
 \begin{aligned}
 &N_1^{s_0}\sum_{
\begin{subarray}
\quad |n_{i}|\sim N_{i}, n_{1}-n_{2}+n_{3}-n_{4}=0,\\
n_{1}^{2}-n_{2}^{2}+n_{3}^{2}-n^{2}=O(N_{2}^{100s_{1}})
\end{subarray}} \left|\int d_{1}(n_{1})\overline{r_{2}(n_{2})}d_{3}(n_{3})\overline{H(n)}\,dt\right|\\
\lesssim &
N_{2}^{-\epsilon_{1}}\|P_{N_{1}}w_{1}\|_{X^{s_{0},b_{0}}}\|P_{N_{1}}h\|_{X^{0,1-b_{0}}}\|P_{N_{3}}w_{3}\|_{X^{s_{0},b_{0}}}.
 \end{aligned}
 \end{equation}
 
 \end{lem}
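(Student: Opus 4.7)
The plan is to prove Lemma \ref{lem: rdfa} by combining a pointwise bound on the Gaussians $g_{n_2}$ with Cauchy--Schwarz and the counting lemmas of Section \ref{sec: counting}.

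\emph{Step 1 (Deterministic reduction via the Gaussian bound).} I would first remove an exceptional set of probability at most $e^{-N_{2,0}^c}$ so that $|g_{n_2}(\omega)|\lesssim N_2^{s_1}$ for every $|n_2|\sim N_2$. On this event $|r_2(n_2,t)|\lesssim N_2^{s_1-1}|\psi(t)|$, and the target becomes the deterministic bound
\[
N_1^{s_0}\,N_2^{s_1-1}\sum_{\substack{|n_i|\sim N_i,\,n=n_1-n_2+n_3\\ n_1^2-n_2^2+n_3^2-n^2=O(N_2^{100 s_1})}}\left|\int \psi(t)\,d_1(n_1)\,d_3(n_3)\,\overline{H(n)}\,dt\right|\lesssim N_2^{-\epsilon_1},
\]
with $d_1,d_3,H$ as in \eqref{eq: basea} normalized to have unit $X^{s_0,b_0}$ (respectively $X^{0,1-b_0}$) norm.

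\emph{Step 2 (Cauchy--Schwarz in time).} For each triple $(n_1,n_2,n_3)$ I would use Cauchy--Schwarz in $t$ to dominate the time integral by $\|d_1(n_1)\,d_3(n_3)\|_{L^2_t}\,\|H(n)\|_{L^2_t}$, and then exploit the narrow Fourier supports $|\tau_i-n_i^2|,|\tau-n^2|\leq M=N_2^{100 s_1}$ via Bernstein in $\tau$ to pass to quantities controlled by the $X^{0,b_0}$ and $X^{0,1-b_0}$ norms. The resulting $M^{O(1)}=N_2^{O(s_1)}$ loss is harmless since $s_1\ll s_0$.

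\emph{Step 3 (Summing over resonant frequencies).} The heart of the argument is the sum over resonant triples. The crucial input is Lemma \ref{lem: circle}: since $N_1\gg N_2\geq N_3$, for fixed $(n_1,n_3)$ the number of resonant $n_2$ in the sharp $O(1)$ window is $\lesssim \max(N_2 N_1^{-1/3},1)$. Our window has thickness $O(N_2^{100 s_1})$, so I would first partition it into $O(\log N_2)$ dyadic sub-shells and apply Lemma \ref{lem: circle} on each, absorbing an $N_2^{100 s_1}$ factor. Cauchy--Schwarz in $n_2$ (with $n_1,n_3$ fixed), then in $(n_1,n_3)$, and Plancherel in $n$ pair the remaining frequency sums with $\|P_{N_1}w_1\|_{X^{s_0,b_0}}\|P_{N_3}w_3\|_{X^{s_0,b_0}}\|P_{N_1}h\|_{X^{0,1-b_0}}$, extracting a net $N_1^{-1/6}$ gain from the lattice-point count. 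Balancing this against the $N_1^{s_0}$ prefactor, the Gaussian factor $N_2^{s_1-1}$, and the various $N_2^{O(s_1)}$ losses yields the required $N_2^{-\epsilon_1}$ decay once $\epsilon_0\ll\epsilon_1\ll s_1\ll s_0\ll 1$ is imposed.

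\emph{Main obstacle.} The principal difficulty is that on an irrational torus Lemma \ref{lem: circle} provides only the relatively weak gain $N_1^{1/3}$, in contrast to the sharper divisor-style bounds Bourgain used in the rational case. Reconciling this weaker gain with the Bernstein losses $M^{O(1)}$ from the thick paraboloid supports, the $N_2^{100 s_1}$ loss from the thickened resonance window, the Gaussian bound $N_2^{s_1}$, and the $N_1^{s_0}$ prefactor is exactly what dictates the hierarchy $\epsilon_0\ll\epsilon_1\ll s_1\ll s_0\ll 1$ used throughout the paper.
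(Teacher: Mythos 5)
Your Steps 1--2 match the paper's reduction: bound $|r_2(n_2)|\lesssim N_2^{s_1-1}$ off an exceptional set, put $d_1(n_1)$ and $H(n)$ in $L^2_t$ and $d_3(n_3)$ in $L^\infty_t$, and reduce to a weighted count over the resonance set. The gap is in Step 3. Because the time integral forces two of the four factors into $L^\infty_t$, the sequence $\|d_3(n_3)\|_{L^\infty_t}$ can only be used through its $\ell^\infty_{n_3}$ (or at best $\ell^2_{n_3}$) norm, and any Cauchy--Schwarz organization of the quadrilinear sum leaves an uncancelled positive power of $N_3$. Concretely, the paper's computation \eqref{eq: a1}--\eqref{eq: a2} fixes $n_3$, applies Cauchy--Schwarz in $n$, Lemma \ref{lem: linecounting} for the pairs $(n_1,n_2)$ with $n,n_3$ fixed, and Lemma \ref{lem: circle} for $n_2$ with $n_1,n_3$ fixed, and then sums trivially over $n_3$; the result is
$N_3^{2}\,N_2^{Cs_1}\,N_2^{-1/2}\bigl[\max(N_2N_1^{-1/3},1)\bigr]^{1/2}$
times the three norms. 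The counting gain is at most $N_2^{-1/2}$ (it is \emph{not} a clean $N_1^{-1/6}$ when $N_2<N_1^{1/3}$), so this closes only when $N_3\leq N_2^{1/100}$. When $N_3\sim N_2$ your claimed balance fails: no rearrangement of Cauchy--Schwarz and the counting lemmas of Section \ref{sec: counting} produces the required $N_2^{-\epsilon_1}$ decay, essentially because the resonance set has size $\gtrsim N_1N_2N_3^2$ while only two factors carry $\ell^2$ information.

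The missing ingredient is the complementary regime $N_3\geq N_2^{1/100}$, where the paper abandons the counting argument entirely and goes back to the trilinear form, applying the deterministic estimate \eqref{deter2} together with \eqref{eq: crudexsb}; this yields a bound $N_2^{Cs_1}N_3^{-s_0}$, and the gain $N_3^{-s_0}$ coming from the $X^{s_0,b_0}$ norm of $w_3$ beats the $N_2^{Cs_1}$ loss precisely because $N_3\geq N_2^{1/100}$ and $s_1\ll s_0$. Your proposal never invokes the smoothing weight $N_3^{-s_0}$ of the third (deterministic) input, and without this case split the argument does not close.
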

 
 We also write down the corresponding lemma that will imply \eqref{eq: reduced2}.
 \begin{lem}\label{lem: rdfa1}
Let $N_{1}\sim N_{2}\geq N_3$, then the same estimate (\ref{eq: rdfa}) holds if one replaces the $P_{N_{1}}h$ by $P_{<N_{1}}h$. 
\end{lem}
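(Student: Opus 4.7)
The plan is to prove Lemma \ref{lem: rdfa1} by running essentially the same argument that establishes Lemma \ref{lem: rdfa}, with only cosmetic modifications to accommodate the different Littlewood--Paley cutoff on $h$. The reason the two cases are so similar is that the constraint set, the counting lemmata, and the probabilistic tools all depend primarily on $(N_1, N_2, N_3)$ and on the resonance tube $n_1^2-n_2^2+n_3^2-n^2 = O(N_2^{100s_1})$; the frequency $n=n_1-n_2+n_3$ is then \emph{determined} by the other three. The only difference is that when $N_1\gg N_2$ one automatically has $|n|\sim N_1$, whereas when $N_1\sim N_2$ the output $|n|$ can range anywhere in $\{|n|\lesssim N_1\}$, which accounts for the replacement of $P_{N_1}h$ by $P_{<N_1}h$.

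The main steps are as follows. First, substitute $r_2(n_2,t)=\psi(t)g_{n_2}/|n_2|$ to expose the Gaussian factor, and use duality to introduce unimodular coefficients $\epsilon(n_1,n_2,n_3,n)$ absorbing the absolute values. This rewrites the left-hand side of \eqref{eq: rdfa} as a linear Gaussian chaos in $\{g_{n_2}\}$ of the form $\sum_{n_2}(g_{n_2}/|n_2|)F(n_2)$, where $F(n_2)$ encodes the (deterministic) sum over $(n_1,n_3,n)\in S(n_2)$ with $|n|\lesssim N_1$ of the time integral $\int d_1(n_1)\bar{\psi}(t)d_3(n_3)\bar{H}(n)\,dt$. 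Lemma \ref{lem: generalwiener} then yields, up to an exceptional set of probability $\lesssim e^{-\delta^{-c}}$, the pointwise bound by $(\sum_{n_2}|F(n_2)|^2/|n_2|^2)^{1/2}$ (modulo a harmless $\delta^{-C\epsilon_0}$ factor). To control each $F(n_2)$, apply Cauchy--Schwarz over $S(n_2)\cap\{|n|\lesssim N_1\}$ and invoke the counting bound $\#S(n_2)\lesssim (N_1)^{C\epsilon_0}N_1N_3$ from Lemma \ref{lem: count strong}. The time integral itself is controlled via Hölder combined with the Sobolev embeddings $H^{b_0}_t\hookrightarrow L^\infty_t$ (applied to $d_1(n_1)e^{-in_1^2 t}$ and $d_3(n_3)e^{-in_3^2 t}$) and an analogous $L^p$ bound for $H(n)e^{-in^2 t}$ at regularity $1-b_0$. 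The resulting $\ell^2$-norm in $n$ reproduces $\|P_{<N_1}h\|_{X^{0,1-b_0}}$ directly, since $H$ is supported in $|n|<N_1$ and no extra dyadic decomposition in the dual variable is needed.

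Assembling the factors one obtains a bound of the schematic form $N_1^{s_0}\cdot(N_1N_3)^{1/2}N_2^{C s_1}\cdot N_2^{-1}\cdot N_1^{-s_0}N_3^{-s_0}$ times the three relevant norms; since $N_1\sim N_2$ and $N_3\leq N_2$, this telescopes to $N_2^{-\epsilon_1}$ for some $\epsilon_1\gg\epsilon_0$ provided $s_1$ is chosen sufficiently smaller than $s_0$. The main technical obstacle, identical to the one present in the proof of Lemma \ref{lem: rdfa}, is to balance the three competing sources of loss (the $N_2^{100s_1}$ thickness of the resonance tube entering the counting estimate, the $N_2^{s_1}$ Gaussian tail loss on $|g_{n_2}|$, and the reduced smoothing when the random term sits at the \emph{highest} frequency) against the two sources of gain (the factor $1/|n_2|\sim 1/N_2$ from the random amplitude, and the counting gain from Lemma \ref{lem: count strong}). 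The parameter hierarchy $\epsilon_0\ll s_1\ll s_0\leq 1$ is precisely what makes this balance work.
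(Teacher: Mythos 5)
Your overall plan --- rerun the proof of Lemma \ref{lem: rdfa} essentially verbatim, noting that when $N_1\sim N_2$ the output frequency only satisfies $|n|\lesssim N_1$, whence $P_{<N_1}h$ --- is the right one and is exactly what the paper intends. But the argument you substitute for that proof has a genuine gap at its central step. You absorb the absolute values in $\sum_{S}\bigl|\int d_1\overline{r_2}d_3\overline{H}\,dt\bigr|$ into unimodular coefficients $\epsilon(n_1,n_2,n_3,n)$, declare the result a linear Gaussian chaos $\sum_{n_2}(g_{n_2}/|n_2|)F(n_2)$ with ``deterministic'' $F(n_2)$, and apply Lemma \ref{lem: generalwiener}. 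Those coefficients are not deterministic: each $\epsilon$ must cancel the phase of $\overline{g_{n_2}}\int d_1\bar\psi d_3\overline{H}\,dt$, so it depends on $\omega$, and Lemma \ref{lem: generalwiener} is valid (as the paper stresses) only when the coefficients are numbers. Even setting the signs aside, $F(n_2)$ contains the arbitrary functions $d_1,d_3,H$, so the exceptional set produced by the large-deviation estimate would depend on $(w_1,w_3,h)$, whereas \eqref{eq: rdfa} must hold uniformly over the unit balls outside a single exceptional set; no union bound over these choices is available. This step is load-bearing: without the $\ell^2_{n_2}$ gain it purports to provide, summing $|F(n_2)|/|n_2|$ over the $\sim N_2^2$ admissible $n_2$ costs an extra factor $\sim N_2\sim N_1$ relative to your claimed bound, and the estimate fails.

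The paper's proof of Lemmas \ref{lem: rdfa} and \ref{lem: rdfa1} avoids this entirely: it uses only the crude pointwise bound $\|r_2(n_2)\|_{L_t^\infty}\lesssim N_2^{s_1-1}$ coming from \eqref{eq: crudepro} (no cancellation in $n_2$ at all) and recovers the gain from counting --- Cauchy--Schwarz in $n$, the linear count of Lemma \ref{lem: linecounting}, and the circle-counting Lemma \ref{lem: circle}, which says that for fixed $(n_1,n_3)$ there are only $N_2^{100s_1}\max(N_2/N_1^{1/3},1)$ admissible $n_2$. This yields \eqref{eq: a2}, whose gain is only $N_3^2N_2^{-1/6+Cs_1}$ when $N_1\sim N_2$, so a second regime is required: when $N_3\geq N_2^{1/100}$ one falls back on the deterministic estimate \eqref{deter2}, which gains $N_3^{-s_0}$. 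Your write-up omits this case split, substitutes Lemma \ref{lem: count strong} for the lemmas actually used, and describes the difficulty as ``the random term at the highest frequency,'' which is the situation of case (c), not of case (a) where $v_2$ sits at the middle frequency.
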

One can easily check  that the proof of Lemma \ref{lem: rdfa} also works for Lemma \ref{lem: rdfa1} (almost line by line).
\subsection{Random data type estimate: Proof of Lemma \ref{lem: rdfa}}
 Recall that we always assume \eqref{eq: crudepro} and that we are in the regime $N_{1}\gg N_{2}\geq N_{3}$.

First note that for all $n_{3}\sim N_{3}$, we have $$\|d_{3}(n_{3})\|_{L_{t}^{\infty}}=\|d_{3}(n_{3})e^{-in_{3}^{2}t}\|_{L_{t}^{\infty}}\lesssim \|d_{3}(n_{3})e^{-in_{3}^{2}t}\|_{H_{t}^{b_{0}}}\sim \|d_{3}(n_{3})e^{in_{3}\cdot x}\|_{X^{0,b_{0}}}\leq \|P_{N_{3}}w_{3}\|_{X^{0,b_{0}}}. $$

Now, for all $|n_{3}|\sim N_{3}$ fixed, we have
\begin{equation}\label{eq: a1}
\begin{aligned}
&\sum_{
\begin{subarray}
\quad |n_{i}|\sim N_{i}, i=1,2, \,n_{1}-n_{2}+n_{3}-n=0,\\
n_{1}^{2}-n_{2}^{2}+n_{3}^{2}-n^{2}=O(N_{2}^{100s_{1}})
\end{subarray} }\left|\int d_{1}(n_{1})\overline{r_{2}(n_{2})}d_{3}(n_{3})\overline{H(n)}\,dt\right|\\
\lesssim
&\|P_{N_{3}}w_{3}\|_{X^{0,b_{0}}}\sum_{\begin{subarray}
\quad |n_{i}|\sim N_{i}, i=1,2, \,n_{1}-n_{2}+n_{3}-n=0,\\
n_{1}^{2}-n_{2}^{2}+n_{3}^{2}-n^{2}=O(N_{2}^{100s_{1}})
\end{subarray}} \|d_{1}(n_{1})\|_{L_{t}^{2}}\|r_{2}(n_{2})\|_{L_{t}^{\infty}}\|H(n)\|_{L_{t}^{2}}\\
\lesssim
&\|P_{N_{3}}w_{3}\|_{X^{0,b_{0}}} \left(\sum_{|n|\sim N_{1}}\|H(n)\|^{2}_{L_{t}^{2}}\right)^{1/2}
\times\left( \sum_{|n|\sim N_{1}}\Big\{\sum_{\begin{subarray}
\quad |n_{i}|\sim N_{i}, i=1,2, \,n_{1}-n_{2}+n_{3}-n=0,\\
n_{1}^{2}-n_{2}^{2}+n_{3}^{2}-n^{2}=O(N_{2}^{100s_{1}})
\end{subarray}} \|d_{1}(n_{1})\|_{L_{t}^{2}}\|r_{2}(n_{2})\|_{L_{t}^{\infty}}\Big\}^{2}\right)^{1/2}\\
\lesssim
&\|P_{N_{3}}w_{3}\|_{X^{0,b_{0}}}\|P_{N_{1}}h\|_{X^{0,1-b_{0}}}
\times\left( N_{2}^{1+100s_1}\sum_{\begin{subarray}
\quad |n_{i}|\sim N_{i}, i=1,2,\, n_{1}-n_{2}+n_{3}-n=0,\\
n_{1}^{2}-n_{2}^{2}+n_{3}^{2}-n^{2}=O(N_{2}^{100s_{1}})
\end{subarray}}\{ \|d_{1}(n_{1})\|_{L_{t}^{2}}\|r_{2}(n_{2})\|_{L_{t}^{\infty}}\}^{2}\right)^{1/2}\\
\lesssim &\|P_{N_{3}}w_{3}\|_{X^{0,b_{0}}}\|P_{N_{1}}h\|_{X^{0,1-b_{0}}}
\times\left( N_{2}^{1+100s_1}\sum_{\begin{subarray}
\quad |n_{i}|\sim N_{i}, i=1,2,\langle n_{2}-n_{1}, n_{2}-n_{3} \rangle=O(N_{2}^{100s_{1}})
\end{subarray}} \|d_{1}(n_{1})\|^{2}_{L_{t}^{2}}N_{2}^{-2+2s_{1}}\right)^{1/2}.
\end{aligned}
\end{equation}
In the second step above, we used Cauchy inequality in $n$, while in the second to last step, we used Lemma \ref{lem: linecounting}. Observe as well that $\|H(n)\|_{L_{t}^{2}}\leq \|H(n)e^{-in{t}}\|_{H_{t}^{1-b_{0}}}$, $\|r_{2}(n_{2})\|_{L_{t}^{\infty}}\lesssim |N_{2}|^{s_{1}-1}$ up to an exceptional set, and $\|d_{1}(n_{1})\|_{L_{t}^{2}}\leq \|d_{1}(n_{1})e^{-in_{1}^{2}t}\|_{L_{t}^{2}}$.

Furthermore, by the counting lemma (Lemma \ref{lem: circle}), one has for $n_{1}, n_{3}$ fixed that
\begin{equation}
\sharp\{|n_{2}|\sim N_{2}:\,\langle n_{2}-n_{1}, n_{2}-n_{3} \rangle=O(N_{2}^{100s_{1}})\}\lesssim N_{2}^{100s_{1}}\max\left(\frac{N_{2}}{N_{1}^{1/3}}, 1\right).
\end{equation}
To summarize, we derived that
\begin{equation}\label{eq: a2}
\begin{aligned}
& N_1^{s_0}\sum_{
\begin{subarray}
\quad |n_{i}|\sim N_{i}, n_{1}-n_{2}+n_{3}-n_{4}=0,\\
n_{1}^{2}-n_{2}^{2}+n_{3}^{2}-n^{2}=O(N_{2}^{100s_{1}})
\end{subarray}} \left|\int d_{1}(n_{1})\overline{r_{2}(n_{2})}d_{3}(n_{3})\overline{H(n)}\,dt\right|\\
\lesssim 
&N_{1}^{s_{0}}N_{3}^{2}\|P_{N_{3}}w_{3}\|_{X^{0,b_{0}}}\|P_{N_{1}}h\|_{X^{0,1-b_{0}}}
\left(
N_{2}^{-1+2s_{1}}\sum_{|n_{1}|\sim N_{1}} \|d_{1}(n_{1})\|^{2}_{L_{t}^{2}}N_{2}^{200s_{1}}\max(\frac{N_{2}}{N_{1}^{1/3}},1)
\right)^{1/2}\\
\lesssim
&N_{3}^{2}N_{2}^{Cs_{1}}N_{2}^{-1/2}\left[\max\left(\frac{N_{2}}{N_{1}^{1/3}},1\right)
\right]^{1/2}\|P_{N_{1}}w_{1}\|_{X^{s_{0},b_{0}}}\|P_{N_{1}}h\|_{X^{0,1-b_{0}}}\|P_{N_{3}}w_{3}\|_{X^{s_{0},b_{0}}}.
\end{aligned}
\end{equation}
It is easy to see that the desired estimate will follow if there holds $N_{2}^{1/100}\geq N_{3}$.

When $N_{3}\geq N_{2}^{\frac{1}{100}}$, we may directly go back to \eqref{eq: reduced1}. Applying \eqref{deter2} and using \eqref{eq: crudexsb}, we have
\begin{equation}
\begin{aligned}
&N_1^{s_0}\left|\int \phi(t)\bar{h} \NN_1(P_{N_{1}}\phi(t)w_{1},P_{N_{2}} \phi(t)v_{2},P_{N_{3}} \phi(t)w_{3})\,dx dt\right|\\
\lesssim
&N_{2}^{Cs_{1}}\|P_{N_{1}}w_{1}\|_{X^{s_{0},b_{0}}}\|P_{N_{2}}\phi(t)v_{2}\|_{X^{0,b_{0}}}N_{3}^{-s_{0}}\|P_{3}w_{3}\|_{X^{s_{0,b_{0}}}}\|P_{N_{1}}h\|_{X^{0,1-b_{0}}}\\
\lesssim
&N_{2}^{Cs_{1}}N_{3}^{-s_{0}}\|P_{N_{1}}w_{1}\|_{X^{s_{0},b_{0}}}\|P_{N_{1}}h\|_{X^{0,1-b_{0}}}.
\end{aligned}
\end{equation}

Estimate \eqref{eq: reduced1} then follows since $N_{3}\geq N_{2}^{1/100}$ and $s_{1}\ll s_{0}$.

\section{Proof of Proposition \ref{prop: onemore}: case by case study, case $(c)$}\label{sec: ProofPartIII}
In this case, we have $N_{1}(I)\gtrsim N_2(II)\geq N_{3}(II)$, and we aim to prove
for all $v_{1}, w_{2}, w_{3}$ satisfying 
$$v_{1}=\phi(t)\sum_{n_{1}}\frac{g_{n_{1}}(\omega)}{|n_{1}|}e^{in_{1}x}, \quad \|w_{2}\|_{X^{s_{0},b_{0}}}\lesssim 1, \quad \|w_{3}\|_{X^{s_{0,b_{0}}}}\lesssim 1,$$ and $\|h\|_{X_{0,1-b_{0}}}\lesssim 1$, that (up to an exceptional set)
\begin{equation}\label{eq: ckey}
N_1^{s_0}\left|\int \phi(t/\delta)\bar{h}\NN_1(P_{N_{1}}v_{1}, P_{N_{2}}w_{2}, P_{N_{3}}w_{3})\,dxdt\right|\lesssim \delta^{\epsilon_{1}}, \text{ for some } \epsilon_{1}\gg \epsilon.
\end{equation}
\subsection{Standard reduction: a (detailed) sketch}
We first sketch a reduction, with an argument similar to the one   in Subsection \ref{subsec: sr}. There is indeed some difference between the reduction process in case (a) and case (c), mainly due to the difference of the form of the first term (with the highest frequency). Hence, we will still provide a rather detailed sketch. In all the remaining cases, we only briefly sketch the reduction.

 We may fix $N_{1,0}$ large satisfying $\frac{1}{\delta}=N_{1,0}^{100}$. By dropping a set of probability up to $e^{-N_{1,0}^{-c_{s_{1}}}}$, we have

\begin{equation}\label{eq: caseccrudepro}
\begin{cases}
|g_{n}(\omega)|\leq N_{1,0}^{s_{1}}, & |n|\leq N_{1,0},\\
|g_{n}(\omega)|\leq N_{1}^{s_{1}}, & |n|\sim N_{1}\geq N_{1,0}.
\end{cases}
\end{equation}
By further dropping a set of probability $e^{-N_{1,0}^{-c_{s_{1}}}}$ if necessary, one has 
\begin{equation}\label{eq: casecxsbpro}
\begin{cases}
\|P_{N_{1}}\phi(t)v_{1}\|_{X^{0,b_{0}}}\lesssim N_{1,0}^{s_{1}}, & N_{1}\leq N_{1,0},\\
\|P_{N_{1}}\phi(t)v_{1}\|_{X^{0,b_{0}}}+\|P_{N_{1}}\phi(t)v_{1}\|_{L_{t,x}^{\infty}}\lesssim N_{1}^{s_{1}}, & N_{1}\geq N_{1,0}.
\end{cases}
\end{equation}
We will assume \eqref{eq: caseccrudepro} and \eqref{eq: casecxsbpro} thoughout this section.
Now, split into two parts $N_{1}> N_{1,0}$ and $N_{1}\leq N_{1,0}$. 
For the low frequency part $N_{1}\leq  N_{1,0}$, we may use  \eqref{eq: casecxsbpro} and apply the deterministic estimate \eqref{deter1.2}, one thus derives the analogues of \eqref{eq: alow1} and \eqref{eq: alow2} below
\begin{equation}
\begin{aligned}
&N_{1}^{s_{0}}\left|\int \NN_{1}(P_{N_{1}}v_{1}, P_{N_{2}}w_{2},P_{N_{3}}w_{3})\bar{h}\phi(t/\delta)\right|\\
\lesssim 
&\delta^{s_{0}/8}N_{1,0}^{s_{0}+C\epsilon_{0}}N_{2}^{s_{0}}\|\phi(t)v_{1}\|_{X^{0,b_{0}}}\|w_{2}\|_{X^{s_{0},b_{0}}}\|w_{3}\|_{X^{s_{0},b_{0}}}\|h\|_{X^{0,1-b_{0}}}\\
\lesssim
&\delta^{s_{0}/8}N_{1,0}^{s_{0}+C\epsilon_{0}}N_{2}^{s_{0}}N_{1,0}^{s_{1}}.
\end{aligned}
\end{equation}
(Note that here we only need one estimate rather than two estimates as in \eqref{eq: alow1}, \eqref{eq: alow2}.)

Summing over $N_{1}\leq N_{1,0}$ and the associated $N_{2}, N_{3}$, and using the fact that $\delta^{-1}=N_{1,0}^{100}$, we derive the desired estimate
\begin{equation}
\sum_{N_{1}\leq N_{1,0}, N_1\gtrsim N_{2}\geq N_{3}}N_{1}^{s_{0}}\left|\int \NN_{1}(P_{N_{1}}v_{1}, P_{N_{2}}w_{2},P_{N_{3}}w_{3})\bar{h}\phi(t/\delta)\right|\lesssim \delta^{\epsilon_{1}}, \text{ for some }\epsilon_{1}\gg \epsilon_{0}.
\end{equation}

For the remaining part $N_{1}> N_{1,0}$, we will write $\phi(t/\delta)h$ as $\phi(t)\phi(t/\delta)h$ and note that one still has  $\|\phi(t/\delta)h\|_{X^{0,1-b_{0}}}\lesssim 1$. For notational convenience, we will still denote $\phi(t/\delta)h$ by $h$, and will prove for all $N_1\gtrsim N_{2}\geq N_{3}$ with $N_1>N_{1,0}$ that
\begin{equation}\label{eq: casecreduced}
N_1^{s_0}\left|\int \NN_{1}(P_{N_{1}}v_{1}, P_{N_{2}}w_{2},P_{N_{3}}w_{3})\bar{h}\phi(t)\right|\lesssim N_{1}^{-\epsilon_{1}}, \text{ for some } \epsilon_{1}\gg \epsilon_{0}.
\end{equation}
Then \eqref{eq: ckey} will follow from summing \eqref{eq: casecreduced} over $N_1, N_2, N_3$.

To see \eqref{eq: casecreduced}, we first introduce a parameter $M=N_{1}^{100s_{0}}$. 
\begin{rem}\label{rem: s0}
If one wants to get a rather large $s_{0}<1$, one may need to choose $M$ more carefully. The following argument should still be fine if one chooses $M=O(N_{1}^{(6+)s_{0}})$, where $6+$ denotes any number larger than $6$. However, it is unclear to us whether further improvement is possible. We don't further discuss this issue here. 
\end{rem}

As in Subsubsection \ref{disnoncasea}, we may split the functions $\phi(t)v_1=P_{|\tau|<M}\phi(t)v_{1}+P_{|\tau|>M}\phi(t)v_{1}$, $\phi(t)w_{i}=P_{|\tau_i-n_i^{2}|<M}\phi(t)w_{i}+P_{|\tau_i-n_i^{2}|>M}\phi(t)w_{i}$, $i=2,3$, and $\phi(t)h=P_{|\tau-n^{2}|<M}\phi(t)h+P_{|\tau-n^{2}|>M}\phi(t)h$. Applying the deterministic estimate \eqref{deter3.5}, we reduce the proof of  \eqref{eq: casecreduced} to the following estimate
\begin{equation}\label{eq: prerdfc}
N_1^{s_0}\left|\int \NN_{1}(P_{N_{1}}\psi(t)v_{1}, P_{N_{2}}P_{|\tau_{2}-n_{2}^{2}|\leq M}\phi(t)w_{2},P_{N_{3}}P_{|\tau_{3}-n^{2}_{3}|\leq M}\phi(t)w_{3})\overline{P_{|\tau-n^{2}|\leq M}\phi(t){h}}\right|\lesssim N_{1}^{-\epsilon_{1}}, 
\end{equation}for some $\epsilon_{1}\gg \epsilon_{0}$. Here $\psi(t)=P_{|\tau|<M}\phi(t)$ is a Schwartz function.

Write
\begin{equation}\label{eq: basecc}
\psi(t)v_{1}=\sum_{n_{1}} r_{1}(n_{1},t)e^{in_{1}\cdot x}, \,\, P_{|\tau_{i}-n_{i}^{2}|}\phi(t)w_{i}=\sum_{n_{i}}d_{i}(n_{i},t)e^{in_{i}\cdot x}, \,i=2,3, \,\, P_{|\tau-n^{2}|<M}\phi(t)h=\sum_{n}H(n,t)e^{in\cdot x},
\end{equation}and abbreviate the coefficients as $r_1(n_1)$, $d_i(n_i)$ and $H(n)$ as before, one has
\begin{equation}
\begin{aligned}
&N_1^{s_0}\left|\int \NN_{1}(P_{N_{1}}\psi(t)v_{1}, P_{N_{2}}P_{|\tau_{2}-n_{2}^{2}|\leq M}\phi(t)w_{2},P_{N_{3}}P_{|\tau_{3}-n^{2}_{3}|\leq M}\phi(t)w_{3})\overline{P_{|\tau-n^{2}|\leq M}\phi(t){h}}\right|\\
\leq& N_1^{s_0}\left|\sum_{
\begin{subarray}
\quad n_{i}\sim N_{i}, n_{1}-n_{2}+n_{3}=n, \, n_{2}\neq n_{1},n_{3}\\
n_{1}^{2}-n_{2}^{2}+n_{3}^{2}-n^{2}=O(M)=O(N_{1}^{100s_{0}})
\end{subarray}
}
\int r_{1}(n_{1})\overline{d_{2}(n_{2})}d_{3}(n_{3})\overline{H(n)}\,dt\right|.
\end{aligned}
\end{equation}
Observe that one has in this case the following estimates:
\begin{equation}\label{eq: basec}
\begin{cases}
N_{i}^{2s_{0}}\sum_{n_{i}}\|d_{i}(n_{i})e^{-n_{i}^{2}t}\|_{H_{t}^{b_{0}}}^{2}\lesssim \|P_{N_{i}}w_{i}\|_{X^{s_{0},b_{0}}}^{2}\lesssim 1,\, i=2.3,\\
r_{1}(n_{1},t)=\psi(t)\frac{g_{n_{1}}(\omega)}{|n_{1}|}e^{in_{1}\cdot x+in_{1}^{2}t}, \text{ where } \psi \text{ Schwartz},\\
\sum_{n\sim N_{1}} \|H(n)e^{-in^{2}t}\|_{H_{t}^{1-b_{0}}}^{2}\lesssim \|P_{N_{1}}h\|_{X^{0,1-b_{0}}}^{2}.
\end{cases}
\end{equation}
We also point out that $\|f(t)e^{i\theta t}\|_{L_{t}^{p}}=\|f\|_{L_{t}^{p}}.$
Thus, it remains to prove the following lemma: 
\begin{lem}\label{lem: rdfc}
Assuming \eqref{eq: basec}, for $N_{1}> N_{1,0}$, one has (up to an extra exceptional set of probability $e^{-N_{1}^{c}}$) that
\begin{equation}\label{eqn: lemma c}
N_1^{s_0}\left| \sum_{
\begin{subarray}
\quad n_{i}\sim N_{i}, n_{1}-n_{2}+n_{3}=n,  \, n_{2}\neq n_{1},n_{3}\\
n_{1}^{2}-n_{2}^{2}+n_{3}^{2}-n^{2}=O(M)=O(N_{1}^{100s_{0}})
\end{subarray}
}
\int r_{1}(n_{1})\overline{d_{2}(n_{2})}d_{3}(n_{3})\overline{H(n)}\,dt\right|\lesssim N_{1}^{-\epsilon_{1}}, \text{ for some } \epsilon_{1}\gg \epsilon_{0}.
\end{equation}
\end{lem}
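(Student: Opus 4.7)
The expression in \eqref{eqn: lemma c} is linear in the Gaussians $g_{n_{1}}$: using $r_{1}(n_{1},t)=\psi(t)\,g_{n_{1}}(\omega)\,e^{in_{1}^{2}t}/|n_{1}|$, the quantity to estimate is $N_{1}^{s_{0}}|S_{0}(\omega)|$ with $S_{0}=\sum_{n_{1}\sim N_{1}}g_{n_{1}}B(n_{1})$ and $B(n_{1})=T(n_{1})/|n_{1}|$, where $T(n_{1})$ denotes the inner sum-integral in \eqref{eqn: lemma c}. By the degree-one case of Lemma \ref{lem: generalwiener}, the associated large deviation gives $\PPP(|S_{0}|>\lambda)\lesssim\exp(-c\lambda^{2}/V)$ with $V:=\|S_{0}\|_{L^{2}(\Omega)}^{2}=\sum_{n_{1}}|B(n_{1})|^{2}$, so the plan is to prove $V\lesssim N_{1}^{-2s_{0}-2\epsilon_{1}-c}$ for some $c>0$ and then take $\lambda=N_{1}^{-s_{0}-\epsilon_{1}}$ to recover \eqref{eqn: lemma c} outside an exceptional set of measure $\lesssim e^{-N_{1}^{c}}$.

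To bound $V$, I would introduce the physical-space profiles $F(t,x):=\sum_{n_{2}}d_{2}(n_{2},t)e^{in_{2}\cdot x}$ and analogously $G,K$ from $d_{3},H$, localized in spatial frequency at $N_{2},N_{3},N_{1}$. A direct calculation identifies $T(n_{1})$, up to the phase $e^{in_{1}^{2}t}$ and the cutoff $\psi$, as the $(-n_{1})$-th spatial Fourier coefficient of $\bar{F}G\bar{K}$; the resonance restriction $\mu=O(M)$ in $T$ is automatically enforced by the temporal Fourier supports $|\tau-n^{2}|<M$ of $d_{2},d_{3},H$, so that relaxing to the unconstrained sum introduces only negligible error. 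Cauchy--Schwarz in $t$ (using $\|\psi\|_{L^{2}_{t}}\lesssim 1$) followed by Plancherel in $x$ then yields $V\lesssim N_{1}^{-2}\,\|FGK\|_{L^{2}_{t,x}}^{2}$. For the trilinear $L^{2}$ bound I would apply H\"older with the split $L^{4}\cdot L^{\infty}\cdot L^{4}$, placing the lowest-frequency factor $G$ in $L^{\infty}_{t,x}$: Strichartz \eqref{eq: Xsbstri} gives $\|F\|_{L^{4}_{t,x}}\lesssim N_{2}^{\epsilon-s_{0}}$; for $K$, its $P_{|\tau-n^{2}|<M}$-localization upgrades the $X^{0,1-b_{0}}$ norm to $X^{0,1/2+\epsilon'}$ at cost $M^{\epsilon_{0}+\epsilon'}$, so $\|K\|_{L^{4}_{t,x}}\lesssim N_{1}^{Cs_{0}\epsilon_{0}}$; and for $G$, Bernstein combined with $X^{0,b_{0}}\hookrightarrow L^{\infty}_{t}L^{2}_{x}$ gives $\|G\|_{L^{\infty}_{t,x}}\lesssim N_{3}^{1-s_{0}}$. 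Combining,
\begin{equation*}
V\;\lesssim\;N_{1}^{-2+Cs_{0}\epsilon_{0}}\,N_{2}^{-2s_{0}+O(\epsilon)}\,N_{3}^{2-2s_{0}},
\end{equation*}
which is $\lesssim N_{1}^{-2s_{0}-c}$ uniformly over $N_{2},N_{3}\leq N_{1}$: the worst case $N_{2}\sim N_{3}\sim N_{1}$ yields $N_{1}^{-4s_{0}+O(\epsilon_{0}+\epsilon)}$, while when $N_{3}\ll N_{1}$ the factor $N_{3}^{2-2s_{0}}\leq N_{1}^{2-2s_{0}}$ combines with $N_{1}^{-2}$ to give $N_{1}^{-2s_{0}}$ and $N_{2}^{-2s_{0}}$ supplies the remaining decay.

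The hard part will be balancing the probabilistic gain against the loss $N_{1}^{Cs_{0}\epsilon_{0}}$ coming from the $X^{s,b}$-norm conversion of $K$, driven by $M=N_{1}^{100s_{0}}$; this is managed by enforcing the standing parameter hierarchy $s_{0}\gg s_{1}\geq\epsilon_{1}\gg\epsilon_{0}$ so that $s_{0}\epsilon_{0}$ is much smaller than $\epsilon_{1}$ and $c$. A secondary technical subtlety is making the Plancherel step precise, since the unconstrained trilinear quantity $\|FGK\|_{L^{2}_{t,x}}$ need not agree exactly with the resonance-constrained sum $\sum_{n_{1}}|T(n_{1})|^{2}$; however the sharp projections $P_{|\tau-n^{2}|<M}$ on $d_{2},d_{3},H$ (together with the $O(1)$ bandwidth of $\psi$) essentially rule out non-resonant interactions, and one may replace the sharp projections by smooth variants as in Remark \ref{rem: timelocal} if needed.
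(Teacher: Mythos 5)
Your reduction of the lemma to a variance bound has a fatal probabilistic flaw: the coefficients $B(n_{1})=T(n_{1})/|n_{1}|$ in your linear Gaussian sum $S_{0}=\sum_{n_{1}}g_{n_{1}}B(n_{1})$ are built from $d_{2},d_{3},H$, i.e.\ from the deterministic inputs $w_{2},w_{3},h$. These are not fixed data: $w_{2},w_{3}$ range over the unit ball of $X^{s_{0},b_{0}}$ (they are the unknowns in the Picard iteration) and $h$ ranges over the dual unit ball. The large deviation estimate of Lemma \ref{lem: generalwiener} gives, for each \emph{fixed} triple $(w_{2},w_{3},h)$, an exceptional set $E(w_{2},w_{3},h)$ of measure $e^{-N_{1}^{c}}$, but Proposition \ref{prop: onemore} requires a single exceptional set outside of which the estimate holds for \emph{all} admissible $(w_{2},w_{3},h)$ simultaneously. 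A union bound over this (uncountable, effectively $\sim N_{1}^{2}M$-dimensional) family destroys the gain $e^{-N_{1}^{c}}$; an $\varepsilon$-net argument fares no better, since the net has $\gtrsim C^{N_{1}^{2}M}$ elements. This is precisely the standard pitfall that the paper's argument is designed to avoid: after Cauchy--Schwarz in $n$ and in $n_{3}$ (or $n_{2}$), the deterministic functions are peeled off through $\sum_{n_{2}}\|d_{2}(n_{2})\|_{L_{t}^{2}}^{2}\lesssim 1$, and the randomness is isolated in the quantities $\max_{n}\sum_{n_{2}}|\sigma(n,n_{2})|^{2}+(\sum_{n\neq n'}|\sum_{n_{2}}\sigma(n,n_{2})\overline{\sigma(n',n_{2})}|^{2})^{1/2}$ of Lemma \ref{lem: matrixtrick2}, which depend only on $\omega$ and the frequency parameters. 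The large deviation estimate is applied to \emph{those}, so the exceptional set is independent of $w$ and $h$ (and even then the paper must track, as in the discussion following \eqref{eq: c7}, that only polynomially many such sets are discarded).

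Setting this aside, your deterministic variance computation (Cauchy--Schwarz in $t$, Plancherel in $x$, then $L^{4}\cdot L^{\infty}\cdot L^{4}$ H\"older with Strichartz and Bernstein) is essentially sound and the numerology does close in all regimes of $N_{2},N_{3}\leq N_{1}$; it also correctly notes that the resonance constraint is enforced by the temporal Fourier supports. But because it cannot be combined with a $w$-independent probabilistic input, it does not prove the lemma. The fix is structural, not cosmetic: you must restructure the argument so that the random object whose tail you estimate contains no trace of $w_{2},w_{3},h$ --- which is what the paper's three estimates \eqref{eq: c3}, \eqref{eq: c2}, \eqref{eq: c9}, combined with the counting Lemmata \ref{lem: circle} and \ref{lem: count strong} and a case analysis on the relative sizes of $N_{1},N_{2},N_{3}$, accomplish.
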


\subsection{Random data type estimate: Proof of Lemma \ref{lem: rdfc}}

We derive three different estimates, which, combined together, will imply the desired bound.

First, one can directly go back to \eqref{eq: casecreduced}, and use estimate \eqref{deter2} and \eqref{eq: casecxsbpro}
to derive
\begin{equation}\label{eq: c3}
\begin{aligned}
&N_{1}^{s_{0}}\left|\int \NN_{1}(P_{N_{1}}v_{1}, P_{N_{2}}w_{2},P_{N_{3}}w_{3})\bar{h}\phi(t)\right|\\
\lesssim 
&N_{1}^{s_{0}}N_{1}^{s_{1}+C\epsilon_{0}}\|P_{N_{1}}v_{1}\|_{X^{0,b_{0}}}N_{2}^{-s_{0}}\|P_{N_{2}}w_{2}\|_{X^{s_{0},b_{0}}}N_{3}^{-s_{0}}\|P_{N_{3}}w_{3}\|_{X^{s_{0},b_{0}}}\\
\lesssim &(N_{1}N_{2}^{-1}N_{3}^{-1})^{s_{0}}N_{1}^{Cs_{1}}.
\end{aligned}
\end{equation}

One can see easily that  the same bound  works for the left hand side of (\ref{eqn: lemma c}) as well. When $N_{1}\sim N_{2}$, one can directly use \eqref{eq: c3} to derive the desire estimate unless $\ln N_{3}\ll \ln N_{1}$. In particular, There is no need to consider the subcase $N_{1}\sim N_{2}\sim N_3$.

Next, by applying Cauchy inequality in $n$, one obtains
\begin{equation}\label{eq: starcc}
\begin{aligned}
&N_1^{s_0} \left|\sum_{
\begin{subarray}
\quad n_{i}\sim N_{i}, n_{1}-n_{2}+n_{3}=n,  \, n_{2}\neq n_{1},n_{3}\\
n_{1}^{2}-n_{2}^{2}+n_{3}^{2}-n^{2}=O(M)=O(N_{1}^{100s_{0}})
\end{subarray}
}
\int r_{1}(n_{1})\overline{d_{2}(n_{2})}d_{3}(n_{3})\overline{H(n)}\,dt\right|\\
\lesssim& N_1^{s_0}\left(\sum_{n} \|H(n)\|_{L_{t}^{2}}^{2}\right)^{1/2}\left(\sum_{n} 
\Big\|\sum_{
\begin{subarray}
\quad n_{i}\sim N_{i}, n_{1}-n_{2}+n_{3}=n, \,n_{2}\neq n_{1},n_{3}\\
n_{1}^{2}-n_{2}^{2}+n_{3}^{2}-n^{2}=O(N_{1}^{100s_{0}})
\end{subarray}
}r_{1}(n_{1})d_{2}(n_{2})d_{3}(n_{3})\Big\|_{L_{t}^{2}}
^{2}\right)^{1/2}\\
\lesssim &N_1^{s_0}\left(\sum_{n} 
\Big\|\sum_{
\begin{subarray}
\quad n_{i}\sim N_{i}, n_{1}-n_{2}+n_{3}=n, \,n_{2}\neq n_{1},n_{3}\\
n_{1}^{2}-n_{2}^{2}+n_{3}^{2}-n^{2}=O(N_{1}^{100s_{0}})
\end{subarray}
}r_{1}(n_{1})d_{2}(n_{2})d_{3}(n_{3})\Big\|_{L_{t}^{2}}
^{2}\right)^{1/2}
\end{aligned}
\end{equation}
In all the summations below, we always have $|n_{i}|\sim N_{i}$,
$n_{1}-n_{2}+n_{3}=n$,  $n_{1}^{2}-n_{2}^{2}+n_{3}^{2}-n^{2}=O(M), n_{2}\neq n_{1}, n_{3}$, and we sometimes omit them for notational convenience.

One also observes that $n_{1}-n_{2}+n_{3}=n$ and $n_{1}^{2}-n_{2}^{2}+n_{3}^{2}-n^{2}=O(M)$ imply
\begin{equation}
\langle n_{2}-n_{1}, n_{2}-n_{3}\rangle=O(M), \quad \langle n_{3}-n, n_{3}-n_{2} \rangle=O(M).
\end{equation}

We have the following further estimate:
\begin{equation}\label{eq: c1}
\begin{aligned}
&\sum_{n} 
\Big\|\sum_{
\begin{subarray}
\quad n_{i}\sim N_{i}, n_{1}-n_{2}+n_{3}=n, \,n_{2}\neq n_{1},n_{3}\\
n_{1}^{2}-n_{2}^{2}+n_{3}^{2}-n^{2}=O(N_{1}^{100s_{0}})
\end{subarray}
}r_{1}(n_{1})d_{2}(n_{2})d_{3}(n_{3})\Big\|_{L_{t}^{2}}
^{2}\\
\lesssim & \sum_n \left(\sum_{n_2} \|d_2(n_2)\|_{L_t^\infty} \Big\|\sum_{n_1,n_3} r_1(n_1)d_3(n_3) \Big\|_{L_t^2}\right)^2\\
\lesssim 
&\left(\sum_{n_{2}}\|d_{2}(n_{2})\|^{2}_{L_{t}^{\infty}}\right)\sum_{n,n_{2}}\Big\|
\sum_{n_{1}, n_{3}\neq n_{2}, \langle n_{2}-n_{1}, n_{2}-n_{3}\rangle=O(M) }
\frac{g_{n_{1}}}{|n_{1}|}e^{in_{1}^{2}t}\psi(t)d_{3}(n_{3})\Big\|_{L_{t}^{2}}^2,
\end{aligned}
\end{equation}where we used Cauchy inequality in $n_{2}$ in the last line. Since $\|d_2(n_2)\|_{L^\infty_t}\lesssim \|d_{2}(n_{2})e^{in_{2}\cdot x}\|_{X^{0,b_{0}}}$, when $N_1\gg N_2$, the above is further bounded by
\begin{equation}
\begin{aligned}
\lesssim &N_{2}^{-2s_{0}}\sum_{n,n_{2}}\Big\|
\sum_{n_{1}, n_{3}\neq n_{2}, \langle n_{2}-n_{1}, n_{2}-n_{3}\rangle=O(M) }
\frac{g_{n_{1}}}{|n_{1}|}e^{in_{1}^{2}t}\psi(t)d_{3}(n_{3})\Big\|_{L_{t}^{2}}^2\\
\lesssim &N_{2}^{-2s_{0}}\sup_{n,n_{2}}\sharp\{n_{3}:\, \langle n_{3}-n_{2}, n_{3}-n\rangle=O(N_{1}^{100s_{0}})\}\sum_{\substack{n_i:\, n_2\neq n_1,n_3, \\ \langle n_{3}-n_{2}, n_{3}-n \rangle=O(N_{1}^{100s_{0}})}}\|r_{1}(n_{1})d_{3}(n_{3})\|_{L_{t}^{2}}^{2}\\
\lesssim &N_{2}^{-2s_{0}}N_{1}^{100s_{0}}\max(\frac{N_{3}}{N_{1}^{1/3}},1)\sum_{\substack{n_i:\, n_2\neq n_1,n_3, \\ \langle n_{3}-n_{2}, n_{3}-n \rangle=O(N_{1}^{100s_{0}})}}N_{1}^{-2+2s_{1}}\|d_{3}(n_{3})\|_{{\color{black}{L_{t}^{2}}}}^{2}\\
\lesssim &N_{1}^{Cs_{0}}N_{1}^{-2}\max(\frac{N_{3}}{N_{1}^{1/3}},1)\sup_{n_{3}}\sharp\{(n_{1},n_{2}):\,{\langle n_{2}-n_{1}, n_{2}-n_{3} \rangle=O(N_{1}^{100s_{0}})}, \,n_{2}\neq n_{1},n_{3}\}\sum_{n_{3}}\|d_{3}(n_{3})\|_{L_{t}^{2}}^{2}\\
\lesssim &N_{1}^{Cs_{0}}N_{1}^{-2}\max(\frac{N_{3}}{N_{1}^{1/3}},1)N_{1}N_{2}.
\end{aligned}
\end{equation}
In the above sequence of estimates, we used H\"older's inequality in the second line, and a variant of the counting Lemma \ref{lem: circle} in the third line (note that we assume $N_{1}\gg N_{2}$, thus one necessarily has $|n|\sim N_{1}$). In the last line we applied the counting Lemma \ref{lem: count strong}. Moreover, note that the $\psi(t)$ in $r_{1}(n_{1})$ gives enough decay in $t$, hence one has $\|d_{i}(n_{i})\|_{L_{t}^{\infty}}=\|d_{i}(n_{i})e^{-n_{i}^{2}t}\|_{L_{t}^{\infty}}\lesssim \|d_{i}(n_{i})e^{-n_{i}^{2}t}\|_{H_{t}^{b_{0}}}$. 

To summarize, when $N_{1}\gg N_{2}$, one has the second estimate 
\begin{equation}\label{eq: c2}
N_1^{s_0} \sum_{
\begin{subarray}
\quad n_{i}\sim N_{i}, n_{1}-n_{2}+n_{3}=n,  \, n_{2}\neq n_{1},n_{3}\\
n_{1}^{2}-n_{2}^{2}+n_{3}^{2}-n^{2}=O(M)=O(N_{1}^{100s_{0}})
\end{subarray}
}
\left|\int r_{1}(n_{1})\overline{d_{2}(n_{2})}d_{3}(n_{3})\overline{H(n)}\,dt\right|\lesssim N_{1}^{Cs_{0}}\frac{N_{2}^{1/2}}{N_{1}^{1/2}}\max\left(\frac{N_{3}^{1/2}}{N_{1}^{1/6}},1\right).
\end{equation}

One may also make use of the  Frobenius norm that  is more suitable when one deals with random data since it exploits better the independence of the random variables involved. The Frobenius  norm together a version of the Cauchy-Schwarz inequality recalled in \eqref{eq: m2} will give the  third estimate. We start from \eqref{eq: starcc} again. By the same argument in (\ref{eq: c1}), one has
\begin{equation}\label{eq: c4}
\begin{aligned}
&\sum_{n} 
\Big\|\sum_{
\begin{subarray}
\quad n_{i}\sim N_{i}, n_{1}-n_{2}+n_{3}=n, \,n_{2}\neq n_{1},n_{3}\\
n_{1}^{2}-n_{2}^{2}+n_{3}^{2}-n^{2}=O(N_{1}^{100s_{0}})
\end{subarray}
}r_{1}(n_{1})d_{2}(n_{2})d_{3}(n_{3})\Big\|_{L_{t}^{2}}
^{2}\\
\lesssim 
&\sum_{n,n_{3}}\Big\|
\sum_{n_{1}, n_{3}\neq n_{2}, \langle n_{2}-n_{1}, n_{2}-n_{3}\rangle=O(M) }
\frac{g_{n_{1}}}{|n_{1}|}e^{in_{1}^{2}t}\psi(t)d_{2}(n_{2})\Big\|_{L_{t}^{2}}^2,
\end{aligned}
\end{equation}
and by applying  the recalled  Cauchy-Schwarz inequality \eqref{eq: m2},  we can  further bound this expression  by
\begin{equation}
\begin{aligned}
\lesssim &N_{3}^{2}\sup_{n_{3}}\sum_{n}\Big\|\sum_{n_{2}}A(n,n_{2})d_{2}(n_{2})\Big\|^2_{L_{t}^{2}}\\
\lesssim 
&N_{3}^{2}\sup_{n_{3}}\left( \max_{n}\left(\sum_{n_{2}}\|A(n, n_{2})\|_{L_{t}^{\infty}}^{2}\right)+\left(\sum_{n\neq n'}\Big\|\sum_{n_{2}}A(n, n_{2})\overline{A({n',n_{2}})}\Big\|^{2}_{L_{t}^{\infty}}\right)^{1/2} \right),
\end{aligned}
\end{equation}
where we defined
\begin{equation}
A(n,n_{2})=A(n,n_{2},t)=
\begin{cases}
r_{1}(n+n_{2}-n_{3}), & \text{ if } \langle n-n_{3}, n_{2}-n_{3} \rangle=O(N_{1}^{100s_{0}}),\\
0,& \text{ otherwise}.
\end{cases}
\end{equation}
In the last line we also used $\sum_{n_{2}}\|d_{2}(n_{2})\|_{L_{t}^{2}}^{2}\lesssim 1$. For the sake of convenience, we also define
\begin{equation}
\sigma(n,n_{2})=
\begin{cases}
\frac{g_{n+n_{2}-n_{3}}}{N_{1}}, & \text{ if } \langle n-n_{3}, n_{2}-n_{3} \rangle=O(N_{1}^{100s_{0}}),\\
0, &\text{ otherwise}
\end{cases}
\end{equation}for later use.

 \begin{rem}\label{rem: matrixlinfty}
  By dropping an extra set of probability $e^{-N_{1}^{c}}$, one can in fact estimate 
$$\max_{n}\left(\sum_{n_{2}}\|A(n, n_{2})\|_{L_{t}^{\infty}}^{2}\right)+\left(\sum_{n\neq n'}\Big\|\sum_{n_{2}}A(n, n_{2})\overline{A({n',n_{2}})}\Big\|^{2}_{L_{t}^{\infty}}\right)^{1/2}$$ as $$\max_{n}\left(\sum_{n_{2}}|\sigma(n, n_{2})|^{2}\right)+\left(\sum_{n\neq n'}\Big|\sum_{n_{2}}\sigma(n, n_{2})\overline{\sigma({n',n_{2}})}\Big|^{2}\right)^{1/2}.$$ 

To see this, observe that $N_{1}\sim |n_{1}|=(n_{2}-n_{3}+n|$, and for any fixed $t\in [0,1]$, the estimate of 
\[
\max_{n}\left(\sum_{n_{2}}|A(n, n_{2})(t)|^{2}\right)+\left(\sum_{n\neq n'}\Big|\sum_{n_{2}}A(n, n_{2})(t)\overline{A({n',n_{2}})(t)}\Big|^{2}\right)^{1/2}.
\]is just the same as 
\[
\max_{n}\left(\sum_{n_{2}}|\sigma(n, n_{2})|^{2}\right)+\left(\sum_{n\neq n'}\Big|\sum_{n_{2}}\sigma(n, n_{2})\overline{\sigma({n',n_{2}})}\Big|^{2}\right)^{1/2}.
\]Now one can simply mimic the argument in the proof of  Lemma \ref{lem: linfinity} to go from a single $t$ to a collection of $\{t_{n}\} $ in $[0,1]$ so that $|t_{i}-t_{j}|\leq N_{1}^{-3}.$ and then go to $L_{t}^{\infty}[0,1]$. Then, finally, one can use the fact that there is a Schwartz function $\psi(t)$ multiplied inside each $r_{1}$ to go from $L_{t}^{\infty}[0,1]$ to $L_{t}^{\infty}(\mathbb{R})$. We omit the details.
\end{rem}

In the following and throughout the rest of the article, we will estimate instead the term 
\[
\max_{n}\left(\sum_{n_{2}}|\sigma(n, n_{2})|^{2}\right)+\left(\sum_{n\neq n'}\Big|\sum_{n_{2}}\sigma(n, n_{2})\overline{\sigma({n',n_{2}})}\Big|^{2}\right)^{1/2},\]and we don't repeat the similar reduction in the rest of the article.

We fix $n_{3}$. Note that for each $n$, there are at most $\sim N_{2}N_{1}^{100s_{0}}$ choices of $n_{2}$ so that 
$|n_{2}|\sim N_{2}$ and $\langle n_{3}-n, n_{3}-n_{2}\rangle=O(N_{1}^{100s_{0}})$. Hence,
\begin{equation}\label{eq: c6}
\max_{n}\left(\sum_{n_{2}}|\sigma(n, n_{2})|^{2}\right)\lesssim N_{2}N_{1}^{Cs_{0}}N_{1}^{-2}.
\end{equation}

For the non-diagonal term, we first observe that for all $n\neq n'$ fixed, up to an exceptional set of probability $e^{-N_{1}^{c_\epsilon}}$, one can apply Lemma \ref{lem: generalwiener} to derive
\begin{equation}\label{eq: c7}
\left|\sum_{n_{2}}\sigma(n, n_{2})\overline{\sigma(n',n_{2})}\right|\lesssim_{\epsilon} N_{1}^{\epsilon/2} \EEE\left(\Big|\sum_{n_{2}}\sigma(n, n_{2})\overline{\sigma(n',n_{2})}\Big|^{2}\right)^{1/2}.
\end{equation}This implies that
\begin{equation}
\begin{aligned}
&\quad \left|\sum_{n_{2}}\sigma(n, n_{2})\overline{\sigma(n',n_{2})}\right|^{2}\\ & \lesssim_{\epsilon} N_{1}^{\epsilon} \EEE\Big|\sum_{n_{2}}\sigma(n, n_{2})\overline{\sigma(n',n_{2})}\Big|^{2}\\ &
\sim N_{1}^{\epsilon-4}\sharp\{n_{2}:\, \langle n_{3}-n_{2}, n_{3}-n\rangle=O(N_{1}^{100s_{0}}), \,\langle n_{3}-n_{2}, n_{3}-n'\rangle=O(N_{1}^{100s_{0}}),\, n_{2}\neq n_1, n_{3}\}.
\end{aligned}
\end{equation}
Therefore, dropping an exceptional set of probability $N_{1}^{4}e^{-N_{1}^{c_{\epsilon}}}\sim e^{-N_{1}^{c}}$, we have
\begin{equation}\label{eq: c8}
\begin{aligned}
 & \sum_{n\neq n'}\Big|\sum_{n_{2}}\sigma(n, n_{2})\overline{\sigma({n',n_{2}})}\Big|^{2}\\
  \lesssim &N_{1}^{\epsilon-4}\sharp\{(n,n',n_{2}):\, n\neq n', \,\langle n_{3}-n_{2}, n_{3}-n\rangle=O(N_{1}^{100s_{0}}),  \,\langle n_{3}-n_{2}, n_{3}-n'\rangle=O(N_{1}^{100s_{0}}), \, n_{2}\neq n_1, n_{3}\}.
 \end{aligned}
\end{equation}
Counting first all the possible pairs of $(n,n_{2})$ by $N_{1}^{1+Cs_{0}}N_{2}$ (Lemma \ref{lem: count strong}), and by the Wick ordered condition $n_{3}\neq n_{2}$, which further gives at most $\sim N_{1}^{1+Cs_0}$ possible $n'$, we derive
\begin{equation}
\left(\sum_{n\neq n'}\Big|\sum_{n_{2}}\sigma(n, n_{2})\overline{\sigma({n',n_{2}})}\Big|^{2}\right)^{1/2}\lesssim N_{1}^{-1+C s_{0}}N_{2}^{1/2},
\end{equation}which obviously dominates the bound (\ref{eq: c6}) for the diagonal term.

To summarize, we can go back to \eqref{eq: c4} and derive our third estimate
\begin{equation}\label{eq: c9}
N_1^{s_0} \sum_{
\begin{subarray}
\quad n_{i}\sim N_{i}, n_{1}-n_{2}+n_{3}=n,  \, n_{2}\neq n_{1},n_{3}\\
n_{1}^{2}-n_{2}^{2}+n_{3}^{2}-n^{2}=O(M)=O(N_{1}^{100s_{0}})
\end{subarray}
}
\left|\int r_{1}(n_{1})\overline{d_{2}(n_{2})}d_{3}(n_{3})\overline{H(n)}\,dt\right|\lesssim N_{1}^{Cs_{0}}N_{1}^{-1/2}N_{2}^{1/4}N_{3}.
\end{equation}

To complete the argument, note that the case $N_{1}\sim N_{2}$ will follow from estimates \eqref{eq: c3} and \eqref{eq: c9}. Indeed, consider two subcases. In the case $N_{1}\geq N_{3}^{100}$, we use estimate \eqref{eq: c9}, and when $N_{1}< N_{3}^{100}$, we use estimate \eqref{eq: c3}. When $N_{1}\gg N_{2}$ (hence estimate \eqref{eq: c2} also holds), we also consider several subcases. In subcase $N_{2}N_{3}\geq N_{1}^{11/10}$, we use estimate \eqref{eq: c3}. In the case $N_{2}N_{3}< N_{1}^{11/10}$ and $N_{3}\geq N_{1}^{1/3}$, we use estimate \eqref{eq: c2}. In the case $N_{2}N_{3}< N_{1}^{11/10}$ and $N_{3}< N_{1}^{1/3}$, if $N_{2}\leq N_{1}^{9/10}$, we use estimate \eqref{eq: c2}. Finally, if $N_{2}N_{3}< N_{1}^{11/10}$, $N_{3}< N_{1}^{1/3}$ but $N_{2}> N_{1}^{9/10}$, there must hold $N_{3}\leq N_{1}^{1/5}$, hence one can use estimate \eqref{eq: c9}.

\section{Proof of Proposition \ref{prop: onemore}: Remaining cases}\label{sec: ProofpartIV}
We present the proof of the remaining cases. Note that in each case, the desired estimate will be reduced to the resonant part similarly as in the previous two sections, and we will only briefly sketch the reduction. It is unclear whether Case (a) and Case (c) are the hardest two cases, however, all the essential arguments required to treat the rest of the cases have already appeared in the previous two sections.

We will use the following notations throughout the section. Let $\|w_{i}\|_{X^{s_{0},b_{0}}}\lesssim 1$, $i=1,2,3$, $v_{i}=\phi(t)\sum_{|n_{i}|\sim N_{i}}\frac{g_{n_{i}}(\omega)}{|n_{i}|}e^{in_{i}\cdot x+in_{i}^{2}t}$, and $\|h\|_{X^{0,1-b_{0}}}\lesssim 1$.

Let $M$ be a parameter that will be specified in each of the cases, $r_{i}(n_{i},t)$ be the space Fourier transform of ${\color{black}{(P_{|\tau|<M}\phi(t))v_{i}}}$, and $d_{i}(n_{i},t)$ be the space Fourier transform of $P_{|\tau_i-n_i^2|<M}\phi(t)w_{i}$, $i=1,2,3$, and $H(n,t)$ be the space Fourier transform\footnote{As one sees in the previous two sections, the function $h$ here is actually $\phi(t/\delta)h$, whose $X^{0,1-b_{0}}$ norm is also bounded uniformly in $\delta$.} of $P_{|\tau-n^{2}|\leq M}\phi(t)h$. We will sometimes abbreviate $r_{i}(n_{i},t), d_{i}(n_{i},t), H(n,t)$ as $r_{i}(n_{i}), d_{i}(n_{i}), H(n)$ respectively. 

Similarly to \eqref{eq: basea} and \eqref{eq: basec},  one always has the following estimates:
\begin{equation}\label{eq: baseremaining}
\begin{cases}
N_{i}^{2s_{0}}\sum_{n_{i}}\|d_{i}(n_{i})e^{-n_{i}^{2}t}\|_{H_{t}^{b_{0}}}^{2}\lesssim \|P_{N_{i}}w_{i}\|_{X^{s_{0},b_{0}}}^{2}\lesssim 1,\\
r_{i}(n_{i},t)=\psi(t)\frac{g_{n_{i}}(\omega)}{|n_{i}|}e^{in_{i}\cdot x+in_{i}^{2}t}, \text{ where } \psi \text{ is a Schwartz function},\\
\sum_{n\sim N_{1}} \|H(n)e^{-in^{2}t}\|_{H_{t}^{1-b_{0}}}^{2}\lesssim \|P_{N_{1}}h\|_{X^{0,1-b_{0}}}^{2}.
\end{cases}
\end{equation}

\subsection{Case (b): $N_1(II)\geq N_3(I)\geq N_2(II)$}
This part is similar to Case (a). After handling the low-frequency part using deterministic estimates and localization in time, 
we aim to prove for all $N_{3}\geq N_{3,0}$ (where $N_{3,0}^{100}=\delta^{-1}$), one has up to an exceptional set  of probability $e^{-N_{3}^{c}}$ that
\begin{itemize}
\item when $N_{1}\sim N_{3}$,
\begin{equation}\label{eq: reduced1b}
\begin{aligned}
N_{1}^{s_{0}}\left|\int \NN_{1}(P_{N_{1}}w_{1}, P_{N_{2}}{w}_{2},P_{N_{3}}v_{3})\bar{h}\phi(t)\right|&\lesssim N_{3}^{-\epsilon_{1}}\|P_{N_{1}}w_{1}\|_{X^{s_{0},b_{0}}}\|P_{<N_{1}}h\|_{X^{0,1-b_{0}}}\\
&\sim  N_{1}^{-\epsilon_{1}}\|P_{N_{1}}w_{1}\|_{X^{s_{0},b_{0}}}\|P_{<N_{1}}h\|_{X^{0,1-b_{0}}};
\end{aligned}
\end{equation}\item  when $N_{1}\gg N_{3}$,
\begin{equation}\label{eq: reduced2b}
N_{1}^{s_{0}}\left|\int \NN_{1}(P_{N_{1}}w_{1}, P_{N_{2}}{w}_{2},P_{N_{3}}v_{3})\bar{h}\phi(t)\right|\lesssim N_{3}^{-\epsilon_{1}}\|P_{N_{1}}w_{1}\|_{X^{s_{0},b_{0}}}\|P_{N_{1}}h\|_{X^{0,1-b_{0}}}.
\end{equation}
\end{itemize}

Note that up to an exceptional set of probability $e^{-N_{3}^{c}}$ , we can assume that
\begin{equation}\label{eq: crudeproc}
|g_{n_{3}}(\omega)|+\|P_{N_{3}}\phi(t)v_{3}\|_{X^{0,b_{0}}}\leq N_{3}^{s_{1}}
\end{equation}
 for all $N_{3}\geq N_{3,0}$, $|n_{3}|\sim N_{3}$.

It also suffices to assume 
\begin{equation}\label{eq: bnum}
N_{3}\geq N_{2}^{1000}.
\end{equation}
Indeed, if $N_3<N_2^{1000}$, from the deterministic estimate \eqref{deter2} and the bound \eqref{eq: crudeproc}, one obtains
\begin{equation}\label{eq: deterministicb}
\begin{aligned}
N_{1}^{s_{0}}\left|\int \NN_{1}(P_{N_{1}}w_{1}, P_{N_{2}}{w}_{2},P_{N_{3}}v_{3})\bar{f}\phi(t)\right|&\lesssim N_{3}^{Cs_1}N_2^{-s_0}\|P_{N_1}w_{1}\|_{X^{s_{0},b_{0}}}\|f\|_{X^{0,1-b_0}},
\end{aligned}
\end{equation}
where $f=P_{N_{1}}h$ or $P_{<N_{1}}h$, hence \eqref{eq: reduced1b} and \eqref{eq: reduced2b} follow.

In the following, we will only prove \eqref{eq: reduced1b}, as estimate \eqref{eq: reduced2b} follows similarly (almost line by line). Note that in all the summations below we always have $|n_{i}|\sim N_{i}$, which we sometimes omit from the notation. Let $M=N_{3}^{100s_{1}}$, similarly as in Case (a), one can reduce \eqref{eq: reduced1b} to the following estimate:
\begin{equation}\label{eq: b1}
\begin{aligned}
&N_{1}^{s_{0}}\left|\sum_{\substack{
n_{1}-n_{2}+n_{3}=n, \,n_{2}\neq n_{1},n_{3}\\
n_{1}^{2}-n_{2}^{2}+n_{3}^{2}-n_{4}^{2}=O(M)}}\int d_{1}(n_{1})\overline{d_{2}(n_{2})}r_{3}(n_{3})\overline{H(n)}\,dt\right|\\
 \lesssim &N_{3}^{-\epsilon_{1}}\|P_{N_{1}}w_{1}\|_{X^{s_{0},b_{0}}}\|P_{<N_{1}}h\|_{X^{0,1-b_{0}}}\|P_{N_{2}}w_{2}\|_{X^{s_{0},b_{0}}}.
 \end{aligned}
\end{equation}

To see this, note that one automatically has $|n|\lesssim N_1$, and recall that $\sum_{n_2}\|d_{2}(n_{2})\|_{L^{\infty}}^{2}\lesssim 1$. By first applying Cauchy-Schwarz in $n$ and then in $n_{2}$, one has 
\begin{equation}\label{eq: b2}
\begin{aligned}
&N_{1}^{s_{0}}\left|\sum_{\substack{
n_{1}-n_{2}+n_{3}=n, \,n_{2}\neq n_{1},n_{3}\\
n_{1}^{2}-n_{2}^{2}+n_{3}^{2}-n_{4}^{2}=O(M)}}\int d_{1}(n_{1})\overline{d_{2}(n_{2})}r_{3}(n_{3})\overline{H(n)}\,dt\right|\\
\lesssim  &N_{1}^{s_{0}}\|P_{<N_{1}}h\|_{X^{0,1-b_{0}}}\left(\sum_{n}\Big\|
\sum_{
\substack{
n_{1}-n_{2}+n_{3}=n, \, n_{2}\neq n_{1},n_{3}\\
n_{1}^{2}-n_{2}^{2}+n_{3}^{2}-n_{4}^{2}=O(M)
}
}
d_{1}(n_{1})\overline{d_{2}(n_{2})}r_{3}(n_{3})\Big\|_{L_{t}^{2}}^{2}\right)^{1/2}\\
\lesssim &N_{1}^{s_{0}}\|P_{<N_{1}}h\|_{X^{0,1-b_{0}}}\left( \sum_{n,n_{2}}\Big\|\sum_{
\substack{
n_{1}-n_{2}+n_{3}=n, n_{2}\neq n_{1},n_{3},\\
n_{1}^{2}-n_{2}^{2}+n_{3}^{2}-n_{4}^{2}=O(M)
}}d_{1}(n_{1})r_{3}(n_{3})
\Big\|_{L_{t}^{2}}^{2} \right)^{1/2}.
\end{aligned}
\end{equation}

Recall also $n_{1}^{2}-n_{2}^{2}+n_{3}^{2}-n_{4}^{2}=O(M)$ together with $n_{1}-n_{2}+n_{3}=n$ imply that
\begin{equation}
\langle n_{3}-n_{2}, n_{3}-n\rangle=O(M).
\end{equation}By applying Cauchy-Schwarz in $n_3$ (note that the inner sum can be viewed as over $n_3$ only) and recalling $\|r_{3}(n_3)\|_{L_t^\infty}\lesssim N_3^{-1+s_1}$ (outside an exceptional set), the above can be further bounded as
\[
\begin{split}
\lesssim & N_{1}^{s_{0}}\|P_{<N_{1}}h\|_{X^{0,1-b_{0}}}\left( \sum_{n,n_2, n_3}\|d_1(n_1)\|_{L_t^2}^2\right)^{1/2}N_3^{-1+s_1}\left(\sup_{n,n_2}\#\{n_3:\, \langle n_{3}-n_{2}, n_{3}-n\rangle=O(M)\}\right)^{1/2}\\
\lesssim & \|P_{<N_{1}}h\|_{X^{0,1-b_{0}}}\|P_{N_1}w_1\|_{X^{s_0,b_0}} N_3^{Cs_1}N_3^{-2/3}\left(\sup_{n_1}\#\{n_2,n_3:\, \langle n_2-n_1,n_2-n_3\rangle=O(M),\,n_2\neq n_1,n_3\} \right)^{1/2},
\end{split}
\]where in the second step above, we have applied a variant of Lemma \ref{lem: circle} to conclude 
\[
\sup_{n,n_2}\#\{n_3:\, \langle n_{3}-n_{2}, n_{3}-n\rangle=O(M)\}\lesssim N_3^{2/3+100s_1}.
\]Indeed, since $N_1\sim N_3$, after dividing into $\lesssim N_3^{100s_1}$ parts, all the $n_3$ lie in an annulus of radius $\sim R\lesssim N_3$ with thickness $\sim O(\frac{1}{R})$. By Lemma \ref{lem: circlepart}, there are at most $\sim R^{2/3}\lesssim N_3^{2/3}$ such points.

Furthermore, we apply Lemma \ref{lem: linecounting} to count
\[
\sup_{n_1}\#\{n_2,n_3:\, \langle n_2-n_1,n_2-n_3\rangle=O(M),\,n_2\neq n_1,n_3\}\lesssim N_2^2N_3^{1+100s_1},
\]which implies that
\[
\begin{split}
&N_{1}^{s_{0}}\left|\sum_{\substack{
n_{1}-n_{2}+n_{3}=n, \,n_{2}\neq n_{1},n_{3}\\
n_{1}^{2}-n_{2}^{2}+n_{3}^{2}-n_{4}^{2}=O(M)}}\int d_{1}(n_{1})\overline{d_{2}(n_{2})}r_{3}(n_{3})\overline{H(n)}\,dt\right|\\
\lesssim &\|P_{<N_{1}}h\|_{X^{0,1-b_{0}}}\|P_{N_1}w_1\|_{X^{s_0,b_0}} N_3^{Cs_1}N_2N_3^{-1/6}.
\end{split}
\]Recall that we have reduced to the case $N_3\geq N_2^{1000}$, hence the desired estimate is obtained.

\subsection{Case (d): $N_1(I)\geq N_3(II)\geq N_2(II)$}

This case is almost identical to Case (c). By the deterministic estimates, it suffices to show for all $N_1\geq N_{1,0}$ (where $N_{1,0}=\delta^{-1}$), one has up to an exceptional set of probability $e^{-N_1^c}$ that
\begin{equation}\label{eqn: case d}
N_1^{s_0}\left| \sum_{
\begin{subarray}
\quad n_{i}\sim N_{i}, n_{1}-n_{2}+n_{3}=n,  \, n_{2}\neq n_{1},n_{3}\\
n_{1}^{2}-n_{2}^{2}+n_{3}^{2}-n^{2}=O(M)=O(N_{1}^{100s_{0}})
\end{subarray}
}
\int r_{1}(n_{1})\overline{d_{2}(n_{2})}d_{3}(n_{3})\overline{H(n)}\,dt\right|\lesssim N_{1}^{-\epsilon_{1}}, \text{ for some } \epsilon_{1}\gg \epsilon_{0}.
\end{equation}

Note that by removing an exceptional set of probability $e^{-N_{1,0}^{-cs_1}}$ if necessary, we will assume in this subsection that
\begin{equation}
|g_{n_1}(\omega)|+\|P_{N_1}\phi(t)v_1\|_{X^{0,b_0}}+\|P_{N_1}\phi(t)v_1\|_{L_{t,x}^\infty}\lesssim N_1^{s_1},\quad \forall |n_1|\sim N_1>N_{1,0}.
\end{equation}

There still holds the same bound (\ref{eq: c3}) as in Case (c). Moreover, one still has (\ref{eq: c1}), as it has nothing to do with the relative sizes of $N_2, N_3$, and when $N_1\gg N_3$, the bound (\ref{eq: c2}) still holds true as well (with the choice $M=N_1^{100s_0}$). Indeed, the only step that one needs to check here is that
\[
\sup_{n,n_2}\#\{n_3:\, \langle n_3-n_2,n_3-n\rangle=O(M)\}\lesssim N_1^{100s_0}\max\left(\frac{N_3}{N_1^{1/3}},1\right),
\]which follows from the same proof of Lemma \ref{lem: circle} and the assumption that $N_1\gg N_3$.

We claim that the desired bound follows from (\ref{eq: c3}) and (\ref{eq: c2}). To see this, when $N_1\sim N_3$, if one further has $N_2>N_3^{1/9}\sim N_1^{1/9}$, one can apply (\ref{eq: c3}). Otherwise, $N_2\leq N_3^{1/9}$, (\ref{eq: c2}) suffices. When $N_1\gg N_3$, we address two difference subcases. If we are in the subcase that $N_1^{1/3}\geq N_3$, then one automatically has $N_2\lesssim N_1^{1/3}$ hence (\ref{eq: c2}) implies the desired estimate. In the subcase that $N_1^{1/3}<N_3$, suppose in addition one has $N_2N_3>N_1^{\frac{10}{9}}$, then we apply (\ref{eq: c3}), otherwise the desired decay in $N_1$ follows from (\ref{eq: c2}). The proof of Case (d) is complete.

\subsection{Case (e): $N_1(II)\geq N_2(I)\geq N_3(I)$}
By a similar reduction process as in Case (a), let $N_{2,0}$ be a large parameter satisfying $N_{2,0}^{100}=\delta^{-1}$, we will focusing on proving for all $N_{2}\geq N_{2,0}$ that, up to an exceptional set of probability of $e^{-N_{2}^{c}}$ and a common exceptional set independent of $N_{2}$, with probability $e^{-N_{2,0}^{c}}$, we have 
\begin{itemize}
\item when $N_{1}\sim N_{2}$, 
\begin{equation}\label{eq: reducede1}
N_{1}^{s_{0}}\left|\int \phi(t)\bar{h}\NN
_{1}(P_{N_{1}}w_{1}, P_{N_{2}}{v}_{2}, P_{N_{3}}v_{3})\right|\lesssim N_{2}^{-\epsilon_{1}}\sim N_{1}^{-\epsilon_{1}},
\end{equation}
\item when $N_{1}\gg N_{2}$,
\begin{equation}\label{eq: reducede}
N_{1}^{s_{0}}\left|\int \phi(t)\bar{h}\NN_{1}(P_{N_{1}}w_{1},P_{N_{2}}{v}_{2}, P_{N_{3}}v_{3})\right|\lesssim N_{2}^{-\epsilon_{1}}\|P_{N_{1}}w_{1}\|_{X^{s_{0},b_{0}}}\|P_{N_{1}}h\|_{X^{0,1-b_{0}}}.
\end{equation}
\end{itemize}
As usual, the part $N_{2}\leq N_{2,0}$ will be handled by the purely deterministic estimate \eqref{deter2}, and by localizing in time $\sim N_{2,0}^{-100}$.

One may assume, by dropping a set of probability $e^{-N_{2,0}^{c}}$, that
\begin{equation}\label{eq: crudecasec}
\begin{cases}
|g_{n}|\leq N_{2,0}^{s_{1}}, & |n|\leq N_{2,0},\\
|g_{n}|\leq N_{2}^{s_{1}}, & |n|\sim N_{2}\geq N_{2,0}.
\end{cases}
\end{equation}

\begin{rem}
In the original paper of Bourgain \cite{bourgain1996invariant}, Case (e) is not the hardest case, however, one should be particularly careful in our irrational setting. This is because our counting lemma in the irrational case is weaker compared to the ones in \cite{bourgain1996invariant}, hence any loss of $N_{1}^{\epsilon}$ will be unfavorable. Since the random data argument can gain at most a (negative) power of $N_{2}$, our counting Lemma \ref{lem: count strong} becomes useless in Case (e). 
\end{rem}

\begin{rem}
One should also be very careful about dropping exceptional sets of small probability  when the highest frequency is of type (II).  For example, in our current Case (e),  all large deviation type arguments require one to drop a set of probability $e^{-N_{2}^{c}}$, thus  one cannot apply random data type argument for too many times.  For instance, if one drops $N_{1}^{2}$ different sets with  probability $e^{-N_{2}^{c}}$, one immediately loses control of the total probability. Moreover, in Case (e), one also needs to sum in $N_{1}$. Therefore, it is crucial that, for a fixed $N_{2}$ and for all $N_{1}$, one can apply at most $N_{2}^{C}$ times essentially different random data type arguments. This is an issue existing even in the rational tori case. We will add some more details along the proof for the convenience of the reader.
\end{rem}

From the remark above, one observes that the potentially most troublesome situation will be when $\ln N_{1}\gg \ln N_{2}$. Hence, in the following we will only focus on proving \eqref{eq: reducede}, and only briefly comment on necessary changes needed for proving \eqref{eq: reducede1}. 

Let $M=N_{2}^{100s_{1}}$, we may further reduce \eqref{eq: reducede} to the following estimate:
\begin{equation}
\begin{aligned}
& N_{1}^{s_{0}}\left|\sum_{
\begin{subarray}
\quad |n_{i}|\sim N_{i}, \,n_{1}-n_{2}+n_{3}-n=0\\
n_{1}^{2}-n_{2}^{2}+n_{3}^{2}-n^{2}=O(M)
\end{subarray} }\int d_{1}(n_{1})\overline{r_{2}(n_{2})}r_{3}(n_{3})\overline{H(n)}\,dt\right|\lesssim N_{2}^{-\epsilon_{1}}\|P_{N_{1}}w_{1}\|_{X^{s_{0},b_{0}}}\|P_{N_{1}}h\|_{X^{0,1-b_{0}}}.\\
\end{aligned}
\end{equation}

By applying Cauchy-Schwarz in $n$, we have
\begin{equation}
\begin{aligned}
&N_{1}^{s_{0}}\left|\sum_{
\begin{subarray}
\quad |n_{i}|\sim N_{i}, \,n_{1}-n_{2}+n_{3}-n=0\\
n_{1}^{2}-n_{2}^{2}+n_{3}^{2}-n^{2}=O(M)
\end{subarray} }\int d_{1}(n_{1})\overline{r_{2}(n_{2})}r_{3}(n_{3})\overline{H(n)}\,dt\right|\\
\lesssim 
  &N_{1}^{s_{0}}\|P_{N_{1}}h\|_{X^{0,1-b_{0}}}\left(\sum_{n}\Big\|
\sum_{
\begin{subarray}
\quad n_{1}-n_{2}+n_{3}=n, \,n_{2}\neq n_{1},n_{3}\\
n_{1}^{2}-n_{2}^{2}+n_{3}^{2}-n_{4}^{2}=O(M)
\end{subarray}
}
d_{1}(n_{1})\overline{r_{2}(n_{2})}r_{3}(n_{3})
\Big\|_{L_{t}^{2}}^{2}\right)^{1/2}.
\end{aligned}
\end{equation}

For the sake of brevity, in the following we oftentimes omit the condition $|n_i|\sim N_i$ in the summation. Dividing $\{n_1:\, |n_{1}|\sim N_{1}\}$ into finitely overlapping balls $\{J\}$ of radius $\sim N_{2}$, we are left with showing for each $J$ that, up to some exceptional set of small probability $e^{-N_{2}^{c}}$, 

\begin{equation}\label{eq: e1}
\sum_{n\in J}\Big\|
\sum_{
\begin{subarray}
\quad n_{1}-n_{2}+n_{3}=n, \,n_{2}\neq n_{1},n_{3}\\
n_{1}^{2}-n_{2}^{2}+n_{3}^{2}-n_{4}^{2}=O(M)
\end{subarray}
}
d_{1}(n_{1})\overline{r_{2}(n_{2})}r_{3}(n_{3})
\Big\|_{L_{t}^{2}}^{2}
\lesssim N_{2}^{-s}\|P_Jw_1\|_{X^{0,b_0}}^2,
\end{equation}where we have observed that $n_1\in J$ implies $n\in \tilde{J}$ (a doubling of $J$) and we still denote $\tilde{J}$ as $J$ for the sake of notational convenience. Moreover, we will prove the above estimate for some $s\gg s_1$. In particular, any loss of $N_{2}^{Cs_{1}}$ in the estimate will be irrelevant. 

Note that for each fixed $N_{1}$, there are $\sim N_{1}^{2}/N_{2}^{2}$ such $J$, hence one should be careful when applying random data type argument to avoid dropping too many exceptional sets. Observe, every time one applies large deviation type argument to estimate sums of Gaussians and multiple Gausssians, one needs to drop an exceptional set of probability $e^{-N_{2}^c}$, and such set, a priori may depend on $J$. If one naively drops all such sets, a priori one may need to drop in total a set of probability $\sim \frac{N_{1}^{2}}{N_{2}^{2}}e^{-N
 _{2}^{c}}$, which could be enormous when $N_1\gg N_2$. Also recall we also need to sum for all $N_{1}\geq N_{2}$.  This problem will even arise when one studies the problem on rational tori. We will  explain how to address this issue in detail  in  Subsubsection \ref{eeeee}, and other cases will follow similarly.
 
 Note that for the case $N_1\sim N_2$, the decomposition into $\{J\}$ is unnecessary.
 
To prove (\ref{eq: e1}),  we first define
\[
A(n,n_1)=A(n,n_1)(t)=\begin{cases} 
\sum_{\begin{subarray}
\quad n_{1}-n_{2}+n_{3}=n, \,n_{2}\neq n_{1},n_{3}\\
n_{1}^{2}-n_{2}^{2}+n_{3}^{2}-n_{4}^{2}=O(M)
\end{subarray}} r_{2}(n_2)r_3(n_3), & \text{if } n_1,n\in J,\\
0, & \text{otherwise},
\end{cases}
\]and
\[
\sigma(n,n_1)=\begin{cases} 
\sum_{\begin{subarray}
\quad n_{1}-n_{2}+n_{3}=n, \,n_{2}\neq n_{1},n_{3}\\
n_{1}^{2}-n_{2}^{2}+n_{3}^{2}-n_{4}^{2}=O(M)
\end{subarray}} \frac{g_{n_2}(\omega)g_{n_3}(\omega)}{N_2N_3}, & \text{if } n_1,n\in J,\\
0, & \text{otherwise}.
\end{cases}
\]Then, similarly as in Remark \ref{rem: matrixlinfty}, one has the left hand side of (\ref{eq: e1}) bounded by
\begin{equation}
\lesssim \|P_Jw_1\|_{X^{0,b_0}}^2\left[\max_{n\in J}\sum_{n_1\in J} |\sigma(n,n_1)|^2 + \left(\sum_{n\neq n'}\Big| \sum_{n_1\in J} \sigma(n,n_1)\overline{\sigma(n',n_1)}\Big|^2\right)^{1/2}\right],
\end{equation}where we have applied  (\ref{eq: m2}) and recalled that $\sum_{n_1\in J}\|d_1(n_1)\|_{L_t^2}^2\lesssim \|P_{J}w_1\|_{X^{0,b_0}}^2$. In the following, it suffices to bound the two terms in the brackets by $N_2^{-s}$.

The diagonal term is easier. Note that if $n_2\neq n_3$, for $n\in J$ fixed, one has
\begin{equation}\label{eq: e diag}
\begin{split}
&\sum_{n_1\in J} |\sigma(n,n_1)|^2\\
\lesssim &(N_2N_3)^{-2}N_2^{Cs_1}\sup_n\#\{(n_2,n_3):\, n=n_1-n_2+n_3,\, n_2\neq n_1, n_3,\, \langle n_3-n_2,n-n_3\rangle =O(M)\}\\
\lesssim &(N_2N_3)^{-2}N_2^{Cs_1}N_3^2N_2=N_2^{-1+Cs_1}.
\end{split}
\end{equation}In the first step above, we applied Lemma \ref{lem: generalwiener} to get $|\sum g_{n_2}g_{n_3}|^2\lesssim N_2^{Cs_1}\sum 1$ , by dropping an exceptional set if necessary, and in the second step, we counted $n_3$ naively and then $n_2$ using Lemma \ref{lem: linecounting}.
 
We are thus left with the non-diagonal term. Expanding $\sigma(n,n_1)$ and $\sigma(n',n_1)$, our goal is to show that
\begin{equation}\label{eq: e2}
\left(\sum_{n\neq n'}\Big| \sum_{n_1\in J} \sigma(n,n_1)\overline{\sigma(n',n_1)}\Big|^2\right)^{1/2}=(N_2N_3)^{-2}\left(\sum_{n\neq n'}\Big| \sum_{(\ast)} \overline{g_{n_2}}g_{n_3}g_{n'_2}\overline{g_{n'_3}}  \Big|^2\right)^{1/2}\lesssim N_2^{-s}
\end{equation}for some number $s\gg s_1$. Here we have simplified the notation by using $(\ast)$ to denote the set of $(n_1,n_2,n_3,n'_2,n'_3)$ satisfying
\begin{equation}
\begin{cases}
n_1\in J,\\
n=n_1-n_2+n_3,\,n_2\neq n_1,n_3,\, \langle n-n_1,n-n_3\rangle=O(M),\\
n'=n_1-n'_2+n'_3,\,n'_2\neq n_1,n'_3,\, \langle n'-n_1,n'-n'_3\rangle=O(M).
\end{cases}
\end{equation}In the following, we will prove (\ref{eq: e2}) case by case.

\subsubsection{Case I: $n_2,n_3,n'_2,n'_3$ are distinct}\label{eeeee}

Denoting the corresponding summation in $\sum_{(\ast)}$ by $\sum_{(\ast),1}$ and applying again Lemma \ref{lem: generalwiener} up to dropping a set of measure $e^{-N_{2}^{c}}$, one has
\[
\Big| \sum_{(\ast),1} \overline{g_{n_2}}g_{n_3}g_{n'_2}\overline{g_{n'_3}}  \Big|^2\lesssim N_2^{Cs_1}\sum_{(\ast),1} 1.
\]Hence, denoting the corresponding contribution of Case I in the left hand side of (\ref{eq: e2}) by $(\ref{eq: e2})_1$, one obtains

\begin{equation}\label{eq: ef1}
(\ref{eq: e2})_1\lesssim (N_2N_3)^{-2}N_2^{Cs_1}\left(\#\{n_1,n_2,n_3,n'_2,n'_3:\,(\ast\ast)\} \right)^{1/2},
\end{equation}
where $(\ast\ast)$ denotes the conditions
\begin{equation}\label{eq: star star}
\begin{cases}
n_1\in J,\\
n_2\neq n_1,n_3,\, n_2'\neq n_1,n_3',\\
\langle n_2-n_1, n_2-n_3\rangle=O(M),\, \langle n_2'-n_1, n_2'-n_3'\rangle = O(M).
\end{cases}
\end{equation}

By first counting naively $n_1\in J$, one has
\[
\#\{n_1,n_2,n_3,n'_2,n'_3:\,(\ast\ast)\}\lesssim N_2^2\left(N_3^2\max\left(\frac{N_2}{N_1^{1/3}},1\right)\right)^2,
\]where we have then counted $n_3$ naively, and applied Lemma \ref{lem: circle} (recalling that $N_1\gg N_2$). Therefore,
\[
(\ref{eq: e2})_1\lesssim N_2^{Cs_1}\max(N_1^{-1/3},N_2^{-1})\leq N_2^{-1/3+Cs_1}.
\]

Note that in the case $N_1\sim N_2$, the same estimate remains true, as the counting Lemma \ref{lem: circle} still implies the same bound. Similarly, in Case II, III, IV, V below, Lemma \ref{lem: circle} always provides the same counting result.

Before we go to the next case, we explain the issue about not dropping too many exceptional sets. This needs to be taken care of since the relation
\begin{equation}\label{errrr}
\begin{cases}
n_1\in J,\\
n=n_1-n_2+n_3,\,n_2\neq n_1,n_3,\, \langle n-n_1,n-n_3\rangle=O(M),\\
n'=n_1-n'_2+n'_3,\,n'_2\neq n_1,n'_3,\, \langle n'-n_1,n'-n'_3\rangle=O(M).
\end{cases}
\end{equation}
a priori depends on $J$. Note that we need only to worry about the case $N_{1}\gg N_{2}, N_{3}$. Without loss of generality, we may assume also $n\in J$. We write $J=a_{J}+B_{N_{2}}$,   $n=a_{J}+m, n'=a_{J}+m', n_{1}=a_{J}+m_{1}$, and  relation \eqref{errrr} as
\begin{equation}\label{err1111}
\begin{cases}
m_{1}\in B_{N_{2}},\\
m=m_1-n_2+n_3,\,n_2\neq n_1,n_3,\, \langle m-m_{1},m-n_3\rangle=-\langle m-m_{1}, a_{J} \rangle+O(M),\\
m'=m_1-n'_2+n'_3,\,n'_2\neq n_1,n'_3,\, \langle m'-m_1,m'-n'_3\rangle=-\langle m'-m_{1}, a_{J} \rangle+O(M).
\end{cases}
\end{equation}
The point is, though, there are potentially many choice of $a_{J}$, the above relation is empty unless 

$$-\langle m-m_{1}, a_{J} \rangle+O(M)=O(N_{2}^{2}),\quad 
-\langle m'-m_{1}, a_{J} \rangle+O(M)=O(N_{2}^{2}).$$ Thus, we can always write  relation \eqref{err1111} into $O(M)$ many union of the following,
\begin{equation}
\begin{cases}
m_{1}\in B_{N_{2}},\\
m=m_1-n_2+n_3,\,n_2\neq n_1,n_3,\, \langle m-m_{1},m-n_3\rangle=a+O(1),\\
m'=m_1-n'_2+n'_3,\,n'_2\neq n_1,n'_3,\, \langle m'-m_1,m'-n'_3\rangle=b+O(1),
\end{cases}
\end{equation}
where $a,b\in \ZZZ$ and $|a|, |b|\lesssim N_{2}^{2}$. Thus, the total exceptional set one needs to drop, for all $N_{1}$ and $J$, will be at most $N_{2}^{2}e^{-N_{2}^{c}}$, which is allowed.

We don't repeat this discussion of the exceptional set in the later part of the article.
\subsubsection{Case II: $n_2=n'_2$ ($n_3\neq n'_3$)}

Denote the corresponding summation in $\sum_{(\ast)}$ as $\sum_{(\ast), 2}$, one has from  $n_2=n_2'$, (\ref{eq: crudecasec}) and Lemma \ref{lem: generalwiener} that, {\color{black}{up to  dropping an exceptional set}}
\begin{equation}\label{eq: ef2}
\Big| \sum_{(\ast),2 }\overline{g_{n_2}}g_{n_3}g_{n'_2}\overline{g_{n'_3}}  \Big|^2\lesssim N_2^{Cs_1}\sum_{n_3,n_3'} (\#S(n,n',n_3,n_3'))^2,
\end{equation}where by Lemma \ref{lem: linecounting}
\[
\#S(n,n',n_3,n_3'):=\#\{n_1,n_2:\, (n_1,n_2,n_3,n_2,n_3') \text{ satisfies } (\ast)\}\lesssim N_2^{1+Cs_1}.
\]Hence, remember the definition of $(\ast\ast)$ in \eqref{eq: star star}, one further has
\begin{equation}
(\ref{eq: e2})_2\lesssim (N_2N_3)^{-2}N_2^{Cs_1}N_2^{1/2}\left(\#\{n_1,n_2,n_3, n_3':\, (n_1,n_2,n_3,n_2,n_3') \text{ satisfies } (\ast\ast)\} \right)^{1/2}.
\end{equation}By counting $n_1\in J$ naively first, then counting $(n_2,n_3)$ using Lemma \ref{lem: circle}, lastly counting $n_3'$ via Lemma \ref{lem: linecounting}, one obtains
\[
\#\{n_1,n_2,n_3, n_3':\, (n_1,n_2,n_3,n_2,n_3') \text{ satisfies } (\ast\ast)\}\lesssim N_2^2 N_3^2 \max\left(\frac{N_2}{N_1^{1/3}},1\right) N_3 N_2^{Cs_1},
\]which implies that
\begin{equation}
(\ref{eq: e2})_2\lesssim N_2^{Cs_1}N_3^{-1/2}\max(N_1^{-1/6}, N_2^{-1/2})\leq N_2^{-1/6+Cs_1}.
\end{equation}

\subsubsection{Case III: $n_3=n'_3$ ($n_2\neq n'_2$)}

Denoting the corresponding summation in $\sum_{(\ast)}$ as $\sum_{(\ast), 3}$, similarly as in Case II, one has
\begin{equation}\label{eq: ef3}
\Big| \sum_{(\ast),3 }\overline{g_{n_2}}g_{n_3}g_{n'_2}\overline{g_{n'_3}}  \Big|^2\lesssim N_2^{Cs_1}\sum_{n_2,n_2'} (\#S(n,n',n_2,n_2'))^2,
\end{equation}where by trivially counting $n_3$,
\[
\#S(n,n',n_2,n_2'):=\#\{n_1,n_3:\, (n_1,n_2,n_3,n_2',n_3) \text{ satisfies } (\ast)\}\lesssim N_3^2.
\]
Hence, remember the definition of $(\ast\ast)$ in \eqref{eq: star star}
\begin{equation}
(\ref{eq: e2})_3\lesssim (N_2N_3)^{-2}N_2^{Cs_1}N_3\left(\#\{n_1,n_2,n_2', n_3:\, (n_1,n_2,n_3,n_2',n_3) \text{ satisfies } (\ast\ast)\} \right)^{1/2}.
\end{equation}By trivially counting $n_1\in J$, $n_3$, and applying Lemma \ref{lem: circle} to $n_2$ and $n_2'$, one obtains
\[
\#\{n_1,n_2,n_2', n_3:\, (n_1,n_2,n_3,n_2',n_3) \text{ satisfies } (\ast\ast)\}\lesssim N_2^{Cs_1}N_2^2N_3^2\max\left(\frac{N_2^2}{N_1^{2/3}},1\right).
\]Therefore,
\begin{equation}
(\ref{eq: e2})_3\lesssim N_2^{Cs_1}\max\left(N_1^{-1/3}, N_2^{-1}\right)\leq N_2^{-1/3+Cs_1}.
\end{equation}

\subsubsection{Case IV: $n_2=n'_3$, $n_3\neq n'_2$}

Denoting the corresponding summation in $\sum_{(\ast)}$ as $\sum_{(\ast), 4}$, similarly as above, one has
\begin{equation}\label{eq: e 8}
\Big| \sum_{(\ast),4 }\overline{g_{n_2}}g_{n_3}g_{n'_2}\overline{g_{n'_3}}  \Big|^2\lesssim N_2^{Cs_1}\sum_{n_3,n_2'} (\#S(n,n',n_3,n_2'))^2,
\end{equation}where by Lemma \ref{lem: linecounting},
\[
\#S(n,n',n_3,n_2'):=\#\{n_1,n_2:\, (n_1,n_2,n_3,n_2',n_2) \text{ satisfies } (\ast)\}\lesssim N_2^{1+Cs_1}.
\]Plugging into the above, one obtains, again remember the definition of $(\ast\ast)$ in \eqref{eq: star star}
\begin{equation}
(\ref{eq: e2})_4\lesssim (N_2N_3)^{-2}N_2^{Cs_1}N_2^{1/2}\left(\#\{n_1,n_2, n_3, n_2':\, (n_1,n_2,n_3,n_2',n_2) \text{ satisfies } (\ast\ast)\} \right)^{1/2}.
\end{equation}Same as in Case III, by trivially counting $n_1\in J$, $n_3$, and applying Lemma \ref{lem: circle} to $n_2$ and $n_2'$, one obtains
\[
\#\{n_1,n_2, n_3, n_2':\, (n_1,n_2,n_3,n_2',n_2) \text{ satisfies } (\ast\ast)\}\lesssim N_2^{Cs_1}N_2^2N_3^2\max\left(\frac{N_2^2}{N_1^{2/3}},1\right).
\]Therefore, observing that in this case one must have $N_2\sim N_3$,
\begin{equation}
(\ref{eq: e2})_4\lesssim  N_2^{Cs_1}N_3^{-1}N_2^{-1/2}\max\left(\frac{N_2}{N_1^{1/3}},1\right)\lesssim N_2^{-1/2+Cs_1}\max(N_1^{-1/3}, N_2^{-1})\leq N_2^{{-5/6+Cs_1}}.
\end{equation}

\subsubsection{Case V: $n_3=n'_2$, $n_2\neq n'_3$}

This case can be treated in the exact same way as Case IV.

\subsubsection{Case VI: $n_3=n'_2$, $n_2= n'_3$}

In this case we again have $N_2\sim N_3$. Denoting as $\sum_{(\ast),6}$ the corresponding sum in $\sum_{(\ast)}$, one has
\begin{equation}
\Big| \sum_{(\ast),6 }\overline{g_{n_2}}g_{n_3}g_{n'_2}\overline{g_{n'_3}}  \Big|^2\lesssim N_2^{Cs_1}\left(\#S(n,n')\right)^2,
\end{equation}where
\[
S(n,n'):=\{n_1,n_2, n_3:\, (n_1,n_2,n_3,n_3,n_2) \text{ satisfies } (\ast)\},
\]and in this case means that $n_1,n_2,n_3$ are distinct, $n=n_1-n_2+n_3$, $n'=n_1-n_3+n_2$, and
\[
\langle n-n_1, n-n_3\rangle=O(M),\quad \langle n'-n_1, n'-n_2\rangle=O(M).
\]The above implies that $n+n'=2n_1$, hence $\#S(n,n')\lesssim N_2^{1+Cs_1}$ by Lemma \ref{lem: linecounting}.
 
As a result, again remember the definition of $(\ast\ast)$ in \eqref{eq: star star}
\begin{equation}
(\ref{eq: e2})_6\lesssim (N_2N_3)^{-2}N_2^{Cs_1}N_2^{1/2}\left(\#\{n_1,n_2, n_3:\, (n_1,n_2,n_3,n_3,n_2) \text{ satisfies } (\ast\ast)\} \right)^{1/2}.
\end{equation}From $(\ast \ast)$, one has
\[
\langle n_2-n_1,n_2-n_3\rangle=O(M),\quad \langle n_3-n_1, n_3-n_2\rangle=O(M),
\]hence $|n_2-n_3|^2=O(M)$. Trivially counting $n_2,n_3$, and applying Lemma \ref{lem: linecounting} to count $n_1\in J$, one has 
\[
\#\{n_1,n_2, n_3:\, (n_1,n_2,n_3,n_3,n_2) \text{ satisfies } (\ast\ast)\}\lesssim N_2^{Cs_1}N_2^2N_3^2N_2,
\]thus
\begin{equation}
(\ref{eq: e2})_6\lesssim  N_2^{Cs_1}N_3^{-1}\lesssim N_2^{-1+Cs_1}.
\end{equation}This concludes the proof of Case (e).

\subsection{Case (f): $N_1(II)\geq N_3(I)\geq N_2(I)$}
 
By the same reduction as in Case (e), let $N_{3,0}^{100}=\delta^{-1}$ and $M=N_{3}^{100s_1}$. It suffices to consider the high frequency part $N_3\geq N_{3,0}$. Our goal is to show that, up to an exceptional set of probability $\sim e^{-N_{3}^c}$ and a common exceptional set (independent of $N_3$) of probability $e^{-N_{3,0}^c}$, there hold
\begin{itemize}
\item when $N_{1}\sim N_{3}$, 
\begin{equation}\label{eq: reducedf1}
N_{1}^{s_{0}}\left|\int \phi(t)\bar{h}\NN
_{1}(P_{N_{1}}w_{1}, P_{N_{2}}{v}_{2}, P_{N_{3}}v_{3})\right|\lesssim N_{3}^{-\epsilon_{1}}\sim N_{1}^{-\epsilon_{1}},
\end{equation}
\item when $N_{1}\gg N_{3}$,
\begin{equation}\label{eq: reducedf}
N_{1}^{s_{0}}\left|\int \phi(t)\bar{h}\NN_{1}(P_{N_{1}}w_{1},P_{N_{2}}{v}_{2}, P_{N_{3}}v_{3})\right|\lesssim N_{3}^{-\epsilon_{1}}\|P_{N_{1}}w_{1}\|_{X^{s_{0},b_{0}}}\|P_{N_{1}}h\|_{X^{0,1-b_{0}}}.
\end{equation}
\end{itemize}We will also assume, by dropping a set of probability $e^{-N_{3,0}^c}$ that
\begin{equation}\label{eq: f0}
\begin{cases}
|g_n|\leq N_{3,0}^{s_1}, & |n|\leq N_{3,0},\\
|g_n|\leq N_3^{s_1}, &|n|\sim N_3\geq N_{3,0}.
\end{cases}
\end{equation}Again, we will focus on proving (\ref{eq: reducedf}). By a similar reduction as for Case (c), it suffices to show
\begin{equation}
\begin{aligned}
& N_{1}^{s_{0}}\left|\sum_{
\begin{subarray}
\quad |n_{i}|\sim N_{i}, \,n_{1}-n_{2}+n_{3}-n=0\\
n_{1}^{2}-n_{2}^{2}+n_{3}^{2}-n^{2}=O(M)
\end{subarray} }\int d_{1}(n_{1})\overline{r_{2}(n_{2})}r_{3}(n_{3})\overline{H(n)}\,dt\right|\lesssim N_{3}^{-\epsilon_{1}}\|P_{N_{1}}w_{1}\|_{X^{s_{0},b_{0}}}\|P_{N_{1}}h\|_{X^{0,1-b_{0}}},\\
\end{aligned}
\end{equation}which, by the same argument as in Case (e), will follow from showing for each $J$ (of size $\sim N_3$) that, up to some exceptional set of small probability $e^{-N_{3}^{c}}$, 

\begin{equation}\label{eq: f1}
\sum_{n\in J}\Big\|
\sum_{
\begin{subarray}
\quad n_{1}-n_{2}+n_{3}=n, \,n_{2}\neq n_{1},n_{3}\\
n_{1}^{2}-n_{2}^{2}+n_{3}^{2}-n_{4}^{2}=O(M)
\end{subarray}
}
d_{1}(n_{1})\overline{r_{2}(n_{2})}r_{3}(n_{3})
\Big\|_{L_{t}^{2}}^{2}
\lesssim N_{3}^{-s}\|P_Jw_1\|_{X^{0,b_0}}^2
\end{equation}for some constance $s\gg s_1$. We will derive two different bounds, each of which works better in different regimes of $N_1,N_2, N_3$. 

First, by applying H\"older's inequality to the inner sum, one obtains that the left hand side of (\ref{eq: f1}) is bounded by
\begin{equation}
\begin{split}
\lesssim & \sum_{n\in J}\sup_{n}\#\{n_2,n_3:\, n=n_1-n_2+n_3,\,  \langle n_3-n_2,n-n_3\rangle=O(M)\} \sum_{n_1,n_2,n_3} \|d_1(n_1)\overline{r_2(n_2)}r_3(n_3)\|_{L_t^2}^2\\
\lesssim & N_3^{Cs_1}N_2^2\max\left(\frac{N_3}{N_1^{1/3}},1\right) \sum_{n,n_1,n_2,n_3} N_2^{-2}N_3^{-2}\|d_1(n_1)\|_{L_t^2}^2\\
\lesssim & N_3^{Cs_1}\max\left(\frac{N_3}{N_1^{1/3}},1\right)N_3^{-2} \|P_Jw_1\|_{X^{0,b_0}}^2\sup_{n_1}\#\{n_2,n_3:\, n_2\neq n_1,n_3,\, \langle n_2-n_1,n_2-n_3\rangle=O(M)\}.
\end{split}
\end{equation}In the second line above, we applied (\ref{eq: f0}) and Lemma \ref{lem: circle} (the estimate holds true in both cases $N_1\gg N_3$ and $N_1\sim N_3$). One can then trivially count $n_2$ and apply Lemma \ref{lem: linecounting} to count $n_3$ to further bound the above by
\begin{equation}\label{eq: f2}
\lesssim N_3^{Cs_1}\max\left(\frac{N_3}{N_1^{1/3}},1\right)N_3^{-2} \|P_Jw_1\|_{X^{0,b_0}}^2 N_2^2N_3\leq N_3^{Cs_1}N_2^2\max(N_1^{-1/3}, N_3^{-1}) \|P_Jw_1\|_{X^{0,b_0}}^2.
\end{equation}

We now turn to a different estimate of the left hand side of (\ref{eq: f1}). In fact, we claim that by the same argument as in Case (e), one also obtains in our current case that
\begin{equation}\label{eq: f3}
\sum_{n\in J}\Big\|
\sum_{
\begin{subarray}
\quad n_{1}-n_{2}+n_{3}=n, \,n_{2}\neq n_{1},n_{3}\\
n_{1}^{2}-n_{2}^{2}+n_{3}^{2}-n_{4}^{2}=O(M)
\end{subarray}
}
d_{1}(n_{1})\overline{r_{2}(n_{2})}r_{3}(n_{3})
\Big\|_{L_{t}^{2}}^{2}
\lesssim N_3^{Cs_1} N_2^{-1/6} \|P_Jw_1\|_{X^{0,b_0}}^2.
\end{equation}

We first explain how to complete the proof of (\ref{eq: f1}) using the two estimates (\ref{eq: f2}) and (\ref{eq: f3}) above. In the case $N_2>N_1^{\frac{1}{6}-\frac{1}{100}}$, the desired estimate follows from (\ref{eq: f3}). Now suppose $N_2\leq N_1^{\frac{1}{6}-\frac{1}{100}}$. If $N_3\leq N_1^{1/3}$, the bound in (\ref{eq: f2}) becomes $N_3^{Cs_1}N_2^{2}N_3^{-1}$. Hence, in the subcase when $N_2\leq N_3^{\frac{1}{2}-\frac{1}{100}}$, this is good enough. If $N_2> N_3^{\frac{1}{2}-\frac{1}{100}}$, one can again apply (\ref{eq: f3}) to obtain the desired bound. It is left to check the case $N_3>N_1^{1/3}$, where (\ref{eq: f2}) becomes $N_3^{Cs_1}N_2^2N_1^{-1/3}$. Observe that this does imply the desired result, since we are already in the regime $N_2\leq N_1^{\frac{1}{6}-\frac{1}{100}}$.

It thus suffices to verify (\ref{eq: f3}). Note that most of the estimates in Case (e) still hold, as they do not depend on  the relative sizes of $N_2$, $N_3$. More precisely, the estimate (\ref{eq: e diag}) for the diagonal term, and the final bounds obtained in Case VI, V, VI for the non-diagonal term still hold true (with an extra factor $N_3^{Cs_1}$). We are now left to examine Case I, II, III.

\subsubsection{Case I: $n_2,n_3,n'_2,n'_3$ are distinct}

We start with estimate (\ref{eq: ef1}), which still holds true in Case (f). Note that
\[
\#\{n_1,n_2,n_3,n_2',n_3':\, (\ast\ast)\}=\#\{n,n_2,n_3, n_2',n_3':\,(\ast\ast\ast)\},
\]where $(\ast\ast)$ is  in Case (e), given in (\ref{eq: star star}), and $(\ast \ast\ast)$ denotes the conditions
\begin{equation}\label{ast-ast-ast}
\begin{cases}
n\in J,\\
n_3\neq n,n_2,\, n_3'\neq n, n_2',\\
\langle n_3-n_2, n-n_3=O(M),\quad \langle n_3'-n_2', n-n_3'\rangle =O(M).
\end{cases}
\end{equation}We first count $n\in J$ naively (recalling that $|J|\sim N_3$), we then count $n_2$ naively and $n_3$ using Lemma \ref{lem: circle}, and repeat for $(n_2',n_3')$. This leads to
\[
\#\{n,n_2,n_3, n_2',n_3':\,(\ast\ast\ast)\}\lesssim N_3^{Cs_1}N_3^2 \left(N_2^2\max\left(\frac{N_3}{N_1^{1/3}},1\right)\right)^2.
\]Note that this bound holds true in both cases $N_1\gg N_3$ and $N_1\sim N_3$.
Hence, one obtains
\begin{equation}
\begin{split}
(\ref{eq: e2})_1\lesssim &(N_2N_3)^{-2}N_3^{Cs_1}\left(\#\{n_1,n_2,n_3,n'_2,n'_3:\,(\ast\ast)\} \right)^{1/2}\\
\lesssim &(N_2N_3)^{-2}N_3^{Cs_1} N_2^2 N_3\max\left(\frac{N_3}{N_1^{1/3}},1\right)\\
\lesssim & N_3^{Cs_1}\max(N_1^{-1/3}, N_3^{-1})\leq N_3^{-1/3+Cs_1}\lesssim N_3^{Cs_1}N_2^{-1/3}.
\end{split}
\end{equation}

\subsubsection{Case II: $n_2=n'_2$ ($n_3\neq n'_3$)}

In our current case, after dropping an exceptional set, one still has estimate (\ref{eq: ef1}), where
\[
\begin{split}
\#S(n,n',n_3,n_3'):=&\#\{n_1,n_2:\, (n_1,n_2,n_3,n_2,n_3') \text{ satisfies } (\ast)\}\\
\lesssim& \#\{n_2:\, \langle n_3-n_2, n-n_3\rangle=O(M)\}\lesssim N_3^{Cs_1}N_2.
\end{split}
\]Hence, remembering the definition of $(\ast\ast)$ in \eqref{eq: star star}, one has
\begin{equation}
(\ref{eq: e2})_2\lesssim (N_2N_3)^{-2}N_3^{Cs_1}N_2^{1/2}\left(\#\{n_1,n_2,n_3, n_3':\, (n_1,n_2,n_3,n_2,n_3') \text{ satisfies } (\ast\ast)\} \right)^{1/2}.
\end{equation}By counting $n_2$ first naively, then $n_3$ naively, then $n_1$ by Lemma \ref{lem: linecounting}, and lastly $n_3'$ by Lemma \ref{lem: linecounting} as well, one obtains
\[
\#\{n_1,n_2,n_3, n_3':\, (n_1,n_2,n_3,n_2,n_3') \text{ satisfies } (\ast\ast)\}\lesssim N_3^{Cs_1}N_2^2N_3^2N_3N_3,
\]which implies
\begin{equation}
(\ref{eq: e2})_2\lesssim N_3^{Cs_1}N_2^{-1/2}.
\end{equation}

\subsubsection{Case III: $n_3=n'_3$ ($n_2\neq n'_2$)}

Note that after dropping an exceptional set, estimate (\ref{eq: ef3}) still holds true in Case (f). But this time, we count $\#S(n,n',n_2,n_2')$ more carefully. It suffices to count $n_3$ satisfying $\langle n_3-n_2, n-n_3\rangle =O(M)$. By Lemma \ref{lem: circle}, one has
\[
\#S(n,n',n_2,n_2')\lesssim \begin{cases} N_3^{Cs_1}N_3^{2/3}, & \text{if } N_1\sim N_3,\\ N_3^{Cs_1}\max\left(\frac{N_3}{N_1^{1/3}},1\right), & \text{if }N_1\gg N_3.  \end{cases}
\]In the second estimate above, we used the fact that $|n|\sim N_1$ when $N_1\gg N_3$. Same as before, where remembering the definition of $(\ast\ast)$ in \eqref{eq: star star},
\begin{equation}
(\ref{eq: e2})_3\lesssim (N_2N_3)^{-2}N_3^{Cs_1}\left(\#S(n,n',n_2,n_2') \right)^{1/2}\left(\#\{n_1,n_2,n_2', n_3:\, (n_1,n_2,n_3,n_2',n_3) \text{ satisfies } (\ast\ast)\} \right)^{1/2}.
\end{equation}

By counting $n_1$, $n_2$, $n_2'$ trivially, and then $n_3$ using Lemma \ref{lem: linecounting}, one has
\[
\#\{n_1,n_2,n_2', n_3:\, (n_1,n_2,n_3,n_2',n_3) \text{ satisfies } (\ast\ast)\}\lesssim N_3^{Cs_1}N_3^2N_2^4 N_3.
\]Combining the above bounds together, one obtains
\begin{equation}
\begin{split}
(\ref{eq: e2})_3\lesssim &(N_2N_3)^{-2}N_3^{Cs_1} \left(\#S(n,n',n_2,n_2') \right)^{1/2} N_3^{3/2}N_2^2\\
\lesssim & N_3^{Cs_1} N_3^{-1/2}\cdot  \begin{cases} N_3^{1/3}, & \text{if } N_1\sim N_3,\\ \max\left(\frac{N_3^{1/2}}{N_1^{1/6}},1\right), & \text{if }N_1\gg N_3  \end{cases}\\
\lesssim & N_3^{Cs_1}N_3^{-1/6}\leq N_3^{Cs_1}N_2^{-1/6}.
\end{split}
\end{equation}The proof of Case (f) is thus complete.

\subsection{Case (g): $N_1(I)\geq N_2(I)\geq N_3(II)$}

By deterministic estimates, it suffices to show for all $N_1\geq N_{1,0}$ (where $N_{1,0}=\delta^{-1}$), one has up to an exceptional set of probability $e^{-N_1^c}$ that
\begin{equation}\label{eqn: case g}
N_1^{s_0}\left| \sum_{
\begin{subarray}
\quad n_{i}\sim N_{i}, n_{1}-n_{2}+n_{3}=n,  \, n_{2}\neq n_{1},n_{3}\\
n_{1}^{2}-n_{2}^{2}+n_{3}^{2}-n^{2}=O(M)=O(N_{1}^{100s_{0}})
\end{subarray}
}
\int r_{1}(n_{1})\overline{r_{2}(n_{2})}d_{3}(n_{3})\overline{H(n)}\,dt\right|\lesssim N_{1}^{-\epsilon_{1}}, \text{ for some } \epsilon_{1}\gg \epsilon_{0},
\end{equation}where $M=O(N_1^{100s_0})$.

By dropping an exceptional set of probability $e^{-N_{1,0}^c}$, one may assume that
\begin{equation}
|g_{n(\omega)}|\lesssim N_1^{s_1},\quad \forall |n|\sim N_1\geq N_{1,0}.
\end{equation}Our goal is to show that, up to an exceptional set of probability $e^{-N_1^c}$, there holds
\begin{equation}\label{eq: reducedg}
N_{1}^{s_{0}}\left|\int \phi(t)\bar{h}\NN_{1}(P_{N_{1}}v_{1},P_{N_{2}}{v}_{2}, P_{N_{3}}w_{3})\right|\lesssim N_{1}^{-\epsilon_{1}}.
\end{equation}

Apparently, (\ref{eq: reducedg}) can be reduced to the following estimate:
\begin{equation}
\begin{aligned}
& N_{1}^{s_{0}}\left|\sum_{
\begin{subarray}
\quad |n_{i}|\sim N_{i}, \,n_{1}-n_{2}+n_{3}-n=0\\
n_{1}^{2}-n_{2}^{2}+n_{3}^{2}-n^{2}=O(M)
\end{subarray} }\int r_{1}(n_{1})\overline{r_{2}(n_{2})}d_{3}(n_{3})\overline{H(n)}\,dt\right|\lesssim N_{1}^{-\epsilon_{1}}.\\
\end{aligned}
\end{equation}

We first derive an estimate that will handle the regime $N_3\leq N_1^{1-\frac{1}{100}}$. To see this, applying Cauchy-Schwarz in $n$, one obtains
\begin{equation}\label{eq: g3}
\begin{split}
&N_{1}^{s_{0}}\left|\sum_{
\begin{subarray}
\quad |n_{i}|\sim N_{i}, \,n_{1}-n_{2}+n_{3}-n=0\\
n_{1}^{2}-n_{2}^{2}+n_{3}^{2}-n^{2}=O(M)
\end{subarray} }\int r_{1}(n_{1})\overline{r_{2}(n_{2})}d_{3}(n_{3})\overline{H(n)}\,dt\right|\\
\lesssim &N_1^{s_0}\left(\sum_n \|H(n)\|_{L_t^2}^2 \right)^{1/2} \left( \sum_n\Big\| \sum_{\begin{subarray}
\quad n_{1}-n_{2}+n_{3}=n, \,n_{2}\neq n_{1},n_{3}\\
n_{1}^{2}-n_{2}^{2}+n_{3}^{2}-n_{4}^{2}=O(M)
\end{subarray}} r_1(n_1)\overline{r_2(n_2)}d_3(n_3)  \Big\|_{L_t^2}^2 \right)^{1/2}\\
\lesssim &N_1^{s_0}\left( \sum_n\Big\| \sum_{\begin{subarray}
\quad n_{1}-n_{2}+n_{3}=n, \,n_{2}\neq n_{1},n_{3}\\
n_{1}^{2}-n_{2}^{2}+n_{3}^{2}-n_{4}^{2}=O(M)
\end{subarray}} r_1(n_1)\overline{r_2(n_2)}d_3(n_3)  \Big\|_{L_t^2}^2 \right)^{1/2}.
\end{split}
\end{equation}

Then, applying Cauchy-Schwarz again to the inner sum above, one has, after dropping an exceptional set of probability $e^{-N_1^c}$,
\begin{equation}\label{eq: g4}
\begin{split}
\lesssim & N_1^{s_0}N_1^{-1}N_2^{-1}\left( \sum_n \Big|\sum_{\begin{subarray}
\quad n_{1}-n_{2}+n_{3}=n, \,n_{2}\neq n_{1},n_{3}\\
n_{1}^{2}-n_{2}^{2}+n_{3}^{2}-n_{4}^{2}=O(M)
\end{subarray}} \|d_3(n_3)\|_{L_t^2}   \Big|^2 \right)^{1/2}\\
\lesssim & N_1^{s_0}N_1^{-1}N_2^{-1}\left(\sum_n\sum_{n_1,n_2,n_3} \|d_3(n_3)\|_{L_t^2}^2  \right)^{1/2}\left(\sup_n\#\{n_2,n_3:\, \langle n_3-n_2, n-n_3\rangle =O(M)\}\right)^{1/2}\\
\lesssim & N_1^{Cs_0}N_1^{-1}N_2^{-1} N_2^{1/2}N_3^{1/2} \left(\sup_{n_3}\#\{n_1,n_2:\, n_2\neq n_1,n_3,\, \langle n_3-n_1, n_3-n_2\rangle=O(M)\}\right)^{1/2},
\end{split}
\end{equation}where in the last step above, we have applied Lemma \ref{lem: count strong}. Another application of Lemma \ref{lem: count strong} implies that
\begin{equation}
\lesssim N_1^{Cs_0}N_1^{-1}N_2^{-1} N_2^{1/2}N_3^{1/2}N_1^{1/2}N_2^{1/2}\lesssim N_1^{Cs_0}N_1^{-1/2}N_3^{1/2}.
\end{equation}Hence, the desired estimate follows if $N_3\leq N_1^{1-\frac{1}{100}}$.

The other case $N_3>N_1^{1-\frac{1}{100}}$ in fact follows directly from the estimates in Case (e). Note that the relative sizes of $N_1,N_2,N_3$ in these two cases are the same, so all the counting in Case (e) remain true here. We briefly sketch the argument here. It suffices to show, up to an exceptional set of small probability $e^{-N_1^c}$, that
\begin{equation}
\sum_{n}\Big\|
\sum_{
\begin{subarray}
\quad n_{1}-n_{2}+n_{3}=n, \,n_{2}\neq n_{1},n_{3}\\
n_{1}^{2}-n_{2}^{2}+n_{3}^{2}-n_{4}^{2}=O(M)
\end{subarray}
}
r_{1}(n_{1})\overline{r_{2}(n_{2})}d_{3}(n_{3})
\Big\|_{L_{t}^{2}}^{2}
\lesssim N_{1}^{-\epsilon_1}.
\end{equation}

We would like to apply again version of the Cauchy-Schwarz inequality in \eqref{eq: m2}, but this time with
\[
\sigma(n,n_3)= 
\sum_{\begin{subarray}
\quad n_{1}-n_{2}+n_{3}=n, \,n_{2}\neq n_{1},n_{3}\\
n_{1}^{2}-n_{2}^{2}+n_{3}^{2}-n_{4}^{2}=O(M)
\end{subarray}} \frac{g_{n_1}(\omega)g_{n_2}(\omega)}{N_1N_2}. \]For the diagonal term, the exact same counting as in (\ref{eq: e diag}) gives, for any fixed $n$,
\begin{equation}
\sum_{n_3} |\sigma(n,n_3)|^2\lesssim (N_1N_2)^{-2}N_1^{Cs_0}N_3^2N_2\lesssim N_1^{Cs_0}N_2^{-1},
\end{equation}which is good enough since $N_2\geq N_3> N_1^{1-\frac{1}{100}}$.

The six cases for the non-diagonal term work similarly. In particular, Case I (all $n_1, n_1', n_2, n_2'$ distinct), Case III ($n_1=n_1'$, $n_2\neq n_2'$), and Case VI ($n_1=n_2'$, $n_2=n_1'$) can be carried out in the exact same way. 

In Case II ($n_2=n_2'$, $n_1\neq n_1'$), one has up to an exceptional set that
\begin{equation}
\Big| \sum_{(\ast),2 }\overline{g_{n_1}}g_{n_2}g_{n'_1}\overline{g_{n'_2}}  \Big|^2\lesssim N_1^{Cs_0}\sum_{n_1,n_1'} (\#S(n,n',n_1,n_1'))^2
\end{equation}Here, by Lemma \ref{lem: linecounting},
\[
\#S(n,n',n_1,n_1'):=\#\{n_2,n_3:\, n_1,n_2,n_3, n_1', n_2 \text{ satisfies } (\ast')\}\lesssim N_1^{Cs_0}N_3,
\]with $(\ast')$ denotes conditions
\begin{equation}
\begin{cases}
n_1\in J,\\
n=n_1-n_2+n_3,\,n_2\neq n_1,n_3,\, \langle n-n_1,n-n_3\rangle=O(M),\\
n'=n_1'-n'_2+n_3,\,n'_2\neq n'_1,n_3,\, \langle n'-n'_1,n'-n_3\rangle=O(M).
\end{cases}
\end{equation}The rest of the argument proceeds in the exact same way as Case II in Case (e).

Since Case IV and V can be dealt with in the same way, we only briefly discuss Case IV ($n_2=n_1'$, $n_1\neq n_2'$) here. By dropping an exceptional set, one has the following analogue of (\ref{eq: e 8}):
\begin{equation}
\Big| \sum_{(\ast),4 }\overline{g_{n_1}}g_{n_2}g_{n'_1}\overline{g_{n'_2}}  \Big|^2\lesssim N_1^{Cs_0}\sum_{n_1,n_2'} (\#S(n,n',n_1,n_2'))^2,
\end{equation}where by Lemma \ref{lem: linecounting},
\[
\#S(n,n',n_1,n_2'):=\#\{n_2, n_3:\, (n_1, n_2, n_3, n_2, n_2') \text{ satisfies } (\ast')\}\lesssim N_1^{Cs_0}N_3.
\]The rest of the argument again proceeds in the same way as in Case IV of Case (e), which is left to the reader. The proof of Case (g) is complete.

\subsection{Case (h): $N_1(I)\geq N_3(II)\geq N_2(I)$}

This case can be estimated in the same way as Case (g), where again if $N_3>N_1^{1-\frac{1}{100}}$, the bounds in Case (f) apply.  Note that, compared to Case (f),  one can think about $N_{1}(I)$,  in Case (h),  as $N_{1}(II)$, except one suffers a loss $N_{1}^{2s_{0}}$. The computation in Case (f) gives a gain of $N_{3}^{-s^{*}}$, where $s^{*}$ can be computed explicitly. Thus, when $s_{0}$ is small enough and $N_{3}\geq N_{1}^{\frac{1}{100}}$, the extra loss of  $N_{1}^{2s_{0}}$ can be neglected. We omit the details.

\subsection{Case (i), (j): $N_1(I)\geq N_2(II)\geq N_3(I)$ or $N_1(I)\geq N_3(I)\geq N_2(II)$}

As before, we focus on showing for all $N_1\geq N_{1,0}$ (where $N_{1,0}=\delta^{-1}$), up to an exceptional set of probability $e^{-N_1^c}$, there holds
\begin{equation}\label{eqn: case ij}
N_1^{s_0}\left| \sum_{
\begin{subarray}
\quad n_{i}\sim N_{i}, n_{1}-n_{2}+n_{3}=n,  \, n_{2}\neq n_{1},n_{3}\\
n_{1}^{2}-n_{2}^{2}+n_{3}^{2}-n^{2}=O(M)=O(N_{1}^{100s_{0}})
\end{subarray}
}
\int r_{1}(n_{1})\overline{d_{2}(n_{2})}r_{3}(n_{3})\overline{H(n)}\,dt\right|\lesssim N_{1}^{-\epsilon_{1}}, \text{ for some } \epsilon_{1}\gg \epsilon_{0},
\end{equation}where $M=O(N_1^{100s_0})$.

We may assume  dropping an exceptional set of probability $e^{-N_{1,0}^c}$ that
\begin{equation}
|g_{n(\omega)}|\lesssim N_1^{s_1},\quad \forall |n|\sim N_1\geq N_{1,0}.
\end{equation}We aim to show that, up to an exceptional set of probability $e^{-N_1^c}$, there holds
\begin{equation}\label{eq: reducedij}
N_{1}^{s_{0}}\left|\int \phi(t)\bar{h}\NN_{1}(P_{N_{1}}v_{1},P_{N_{2}}{w}_{2}, P_{N_{3}}v_{3})\right|\lesssim N_{1}^{-\epsilon_{1}}.
\end{equation}

Similarly as before, (\ref{eq: reducedij}) will follow from the following estimate:
\begin{equation}
\begin{aligned}
& N_{1}^{s_{0}}\left|\sum_{
\begin{subarray}
\quad |n_{i}|\sim N_{i}, \,n_{1}-n_{2}+n_{3}-n=0\\
n_{1}^{2}-n_{2}^{2}+n_{3}^{2}-n^{2}=O(M)
\end{subarray} }\int r_{1}(n_{1})\overline{d_{2}(n_{2})}r_{3}(n_{3})\overline{H(n)}\,dt\right|\lesssim N_{1}^{-\epsilon_{1}}.\\
\end{aligned}
\end{equation}

We will first introduce an estimate that allows one to reduce to the regime $N_2>N_1^{1-\frac{1}{100}}$. To see this, following the same Cauchy-Schwarz argument as in (\ref{eq: g3}), (\ref{eq: g4}), one obtains, after dropping a set of probability $e^{-N_1^c}$,
\begin{equation}
\begin{split}
& N_{1}^{s_{0}}\left|\sum_{
\begin{subarray}
\quad |n_{i}|\sim N_{i}, \,n_{1}-n_{2}+n_{3}-n=0\\
n_{1}^{2}-n_{2}^{2}+n_{3}^{2}-n^{2}=O(M)
\end{subarray} }\int r_{1}(n_{1})\overline{d_{2}(n_{2})}r_{3}(n_{3})\overline{H(n)}\,dt\right|\\
\lesssim &N_1^{Cs_0}N_1^{-1}N_3^{-1}N_2^{1/2}N_3^{1/2}\left(\sup_{n_2} \#\{n_1,n_3:\, n_2\neq n_1,n_3,\, \langle n_3-n_1, n_3-n_2\rangle=O(M)\}\right)^{1/2}\\
\lesssim & N_1^{Cs_0}N_1^{-1}N_3^{-1}N_2^{1/2}N_3^{1/2}N_1^{1/2}N_3^{1/2}\lesssim N_1^{Cs_0}N_1^{-1/2}N_2^{1/2},
\end{split}
\end{equation}where in the last two steps we have applied Lemma \ref{lem: count strong}. Hence, if $N_2\leq N_1^{1-\frac{1}{100}}$, the desired estimate follows.

Next, we may apply the same estimate as in Case (c) to deal with the case $N_3\leq N_1^{1/5}$. Indeed, introducing an extra factor of $N_3^{s_0}$ (since the third input function in the current case is random) to (\ref{eq: c9}), one has
\begin{equation}
N_{1}^{s_{0}}\left|\sum_{
\begin{subarray}
\quad |n_{i}|\sim N_{i}, \,n_{1}-n_{2}+n_{3}-n=0\\
n_{1}^{2}-n_{2}^{2}+n_{3}^{2}-n^{2}=O(M)
\end{subarray} }\int r_{1}(n_{1})\overline{d_{2}(n_{2})}r_{3}(n_{3})\overline{H(n)}\,dt\right|\lesssim N_1^{Cs_0}N_1^{-1/2}N_2^{1/4}N_3,
\end{equation}which is enough to handle the case $N_3\leq N_1^{1/5}$.

To summarize, we have reduced the desired estimate to the regime $N_2>N_1^{1-\frac{1}{100}}$ and $N_3>N_1^{1/5}$. The rest of the argument is essentially repeating that of Case (e). Define
\[
\sigma(n,n_2)= 
\sum_{\begin{subarray}
\quad n_{1}-n_{2}+n_{3}=n, \,n_{2}\neq n_{1},n_{3}\\
n_{1}^{2}-n_{2}^{2}+n_{3}^{2}-n_{4}^{2}=O(M)
\end{subarray}} \frac{g_{n_1}(\omega)g_{n_3}(\omega)}{N_1N_3}.
\]Then, one has
\begin{equation}
\begin{split}
&N_{1}^{s_{0}}\left|\sum_{
\begin{subarray}
\quad |n_{i}|\sim N_{i}, \,n_{1}-n_{2}+n_{3}-n=0\\
n_{1}^{2}-n_{2}^{2}+n_{3}^{2}-n^{2}=O(M)
\end{subarray} }\int r_{1}(n_{1})\overline{d_{2}(n_{2})}r_{3}(n_{3})\overline{H(n)}\,dt\right|\\
\lesssim &N_1^{Cs_0}\left[\max_{n}\sum_{n_2} |\sigma(n,n_2)|^2 + \left(\sum_{n\neq n'}\Big| \sum_{n_2} \sigma(n,n_2)\overline{\sigma(n',n_2)}\Big|^2\right)^{1/2}\right]^{1/2}.
\end{split}
\end{equation}

Again, the diagonal term is easier to deal with. After dropping a set of small probability and applying Lemma \ref{lem: count strong}, one has
\begin{equation}
\max_{n}\sum_{n_2} |\sigma(n,n_2)|^2\lesssim N_1^{Cs_0}N_1^{-2}N_3^{-2}N_2N_3\lesssim N_1^{-1+Cs_0}.
\end{equation}

For the non-diagonal term, rewrite
\begin{equation}\label{eq: ij3}
\left(\sum_{n\neq n'}\Big| \sum_{n_2} \sigma(n,n_2)\overline{\sigma(n',n_2)}\Big|^2\right)^{1/2}= (N_1N_3)^{-2}\left(\sum_{n\neq n'}\Big| \sum_{(\ast)''} g_{n_1}g_{n_3}\overline{g_{n_1'}}\overline{g_{n_3'}}   \Big|^2\right)^{1/2},
\end{equation}where $(\ast'')$ denotes the set of $(n_1,n_2,n_3,n_1',n_3')$ satisfying
\begin{equation}\label{ast''}
\begin{cases}
n=n_1-n_2+n_3,\, n_2\neq n_1,n_3,\, \langle n-n_1, n-n_3\rangle=O(M),\\
n'=n'_1-n_2+n'_3,\, n_2\neq n'_1,n'_3,\, \langle n'-n'_1, n'-n'_3\rangle=O(M).
\end{cases}
\end{equation}We discuss three subcases in the following, and omit the symmetric ones. Note that they proceed very similarly as the corresponding cases in Case (e).

\subsubsection{Case I: $n_1,n_3,n_1',n_3'$ are distinct}

By dropping an exceptional set, one obtains 
\begin{equation}
(\ref{eq: ij3})_1\lesssim N_1^{Cs_0}(N_1N_3)^{-2}(\#S)^{1/2},
\end{equation}where $S$ denotes the set $(n_1,n_2,n_3,n_1',n_3')$ satisfying
\begin{equation}\label{eq: ij4}
\begin{cases}
n_2\neq n_1,n_3,n_1',n_3',\\
\langle n_2-n_1, n_2-n_3\rangle=O(M), \quad  \langle n_2-n_1', n_2-n_3'\rangle=O(M).
\end{cases}
\end{equation}Counting $n_2$ trivially first, then applying Lemma \ref{lem: count strong} twice, one has
\[
\#S\lesssim N_1^{Cs_0}N_2^2(N_1N_3)^2,
\]which implies
\begin{equation}
(\ref{eq: ij3})_1\lesssim N_1^{Cs_0}N_1^{-1}N_2N_3^{-1}\lesssim N_1^{-1/5+Cs_0}.
\end{equation}

\subsubsection{Case II: $n_1=n_1'$ ($n_3\neq n_3'$)}

Similarly as in Case II of Case (e), one obtains
\begin{equation}
(\ref{eq: ij3})_2\lesssim N_1^{Cs_0}(N_1N_3)^{-2}\left(\#S(n,n',n_3,n_3')\right)^{1/2}\left(\#S\right)^{1/2},
\end{equation}where remembering the definition of $(\ast'')$ in \eqref{ast''}
\[
\#S(n,n',n_3,n_3')=\#\{n_1,n_2:\, (n_1,n_2,n_3, n_1,n_3') \text{ satisfies } (\ast'')\}\lesssim N_1^{Cs_0}N_1
\]by Lemma \ref{lem: linecounting}, and $S$ denotes the set of $(n_1,n_2, n_3,n_3')$ satisfying (\ref{eq: ij4}). Hence, by counting $n_2$ trivially, then counting $n_1, n_3$ via Lemma \ref{lem: count strong}, and finally counting $n_3'$ according to Lemma \ref{lem: linecounting}, one obtains
\[
\#S\lesssim N_1^{Cs_0}N_2N_1N_3N_3.
\]Combining together, one has
\begin{equation}
(\ref{eq: ij3})_2\lesssim N_1^{Cs_0}(N_1N_3)^{-2}N_1^{1/2}N_1^{1/2}N_2^{1/2}N_3\lesssim N_1^{Cs_0}N_1^{-1}N_2^{1/2}N_3^{-1}\lesssim N_1^{-1/2+Cs_0}.
\end{equation}

\subsubsection{Case III: $n_1=n_3'$ ($n_3\neq n_1'$)}

Following the same calculation as in Case III of Case (e), one has, up to a small exceptional set,
\begin{equation}
(\ref{eq: ij3})_2\lesssim N_1^{Cs_0}(N_1N_3)^{-2}\left(\#S(n,n',n_3,n_1')\right)^{1/2}\left(\#S\right)^{1/2},
\end{equation}where
\[
\#S(n,n',n_3,n_1')=\#\{n_1,n_2:\, (n_1,n_2,n_3,n_1',n_1) \text{ satisfies } (\ast'')\}\lesssim N_1^{Cs_0}N_1
\]according to Lemma \ref{lem: linecounting}. In the above, $S$ consists of $(n_1,n_2,n_3,n_1')$ so that $(n_1,n_2,n_3,n_1',n_1)$ satisfies (\ref{eq: ij4}). One thus has
\[
\#S\lesssim N_3^2 (N_1^{Cs_0+1}N_2)N_1
\]by Lemma \ref{lem: count strong} and \ref{lem: linecounting} similarly as before. Therefore,
\begin{equation}
(\ref{eq: ij3})_2\lesssim N_1^{Cs_0}(N_1N_3)^{-2}N_1^{1/2}N_1N_2^{1/2}N_3\lesssim N_1^{Cs_0}N_1^{-1/2}N_2^{1/2}N_3^{-1}\lesssim N_1^{-1/5+Cs_0}.
\end{equation}This concludes the proof of Case (i) and (j).

\subsection{Case (k), (l): $N_1(I)\geq N_2(I)\geq N_3(I)$ or $N_1(I)\geq N_3(I)\geq N_2(I)$}

Considering only the case $N_1\geq N_{1,0}$, where $N_{1,0}^{100}=\delta^{-1}$. Our goal is to show that 
\begin{equation}\label{eq: reducedkl}
N_1^{s_0}\left|\int \NN_{1}(P_{N_{1}}v_{1}, P_{N_{2}}v_{2},P_{N_{3}}v_{3})\bar{h}\phi(t)\right|\lesssim N_{1}^{-\epsilon_{1}}, \text{ for some } \epsilon_{1}\gg \epsilon_{0}
\end{equation}up to an exceptional set of probability $e^{-N_1^c}$.

Let $M=O(N_1^{100s_0})$, by a similar reduction argument as in Case (c), it suffices to prove that 
\begin{equation}\label{eq: mainkl}
N_1^{s_0}\left| \sum_{
\begin{subarray}
\quad n_{i}\sim N_{i}, n_{1}-n_{2}+n_{3}=n,  \, n_{2}\neq n_{1},n_{3}\\
n_{1}^{2}-n_{2}^{2}+n_{3}^{2}-n^{2}=O(M)
\end{subarray}
}
\int r_{1}(n_{1})\overline{r_{2}(n_{2})}r_{3}(n_{3})\overline{H(n)}\,dt\right|\lesssim N_{1}^{-\epsilon_{1}}, \text{ for some } \epsilon_{1}\gg \epsilon_{0}.
\end{equation}

Following the same argument in (\ref{eq: g3}), one has
\begin{equation}\label{eq: kl1}
\begin{split}
& N_1^{s_0}\left| \sum_{
\begin{subarray}
\quad n_{i}\sim N_{i}, n_{1}-n_{2}+n_{3}=n,  \, n_{2}\neq n_{1},n_{3}\\
n_{1}^{2}-n_{2}^{2}+n_{3}^{2}-n^{2}=O(M)
\end{subarray}
}
\int r_{1}(n_{1})\overline{r_{2}(n_{2})}r_{3}(n_{3})\overline{H(n)}\,dt\right|\\
\lesssim &N_1^{s_0}\left( \sum_n\Big\| \sum_{\begin{subarray}
\quad n_{1}-n_{2}+n_{3}=n, \,n_{2}\neq n_{1},n_{3}\\
n_{1}^{2}-n_{2}^{2}+n_{3}^{2}-n_{4}^{2}=O(M)
\end{subarray}} r_1(n_1)\overline{r_2(n_2)}r_3(n_3)  \Big\|_{L_t^2}^2 \right)^{1/2}.
\end{split}
\end{equation}

Suppose $n_1,n_2,n_3$ are all distinct, then by dropping a set of probability $e^{-N_1^c}$ and recalling the presence of Schwartz function $\psi(t)$ in each $r_i(n_i, t)$, one can bound the above by
\begin{equation}
\begin{split}
\lesssim &N_1^{Cs_0} (N_1N_2N_3)^{-1} \left(\#\{n_1\neq n_2\neq n_3:\, \langle n_2-n_1, n_2-n_3\rangle=O(M)\} \right)^{1/2}\\
\lesssim & N_1^{Cs_0} (N_1N_2N_3)^{-1} (N_2^2N_3^2N_1)^{1/2}\lesssim N_1^{-1/2+Cs_0},
\end{split}
\end{equation}where in the second line above we trivially counted $n_2, n_3$ and applied Lemma \ref{lem: linecounting} to count $n_1$.

Now, suppose $n_1=n_3$. For fixed $n$, one thus has $n=2n_3-n_2$ and $|n_3-n_2|^2=O(M)$, hence $|n_3-n|^2=O(M)$. By losing a factor of $N_1^{Cs_0}$, we can assume that $|n_3-n|^2=\mu+O(1)$, where $\mu\lesssim N_1^{Cs_0}$, in other words, $n_3$ lies in an annulus of radius $\sim R$ and thickness $\sim O(\frac{1}{R})$, with $R\lesssim N_1^{Cs_0}$. Applying Lemma \ref{lem: circlepart}, one sees that the total number of such $n_3$ is at most $\lesssim R^{2/3}\lesssim N_1^{Cs_0}$. 

Therefore, by Cauchy-Schwarz, outside an exceptional set of probability $e^{-N_1^c}$, one has
\begin{equation}
\begin{split}
(\ref{eq: kl1})\lesssim &N_1^{Cs_0}\left(\#\{n_3:\, \text{fixing }n\} \right)^{1/2}(N_1N_2N_3)^{-1}\left( \#\{n_2,n_3\}\right)^{1/2}\\
\lesssim &N_1^{Cs_0}(N_1N_2N_3)^{-1} N_2N_3\lesssim N_1^{-1+Cs_0}
\end{split}
\end{equation}by trivially counting $n_2$, $n_3$.

The proof of Case (k) and (l) is hence complete, so is the proof of Proposition \ref{prop: onemore}.

\appendix
\section{Time localization of  $X^{s,b}$}\label{asectimeloc}
In this section, we summarize several standard time localization facts for the $X^{s,b}$ space, and also briefly recall the proof of  Lemma \ref{lem: smallpower}. The presentation mainly follows that from \cite{caffarelli1999hyperbolic}.
Here $\phi$ is a fixed time cut off function.
There are several basic facts about   the $X^{s,b}$ space that we can recall below.  We have 
\begin{equation}
\|\phi(t/\delta)u\|_{X^{s,b}}\lesssim_{b} \|u\|_{X^{s,b}}, \quad 0< b<\frac{1}{2}
\end{equation}
\begin{equation}
\|\phi(t/\delta)u\|_{X^{s,b}}\lesssim_{b} \delta^{\frac{1-2b}{2}}\|u\|_{X^{s,b}}  \quad \frac{1}{2}<b<1.
\end{equation}
Also, Hausdorff-Young inequality gives the following estimate which is useful in the interpolation
\begin{equation}\label{eq: hy}
\|\phi(t)u\|_{L_{t,x}^{4}}\lesssim_{\epsilon} \|u\|_{X^{1/2,\frac{1}{4}+\epsilon}},
\end{equation}
which can be compared to  estimates (95), (96) on page 26 of \cite{caffarelli1999hyperbolic}.

In what follows, one should think $1\gg s_{p}\gg \epsilon>0$.  We will only do proof for \eqref{eqq: xsbstrloc} in Lemma \ref{lem: smallpower}.

 Via Strichartz estimate and interpolation of Hausdorff Young inequality, one can obtain

\begin{equation}
\|\phi(t)u\|_{L_{t,x}^{4}}\lesssim_{\epsilon}\|u\|_{X^{3\epsilon,\frac{1}{2}-\epsilon}}
\end{equation}
(One may change the $3$ in the above to any number larger than $2$.)
Similarly, for $p>4$, one can obtain
\begin{equation}
\|\phi(t)u\|_{L_{t,x}^{p}}\lesssim_{\epsilon}\|u\|_{X^{s_{p}+10\epsilon,\epsilon}}
\end{equation}
There are the  following two H\"older inequalities,
\begin{enumerate}
\item 
\begin{equation}
\|\phi(t/\delta)u\|_{L_{t,x}^{4}}\leq \|\phi(t/\delta)u\|^{\theta_{p}}_{L_{t,x}^{2}}\|\phi(t)u\|^{1-\theta_{p}}_{L^{	p}_{t,x}},
\end{equation}
where one has $\frac{1}{4}=\frac{\theta_{p}}{2}+\frac{1-\theta_{p}}{p}$. $\theta_{p}=\frac{s_{p}}{1+s_{p}}\geq\frac{1}{2}s_{p}$
\item 
\begin{equation}
\|\phi(t/\delta)u\|_{L_{t,x}^{2}}\leq \delta^{1/4}\|\phi(t)u\|_{L_{t,x}^{4}}
\end{equation}
\end{enumerate}

One derives

\begin{equation}
\begin{aligned}
&\|\phi(t/\delta) u\|_{L_{t,x}^{4}}\\
\leq &\|\phi(t\delta)u\|_{L_{t,x}^{2}}^{\theta_{p}}\|\phi(t)u\|_{L_{t,x}^{p}}^{1-\theta_{p}}\\
\leq &\|\phi(t\delta)u\|_{L_{t,x}^{4}}^{\theta_{p}}\delta^{\theta_{p}/4}\|\phi(t)u\|_{L_{t,x}^{p}}^{1-\theta_{p}}\\
\lesssim_{\epsilon}& \|u\|_{X^{3\epsilon,\frac{1}{2}-\epsilon}}^{\theta_{p}}\|u\|^{1-\theta_{p}}_{X^{s_{p}+10\epsilon,\frac{1}{2}-\epsilon}}\delta^{\frac{\theta_{p}}{4}}
\end{aligned}
\end{equation}
Note that
\begin{equation}
\theta_{p}=\frac{s_{p}}{1+s_{p}}\geq\frac{1}{2}s_{p}.
\end{equation}

Thus, to summarize,
when $s\ll 1$, and $\epsilon\ll s$, one has
\begin{equation}
\|\phi(t/\delta) u\|_{L_{t,x}^{4}}\lesssim_{\epsilon} \|u\|_{X^{s+10\epsilon,\frac{1}{2}-\epsilon}}\delta^{\frac{s}{4}},
\end{equation}

which, for convenience, can be written as 
\begin{equation}
\|\phi(t/\delta)u\|_{L_{t,x}^{4}}\lesssim_{\epsilon}\|u\|_{X^{s,\frac{1}{2}-\epsilon}}\delta^{s/8}.
\end{equation}

Localizing at frequency $N$, this gives Lemma \ref{lem: smallpower} for balls $B$  of radius $N$, which are centered at origin point. To prove general $B$ centered at $n_{0}$, one simply observes
\begin{equation}
\sum_{n\in B}a_{n}e^{inx}e^{in^{2}t}e^{i\lambda t}=\sum_{|n-n_{0}|\leq N}a_{n}e^{i(n-n_{0})(x+2n_{0})}e^{i(n-n_{0})^{2}t}e^{i\lambda t}e^{in_{0}x}e^{-in_{0}^{2}t}
\end{equation}
and the $L_{t,x}^{4}$ norm of a function is invariant under multiplying $e^{in_{0}x}e^{-in_{0}^{2}t}$ and doing space translation in $x$ variable by $n_{0}$.
This ends the proof.

\section{Proof of Lemma \ref{lem: deter}, \ref{lem: onemoredeter}, \ref{lemma: N2}}\label{asecdeter}
We briefly sketch the proof of those three Lemmata here. 

We start with Lemma \ref{lemma: N2}. Let $h(n,t), f_{i}(n,t)$ be space Fourier transform of $h,f_{i}$, and we will also 
short handedly write  them as $h(n), f_{i}(n)$. We  only prove 
\begin{equation}\label{eq: 1a}
\left|\int \phi(t/\delta)h\NN_{2}(P_{N_{1}}f_{1}, P_{N_{1}}f_{2}, P_{N_{1}}f_{3})\right|\lesssim (\delta^{1/4}\|P_{N_{1}}h\|_{X_{0,1-b_{0}}}\|f_{1}\|_{X^{0,b_{0}}}\sup_{|n|\sim N_{1}}\Pi_{j\neq 1}\|f_{j}(n)e^{inx}\|_{X_{0,b_{0}}}).
\end{equation} 
To see this, observe
\begin{equation}\label{eq: 1b}
\begin{aligned}
&\left|\int \phi(t/\delta)h\NN_{2}(P_{N_{1}}f_{1}, P_{N_{1}}f_{2}, P_{N_{1}}f_{3})\right|=\left|\sum_{|n|\sim N_{1}}\int \phi(t/\delta)\bar{h}(n)f_{1}(n)\bar{f}_{2}(n)f_{3}(n)\right|\\
\lesssim &\|\phi(t/\delta)\|_{L_{t}^{2}}\|\phi(t)h(n)\|_{L_{t}^{2}}\|\phi(t)f_{1}\|_{L_{t}^{\infty}}\sup_{|n|\sim N_{1}}\Pi_{j\neq 1}\|f_{j}\|_{L^{\infty}_{t}}.
\end{aligned}
\end{equation}

Now we have, (note that one only has one mode in all the estimates below)
\begin{equation}
\|\phi(t)h(n)\|_{L_{t}^{2}}\lesssim \|h(n)e^{inx}\|_{X^{0,1-b_{0}}}, \|f_{i}(n)\|_{L_{t}^{\infty}}\lesssim \|f_{i}(n)e^{inx}\|_{X^{0,b_{0}}}
\end{equation}
then, \eqref{eq: 1a} will follow from \eqref{eq: 1b} by Cauchy Schwarz.

We turn to Lemma \ref{lem: deter}.  We start with \eqref{deter1} to \eqref{deter1.2}.
Estimates \eqref{deter1}, \eqref{deter1.1} follows from \eqref{eq: xsbstr}, \eqref{eqq: xsbstrloc} via H\"older inequality. We point out that the  naive loss will be $N_{1}^{C\epsilon}$ rather than $\max(N_{2},N_{3})^{C\epsilon}$, but this can be handled by a standard $L^{2}$ orthogonality argument, See, for example,\cite{caffarelli1999hyperbolic}, \cite{bourgain1993fourier} for more details.
Now we show  how to derive \eqref{deter1.2} from \eqref{deter1.1}. We shall see that \eqref{deter2} can be derived similarly form \eqref{deter1}.

Recall that we used the notation 
\begin{equation}
f_{i}(x,t)=\sum_{n}f_{i}(n,t)e^{inx}, \quad i=1,2.3
\end{equation}
i.e. $f_{i}(n,t)$ is the space Fourier transform. For the sake of convenience, we denote $f_{i}(n,t)$ with  $f_{i}(n)
$.
Similarly, we wrtie $h=\sum h(n,t)e^{inx}$.

Given \eqref{deter1.1}, in order to derive \eqref{deter1.2}, we need to further prove
\begin{itemize}
\item If $N_{1}\sim N_{2}$
\begin{equation}\label{11}
\sum_{n_{1}\sim |N_{1}|, n_{3}\sim N_{3}}\left|\int \phi(t/\delta\bar{h}(n_{1})f_{1}(n_{1})\bar{f}_{2}(n_{3})f_{3}(n_{3})\right|\lesssim  \delta^{1/10}(\max(N_{2},N_{3}))^{C\epsilon_{0}}\|h\|_{X^{0,1-b_{0}}}\prod_{i}\|P_{N_{i}}f_{i}\|_{X^{0,b_{0}}}
\end{equation}
\item If $N_{2}\sim N_{3}$
\begin{equation}\label{22}
\sum_{n_{1}\sim N_{1}, n_{2}\sim N_{2}}\left|\int \phi(t/\delta)\bar{h}(n_{1})f_{1}(n_{1})\bar{f}_{2}(n_{2})f_{3}(n_{2})\right|\lesssim \delta^{1/10}(\max(N_{2},N_{3}))^{C\epsilon_{0}}\|h\|_{X^{0,1-b_{0}}}\prod_{i}\|P_{N_{i}}f_{i}\|_{X^{0,b_{0}}}
\end{equation}
\item If $N_{1}\sim N_{2}\sim N_{3}$,
\begin{equation}\label{33}
\int |\phi(t/\delta)h\NN_{2}(P_{N_{1}}f_{1}, P_{N_{1}}f_{2}, P_{N_{1}}f_{3})|\lesssim \delta^{1/10}(\max(N_{2},N_{3}))^{C\epsilon_{0}}\|h\|_{X_{0,1-b_{0}}}\prod_{i}\|P_{N_{i}}f_{i}\|_{X^{0,b_{0}}}
\end{equation}
\end{itemize}
Estimate \eqref{33} follows from Lemma \ref{lemma: N2}. The proof of estimates \eqref{11} and \eqref{22} are similar, and we only work on \eqref{11}.
Note that the integration on the  left side is only in $t$. One has, (by Sobolev embedding in the $t$ variable if necessary), that
\begin{equation}
\begin{aligned}
&\|h(n_{1})\|_{L_{t}^{2}}\lesssim \|h(n_{1})e^{in_{1}x}\|_{X^{0,1-b_{0}}},\\
&\|f(n_{1})\|_{L_{t}^{\infty}}\lesssim \|f(n_{1})e^{in_{1}x}\|_{X^{0,b_{0}}},\\
&\|f_{i}(n_{3})\|_{L_{t}^{\infty}}\lesssim \|f_{i}(n_{3})e^{in_{3}x}\|_{X^{0,b_{0}}}.
\end{aligned}
\end{equation} 
Then the desired estimates follow from  a H\"older inequality in $t$ and Cauchy Schwarz inequality in $n_{1},n_{3}$.
Estimates \eqref{deter3}, \eqref{deter4}, \eqref{deter4.5} are of similar flavor. We prove \eqref{deter3} and leave the rest to the interested readers.
Estimate \eqref{deter3} follows from the following  four estimates.
\begin{itemize}
\item 
\begin{equation}\label{44}
\left|\int \psi(t)\bar{h} P_{N_{1}}f_{1}P_{N_{2}}\bar{f}_{2}P_{N_{3}}f_{3}\right|\lesssim (\max(N_{2},N_{3}))^{C\epsilon_{0}}|h\|_{X^{0,1/3}}\|\sup_{J}\|P_{J}f_{1}\|_{L_{t,x}^\infty}\|f_{2}\|_{X^{0,1/3}}\|f_{3}\|_{X^{0,1/3}},
\end{equation}
\item If $N_{1}\sim N_{2}$,
\begin{equation}\label{55}
\sum_{n_{1}\sim |N_{1}|, n_{3}\sim N_{3}}\left|\int \psi(t)\overline{h(n_{1})}f_{1}(n_{1})\bar{f}_{2}(n_{3})f_{3}(n_{3})\right|\lesssim 
\|P_{N_{1}}f_{1}\|_{X^{0,b_{0}}}\|P_{N_{1}}f_{2}\|_{X^{0,1/3}}\|P_{N_{3}}f_{3}\|_{X^{0,1/3}}\|P_{N_{3}}h\|_{X^{0,1/3}},
\end{equation}
\item If $N_{2}\sim N_{3}$,
\begin{equation}\label{66}
\sum_{n_{1}\sim N_{1}, n_{2}\sim N_{2}}\left|\int \psi(t)\overline{{h}(n_{1})}f_{1}(n_{1})\bar{f}_{2}(n_{2})f_{3}(n_{3})\right|\lesssim
\|P_{N_{1}}f\|_{X^{0,b_{0}}}\|P_{N_{2}}f_{2}\|_{X^{0,1/3}}\|P_{N_{2}}f_{3}\|_{X^{0,1/3}}\|P_{N_{1}}h\|_{X^{0,1/3}}
\end{equation}
\item If $N_{1}\sim N_{2}\sim N_{3}$
\begin{equation}\label{77}
\left|\int \psi(t)\bar{h}\NN_{2}(P_{N_{1}}f_{1}P_{N_{2}}\bar{f}_{2}P_{N_{3}}f_{3})\right|\lesssim \min(\|P_{N_{1}}h\|_{X^{0,1-b_{0}}}\|f_{i}\|_{X^{0,b_{0}}}\sup_{|n|\sim N_{1}}\prod_{j\neq i}\|f_{j}(n)e^{inx}\|_{X^{0,b_{0}}}
\end{equation}
\end{itemize}
Again estimate \eqref{77} follows from Lemma \ref{lemma: N2}. We will only prove estimate \eqref{44}, \eqref{55}. The proof of \eqref{66} is similar to that for \eqref{55}.

We start with  \eqref{44}. We may only consider the case $N_{2}\geq N_{3}$, as the case $N_{2}\leq N_{3}$ can be proved similarly.

We may further only consider the  case $N_{1}\gg N_{2}$, otherwise one may replace  $P_{J}$ by $P_{<N_{1}}$.
Observe that  (using $L^{2}$ orthogonality),
\begin{equation}
\int \psi(t)\bar{h} P_{N_{1}}f_{1}P_{N_{2}}\bar{f}_{2}P_{N_{3}}f_{3}\\
=
 \sum_{J} \int \psi(t)\bar{h} P_{J}f_{1}P_{N_{2}}\bar{f}_{2}P_{N_{3}}f_{3}\\
=
\sum_{J}\int \psi(t)P_{J}\bar{h}P_{J}f_{1}P_{N_{2}}\bar{f}P_{N_{3}}f_{3}.
\end{equation}

For each $J$, we may estimate as  follows,
\begin{equation}\label{eq: okla}
\left|\int \int \psi(t)P_{J}\bar{h}P_{J}f_{1}P_{N_{2}}\bar{f}P_{N_{3}}f_{3}\right|\lesssim \|\phi(t)P_{J}h\|_{L_{L_{t,x}^{3}}}\|\phi(t)P_{J}f_{1}\|_{L_{t,x}^{\infty}}\|\phi(t)P_{N_{2}}f_{2}\|_{L_{t,x}^{3}}\|\phi(t)P_{N_{3}}f_{3}\|_{L_{t,x}^{3}},
\end{equation}
where without loss of generality assumed $\psi(t)=\phi(t)^{4}$ for some  well localized $\phi(t)$.

Using Estimate \eqref{eq: interpo} to control the $L^{3}$ norm in  \eqref{eq: okla} and applying a Cauchy Schwarz in $J$,  the desired estimate then follows.

Lemma \ref{lem: onemoredeter} can be proved similarly as Lemma \ref{lem: deter}.

\section{A Cauchy-Schwarz type inequality}
We summarize a (deterministic) Cauchy-Schwarz type  inequality,  that is often used  in  random data type problems.
For simplicity, let $a_{ij}, b_{j}$ be real numbers, assume that 
\begin{equation}
\sum_{j}b_{j}^{2}\lesssim 1,
\end{equation}
which of course implies
\begin{equation}
\sum_{j,j'}b_{j}^{2}b_{j'}^{2}\lesssim 1.
\end{equation}
Then, we have
\begin{equation}
\begin{aligned}
&\sum_{i}|\sum_{j}a_{ij}b_{j}|^{2}
=\sum_{i}\sum_{j,j'}a_{ij}a_{ij'}b_{j}b_{j'}
=\sum_{i}\sum_{j}a_{ij}a_{ij}b_{j}^{2}
+\sum_{i}\sum_{j\neq j'}a_{ij}a_{ij'}b_{j}b_{j'}
\end{aligned}
\end{equation}
Note that 
\begin{equation}
\sum_{i}|\sum_{j}a_{ij}a_{ij}b_{j}|^{2}\lesssim \sup_{j}\sum_{i}a^{2}_{ij} 
\end{equation}
and, by Cauchy inequality,
\begin{equation}
\sum_{i}\sum_{j\neq j'}a_{ij}a_{ij'}b_{j}b_{j'}=\sum_{j\neq j'} b_{j}b_{j'}\sum_{i}a_{ij}a_{ij'}\lesssim (\sum_{j,j'}b^{2}_{j}b_{j'}^{2})^{1/2}\{\sum_{j\neq j'}|\sum_{i}a_{ij}a_{ij}|^{2}\}^{1/2}
\end{equation}
To summarize, and by simple generalization to the complex case, one has
\begin{lem}\label{lem: matrixtrick1}
Assume $\sum_{j}|b_{j}|^{2}\lesssim 1$, then
\begin{equation}
\begin{aligned}
\sum_{i}|\sum_{j}a_{ij}b_{j}|^{2}\lesssim \max_{j}\sum_{i} |a_{ij}|^{2}+\left(\sum_{j\neq j'}|\sum_{i}a_{ij}\bar{a}_{ij'}|^{2}\right)^{1/2}
\end{aligned}
\end{equation}
\end{lem}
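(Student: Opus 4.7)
The plan is to mimic the real-valued computation already sketched just before the statement, expanding the square and separating diagonal from off-diagonal contributions. Specifically, first I would write
\[
\sum_{i}\Big|\sum_{j}a_{ij}b_{j}\Big|^{2}=\sum_{i}\sum_{j,j'}a_{ij}\overline{a_{ij'}}\,b_{j}\overline{b_{j'}}
=\sum_{i}\sum_{j}|a_{ij}|^{2}|b_{j}|^{2}+\sum_{j\neq j'}b_{j}\overline{b_{j'}}\sum_{i}a_{ij}\overline{a_{ij'}},
\]
swapping the order of summation in the second piece, which is legitimate since we may harmlessly truncate to finite sums and then pass to the limit.

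For the diagonal piece, I would bound it crudely by pulling out the worst column:
\[
\sum_{i}\sum_{j}|a_{ij}|^{2}|b_{j}|^{2}\leq \Big(\max_{j}\sum_{i}|a_{ij}|^{2}\Big)\sum_{j}|b_{j}|^{2}\lesssim \max_{j}\sum_{i}|a_{ij}|^{2},
\]
using the hypothesis $\sum_{j}|b_{j}|^{2}\lesssim 1$. This matches the first term on the right-hand side of the lemma.

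For the off-diagonal piece, the idea is to think of the double index $(j,j')$ as a single index and apply Cauchy--Schwarz in it, pairing the $b$-factors against the inner sum in $i$:
\[
\Big|\sum_{j\neq j'}b_{j}\overline{b_{j'}}\sum_{i}a_{ij}\overline{a_{ij'}}\Big|
\leq \Big(\sum_{j\neq j'}|b_{j}|^{2}|b_{j'}|^{2}\Big)^{1/2}\Big(\sum_{j\neq j'}\Big|\sum_{i}a_{ij}\overline{a_{ij'}}\Big|^{2}\Big)^{1/2}.
\]
The first factor is bounded by $\bigl(\sum_{j}|b_{j}|^{2}\bigr)\lesssim 1$, and the second factor is precisely the off-diagonal expression appearing in the lemma. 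Combining the two contributions yields the stated inequality.

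There is no real obstacle: the statement is essentially a Cauchy--Schwarz/TT$^\ast$ computation, and the only observation worth flagging is that the complex case requires no modification beyond inserting conjugates in the expansion. The probabilistic usefulness of the bound lies in the fact that, in the applications of Sections \ref{sec: ProofPartIII}--\ref{sec: ProofpartIV}, the $a_{ij}$ are sums of (products of) independent Gaussians so that $\sum_{i}a_{ij}\overline{a_{ij'}}$ exhibits square-root cancellation across the off-diagonal indices, but that plays no role in the proof of the deterministic inequality itself.
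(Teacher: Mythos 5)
Your proof is correct and follows essentially the same route as the paper's: expand the square, bound the diagonal part by $\bigl(\max_{j}\sum_{i}|a_{ij}|^{2}\bigr)\sum_{j}|b_{j}|^{2}$, and apply Cauchy--Schwarz in the pair $(j,j')$ to the off-diagonal part. No issues.
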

One can also easily write down,   via the dual estimate, 
\begin{lem}\label{lem: matrixtrick2}
Assume $\sum_{j}|b_{j}|^{2}\lesssim 1$, then 
\begin{equation}\label{eq: m2}
\begin{aligned}
\sum_{i}|\sum_{j}a_{ij}b_{j}|^{2}\lesssim \max_{i}\sum_{j} |a_{ij}|^{2}+\left(\sum_{i\neq i'}|\sum_{j}a_{i'j}\bar{a}_{ij}|^{2}\right)^{1/2}
\end{aligned}
\end{equation}
\end{lem}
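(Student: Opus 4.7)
The plan is to derive Lemma \ref{lem: matrixtrick2} from Lemma \ref{lem: matrixtrick1} by a short duality argument. First I would view the map $b \mapsto \bigl(\sum_j a_{ij} b_j\bigr)_i$ as a linear operator $T$ on $\ell^2$; the left-hand side of the lemma is then $\|Tb\|_{\ell^2}^2$, which under the assumption $\sum_j |b_j|^2 \lesssim 1$ is dominated by the square of the operator norm $\|T\|_{\ell^2 \to \ell^2}^2$. Since $\|T\|_{\ell^2 \to \ell^2} = \|T^*\|_{\ell^2 \to \ell^2}$, it suffices to bound the latter.

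Next I would apply Lemma \ref{lem: matrixtrick1} directly to the adjoint $T^*$, whose matrix entries $\alpha_{ji} := \bar{a}_{ij}$ carry row index $j$ and column index $i$. For any $c$ with $\sum_i |c_i|^2 \lesssim 1$ the lemma produces
\[
\sum_j \Bigl|\sum_i \alpha_{ji} c_i\Bigr|^2 \lesssim \max_i \sum_j |\alpha_{ji}|^2 + \Bigl(\sum_{i \neq i'} \Bigl|\sum_j \alpha_{ji}\bar{\alpha}_{ji'}\Bigr|^2\Bigr)^{1/2}.
\]
Translating back via $|\alpha_{ji}| = |a_{ij}|$ and $\sum_j \alpha_{ji}\bar{\alpha}_{ji'} = \sum_j \bar{a}_{ij} a_{i'j}$, whose modulus equals $\bigl|\sum_j a_{i'j}\bar{a}_{ij}\bigr|$, yields exactly the right-hand side of the statement. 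This shows that $\|T^*\|_{\ell^2 \to \ell^2}^2$, and hence $\|T\|_{\ell^2 \to \ell^2}^2$, is controlled by the claimed quantity.

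I do not anticipate any genuine obstacle here: once the inequality is recognized as the dual of Lemma \ref{lem: matrixtrick1}, the only remaining work is careful bookkeeping of indices and complex conjugates. An alternative that avoids operator-norm language would be to redo the Cauchy--Schwarz computation of Lemma \ref{lem: matrixtrick1} with the roles of the summation index ``$i$'' and the coefficient sequence interchanged, an exercise amounting to the same argument after relabeling.
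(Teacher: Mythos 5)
Your proposal is correct and takes essentially the same route as the paper: the paper states that Lemma \ref{lem: matrixtrick2} follows from Lemma \ref{lem: matrixtrick1} ``via the dual estimate,'' which is precisely the adjoint/duality argument you carry out, and your index and conjugate bookkeeping checks out. The only (harmless) point to note is that Lemma \ref{lem: matrixtrick1} bounds $\|T^*c\|_{\ell^2}^2$ uniformly over all admissible $c$ by a quantity independent of $c$, which is exactly what is needed to pass to the operator norm of $T^*$ and hence of $T$.
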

\bibliographystyle{plain}
\bibliography{BG}
\end{document}